\newtheorem{theorem}{Theorem}[section]
\newtheorem{lemma}[theorem]{Lemma}
\newtheorem{proposition}[theorem]{Proposition}
\newtheorem{remark}[theorem]{Remark}
\newtheorem{definition}[theorem]{Definition}
\theoremstyle{corollary}
\theoremstyle{lemma}
\theoremstyle{proposition}
\theoremstyle{definition}
\theoremstyle{remark}
\theoremstyle{theorem}
\numberwithin{equation}{section}
\def\Xint#1{\mathchoice
	{\XXint\displaystyle\textstyle{#1}}%
	{\XXint\textstyle\scriptstyle{#1}}%
	{\XXint\scriptstyle\scriptscriptstyle{#1}}%
	{\XXint\scriptscriptstyle\scriptscriptstyle{#1}}%
	\!\int}
\def\XXint#1#2#3{{\setbox0=\hbox{$#1{#2#3}{\int}$}
		\vcenter{\hbox{$#2#3$}}\kern-.5\wd0}}
\def\dashint{\Xint-}
\begin{document}
\title[Homogenization of non-convex energies]{Homogenization of non-convex integral energies with Orlicz growth via periodic unfolding}
\author{Joel Fotso Tachago$^{\ddagger }$}
\curraddr{$^{\ddagger }$University of Bamenda, Higher Teachers Training
College, Department of Mathematics, P.O. Box 39, Bambili, Cameroon}
\email{fotsotachago@yahoo.fr}
\author{Guiliano Gargiulo$^{\intercal }$}
\curraddr{$^{\intercal }$Dipartimento di Scienze e Tecnologie (DST)
 University of Sannio, Via dei Mulini (CUBO), Benevento, 82100, Italy }
\email{ggargiul@unisannio.it}
\author{Hubert Nnang$^{\sharp }$}
\curraddr{$^{\sharp }$ University of Yaounde I, \'{E}cole Normale Sup\'{e}%
rieure de Yaound\'{e}, P.O. Box 47 Yaounde, Cameroon.}
\email{hnnang@uy1.uninet.cm}
\author{Elvira Zappale $^{\#}$}
\curraddr{$^{\#}$ Dipartimento di Scienze di Base ed Applicate per l'Ingegneria, Sapienza-Universit\'a di Roma, Via A. Scarpa, 16, 00161, Rome, Italy}
\email{elvira.zappale@uniroma1.it}

\date{June, 2024}
\maketitle

\begin{abstract}
The	periodic unfolding method is extended to the Orlicz setting and used to prove a homogenization result for non-convex integral energies defined on vector-valued configurations in the Orlicz-Sobolev setting.

\noindent{\bf Keywords:} homogenization, periodic unfolding method, Orlicz spaces, two-scale convergence.

\noindent{\bf AMS subject classifications:} 49J45, 35B27, 74Q05
\end{abstract}

\section{Introduction\label{sec1}}

Homogenization of periodic structures via two-scale convergence was introduced in \cite{All1,ngu0} to deal with many problems formulated in terms of partial differential equations and integral functionals in Lebesgue and Sobolev spaces and $BV$ ones (see \cite{A, bbs, cdmsz1, cdmsz2, FF1, FF2}. Indeed the importance of detecting the overall behaviour of a material which might include periodically distributed heterogeneties is very important in many applications, from nonlinear elasticity, mean-field games, to micro and ferromagnetic, conductivity, evolutions problems, polycrystals, discrete models, etc. Indeed we refer to \cite{AMMZ, BMZ, BD, DDI, FGY, FFV1, FFV2, RBCM} among a much wider bibliography.
 
Quite recently the two-scale theory has been extended to the Orlicz setting (see \cite{fotso nnang 2012}), and applied in many
types of problems (see \cite{fotso23,FNZIMSE,FNZOpuscula,FTGNZ,kenne Nnang},  since in many contexts, polynomial growth cannot be considered. 

On the other hand the theory of two-scale convergence has been rephrased by \cite{CDG1, CDG2, CDG3} in terms of periodic unfolding, we refer to \cite{CDG4} for a complete theory in the Sobolev setting. However not many results dealing with integral functionals have been obtained in this context: we refer to \cite{fotso nnang 2012, Elvira 1, V, bbs}, among a wider bibliography, and, in most of these papers explicit homogenized limiting energies have not been computed if the original model is not convex, the first result in the non-convex setting is the one obtained in \cite{ciora1}, on the other hand quasiconvexity in the sense of Morrey have been imposed therein.

Now we aim at extending this latter result to the framework of Orlicz-Soboelv spaces and at a complete removal of any convexity or quasiconveity assumption on the integrand $f$ considered below.
\color{black}
Namely, we are interested in the asymptotic behaviour of
nonconvex integral functionals modeling homogenization problems in the Orlicz setting. To be precise let $N, d \in \mathbb N$. 
Let $Y:=\left] 0,1\right[^N$ and $\mathcal{A}_{0}$ the class of all
bounded open subsets of $
\mathbb{R}
^{N}$ with Lipschitz boundary. For each $\Omega \in \mathcal{A}_{0}$ and
every $\varepsilon \in [ 0,+\infty)$,  converging to $%
0,$ we consider the family of
functionals: 
\begin{equation}\label{funct}
u\in W^{1}L^B(\Omega ;
\mathbb{R}^d) \mapsto \int_{\Omega }f\left( \frac{x}{\varepsilon},\nabla
u\right) dx,
\end{equation} where

\begin{equation}\label{tch1}
\begin{tabular}{l}
$f:\left( y,\xi\right) \in 
\mathbb{R}
^N\times 
\mathbb{R}
^{dN}\rightarrow f\left( y,\xi \right) \in [0,+\infty) $ \\ 
$f\left(\cdot ,\xi\right) $ is Lebesgue measurable and $Y-$periodic for every $
\xi \in
\mathbb{R}^{dN}$ \\ 
$f\left( y,\cdot\right) $ is continous for a.e. $y\in 
\mathbb{R}
^{N}$%
\end{tabular}%
\end{equation}
\noindent and verifies the following assumptions: there exist $M>0$ and a $Y-$periodic function $a$ $\in L^{1}_{per}\left( Y\right) $
such that the following growth conditions hold
\begin{equation}\label{tch3}
f\left( y,\xi\right) \leq a\left( y\right) +MB\left( \left\vert \xi\right\vert
\right) \hbox{ for a.e. }y\in 
\mathbb{R}
^{N}\hbox{ and every }\xi\in
\mathbb{R}
^{dN},\, M >0 
\end{equation}%
\begin{equation}\label{tch4}
B\left( \left\vert \xi\right\vert \right) \leq f\left( y,\xi\right) \hbox{ for 
a.e. }y\in
\mathbb{R}
^{N}\hbox{ and every }\xi\in 
\mathbb{R}^{dN}.
\end{equation}

Here we are interested in the overall behaviour of such energies as the parameter $\varepsilon \to 0$, since it allows to replace a sample with finely distributed heterogeneities by a homogeneous material. 

Problems of this type arise in nonlinear elasticity theory, see \cite{ciora1} and the references therein. 
While the theory is completely understood in the standard Sobolev setting, the analysis is not yet complete in the framework of Orlicz spaces, i.e. when the function $f$ satisfies  \eqref{tch3} and \eqref{tch4}.

Due to recent developments of two scale
convergence in the context of Orlicz setting \cite{fotso23,Elvira2,Elvira3} we are able to deduce the asymptotics of the functionals in \eqref{funct}, as $\varepsilon \to 0$, also in the Orlcz-Sobolev spaces. Indeed, our main results read as follows (we refer to Section \ref{secpre} for the adopted notation):

\begin{theorem}\label{main1}
	Let $B$ be a Young function satisfying $\nabla_2$ and $\Delta_2$  conditions, let  $\Omega \in \mathcal{A}_{0}$ and $u\in W^{1}L^B\left( \Omega ;
\mathbb{R}^{dN}\right) $. Let $\{\varepsilon\}$ be a family of positive numbers converging to $0$.
Assume that $f$ satisfies \eqref{tch1}-\eqref{tch4}, then for every sequence $\{\varepsilon_h\}\subset \{\varepsilon\}$
\begin{align*}
\inf \left\{ \underset{h\rightarrow \infty }{\lim \inf }\int_{\Omega
}f\left( \frac{x}{\varepsilon _{h}},\nabla u_{h}\right) dx:\left\{
u_{h}\right\} \subseteq W^{1}L^B\left( \Omega ;
\mathbb{R}
^{dN}\right) ,u_{h}\rightarrow u\text{ in }L^{B}\left( \Omega ;
\mathbb{R}
^{d}\right) \right\}  \\ 
=\inf \left\{ \underset{h\rightarrow \infty }{\lim \sup }\int_{\Omega
}f\left( \frac{x}{\varepsilon _{h}},\nabla u_{h}\right) dx:\left\{
u_{h}\right\} \subseteq W^{1}L^B\left( \Omega ;
\mathbb{R}
^{dN}\right) ,u_{h}\rightarrow u\text{ in }L^{B}\left( \Omega ;%
\mathbb{R}
^{d}\right) \right\}  \\ 
=\int_{\Omega }f_{\hom }\left( \nabla u\right) dx,
\end{align*}
\noindent where,  setting for every $t >0$, 
\begin{equation}\label{ft}
	f_{t}\left( \xi\right) :=\frac{1}{t^{N}}\inf \left\{
\int_{tY}f\left( y,\xi+\nabla v\right) dy:v\in W^1L^B_0\left( tY;
\mathbb{R}
^{d}\right) \right\}, 
\end{equation}
$f_{hom}:\xi\in 
\mathbb R
^{dN} \mapsto$
\begin{equation}\label{fhom}
\lim_{t \to +\infty} f_{t}\left( \xi\right) =
\frac{1}{t^N}\inf \left\{ \int_{tY}f\left( y,\xi+\nabla v\right) dy:v\in
W^1L^B_0\left( tY;
\mathbb{R}
^{d}\right) \right\}.
\end{equation}
\end{theorem}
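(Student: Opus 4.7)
The plan is to establish the theorem via the three steps standard to non-convex periodic homogenization, adapted to the Orlicz--Sobolev framework through the periodic unfolding operator $\mathcal{T}_\varepsilon$ developed earlier in the paper.

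\smallskip

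\emph{Step 1: existence of $f_{\hom}$.} I first verify that $\lim_{t\to\infty}f_t(\xi)$ exists and coincides with $\inf_{t>0}f_t(\xi)$. This is a classical subadditive argument of M\"uller type: the set function $E\mapsto\inf\{\int_E f(y,\xi+\nabla v)\,dy:v\in W^1L^B_0(E;\mathbb{R}^d)\}$ is subadditive on disjoint Lipschitz open sets and invariant under $Y$-translations, while the upper bound \eqref{tch3} supplies the required integrability of the minimum density.

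\smallskip

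\emph{Step 2: $\liminf$ inequality.} Denote by $F'(u,\Omega)$ and $F''(u,\Omega)$ the two infima in the statement. Fix a nearly optimal sequence $u_h\to u$ in $L^B$ for $F'(u,\Omega)$ with finite energy. The coercivity \eqref{tch4} together with $B\in\Delta_2\cap\nabla_2$ yields boundedness of $\{\nabla u_h\}$ in $L^B$, hence, up to a subsequence, $\nabla u_h\rightharpoonup\nabla u$ weakly in $L^B$ and $\mathcal{T}_{\varepsilon_h}(\nabla u_h)\rightharpoonup \nabla u(x)+\nabla_y \hat u(x,y)$ weakly in $L^B(\Omega\times Y;\mathbb{R}^{dN})$, with $\hat u(x,\cdot)$ $Y$-periodic of zero mean. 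A Fonseca--M\"uller blow-up at Lebesgue points $x_0$ of $\nabla u$, combined with a comparison on mesoscopic cubes of side $\varepsilon_h t_h$ (choosing $t_h\to\infty$ and $\varepsilon_h t_h\to 0$) between the local energy and the cell minimum $f_{t_h}(\nabla u(x_0))$, together with the convergence $f_{t_h}\to f_{\hom}$ from Step 1, then yields $\liminf_h\int_\Omega f(x/\varepsilon_h,\nabla u_h)\,dx\ge\int_\Omega f_{\hom}(\nabla u)\,dx$.

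\smallskip

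\emph{Step 3: $\limsup$ inequality.} I first treat the affine case $u(x)=\xi x$. Fix $t>0$ and select a near-minimizer $v_t\in W^1L^B_0(tY;\mathbb{R}^d)$ for the cell problem \eqref{ft}, extended by $tY$-periodicity to $\mathbb{R}^N$. Setting $u_h(x):=\xi x+\varepsilon_h v_t(x/\varepsilon_h)$ yields $u_h\to u$ in $L^B$, and a change of variables combined with the Orlicz Riemann--Lebesgue lemma gives $\int_\Omega f(x/\varepsilon_h,\nabla u_h)\,dx\to|\Omega|f_t(\xi)$; letting $t\to\infty$ and invoking Step 1 produces $F''(u,\Omega)\le|\Omega|f_{\hom}(\xi)$. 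I then extend to piecewise affine $u$ by partitioning $\Omega$ and gluing local recovery sequences through cut-off functions, relying on $B\in\Delta_2$ to control the gluing error in $L^B$, and finally to arbitrary $u\in W^1L^B(\Omega;\mathbb{R}^d)$ by density of piecewise affine maps.

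\smallskip

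\emph{Main obstacle.} The delicate part is Step 2: the non-convexity of $f(y,\cdot)$ forbids direct weak lower semicontinuity of the unfolded integral in the $\nabla v$ variable, so one must proceed through a two-scale blow-up with a careful diagonal choice of the mesoscopic scale $t_h$ against $\varepsilon_h$, maintaining uniform control on Orlicz norms of truncations and boundary modifications of the cell correctors. This is precisely where $B\in\Delta_2\cap\nabla_2$ enters decisively, since it guarantees reflexivity of $L^B$ and the equivalence of modular and norm boundedness that underlie all the compactness and approximation estimates used in matching the local energy to $f_{t_h}(\nabla u(x_0))$.
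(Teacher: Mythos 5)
Your overall architecture (existence of the cell limit, a blow-up lower bound, an affine/piecewise-affine recovery construction for the upper bound) is a legitimate classical route, and it is genuinely different from the paper's: the paper never performs a blow-up. Instead it removes the non-convexity at the outset by invoking the Orlicz--Sobolev relaxation result of Proposition \ref{relMM} to replace $f(x,\cdot)$ by its quasiconvex envelope $Qf(x,\cdot)$, observing that neither the two sequential infima nor $f_{\hom}$ in \eqref{fhom} change under this substitution; with \eqref{tch2} available it then follows the unfolding scheme of \cite{ciora1}: Lemma \ref{ar1} (Castaing selection) and Proposition \ref{ar3} to rewrite $\int_\Omega f_{\hom}(\nabla u)\,dx$ as a limit of infima over $L^{B}_{D_{0y}}(\Omega\times tY;\mathbb R^d)$, Lemma \ref{lem3.2} for the upper bound, and Lemmas \ref{lemma2.8}--\ref{lem3.6} for the lower bound, where the key tools are the unfolding operator $\mathcal T_{1/\nu}$, the estimate \eqref{prop2.8}, and Lemma \ref{lem1}, rather than any mesoscopic cube comparison.

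The problem with your proposal is that its Step 2 is not a proof but a pointer to the very step that is hardest in the Orlicz setting, and you say so yourself in your ``main obstacle'' paragraph. To compare the local energy on a cube of side $\varepsilon_h t_h$ around a point $x_0$ with $f_{t_h}(\nabla u(x_0))$ you must modify $u_h$ near the boundary of that cube so that the modified map minus the affine map $u(x_0)+\nabla u(x_0)(\cdot-x_0)$ lies in $W^1L^B_0$ of the (lattice-aligned, integer-sided) rescaled cube, and the modular error of this cut-off must vanish. This requires, in the Orlicz framework: a De Giorgi--type slicing argument controlling $\int B(|\nabla u_h|)$ on thin layers, control of $\int B(|u_h-\ell_{x_0}|/\delta)$ which in turn needs an a.e.\ approximate $B$-modular differentiability statement for $u\in W^1L^BB$-type spaces (the Orlicz analogue of $L^p$-differentiability), and a treatment of the misalignment between $Q_{\varepsilon_h t_h}(x_0)$ and the $\varepsilon_h$-periodicity lattice; none of these is formulated or proved, and the $\Delta_2\cap\nabla_2$ hypothesis is invoked only as a slogan. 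Note also that the two-scale limit $\nabla u+\nabla_y\hat u$ you extract via Proposition \ref{maintwo} is never used in your blow-up, so it does no work. Your Step 3 is essentially sound as a sketch (modulo restricting to integer $t$ and the density of piecewise affine maps in $W^1L^B$, which should be justified), but without a complete Step 2 the chain $\int_\Omega f_{\hom}(\nabla u)\,dx\le F'\le F''\le\int_\Omega f_{\hom}(\nabla u)\,dx$ is broken, so the argument as written does not establish the theorem. Supplying the missing boundary-matching machinery, or adopting the paper's relaxation-plus-unfolding reduction which avoids it entirely, is necessary.
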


We stress that our set of assumptions allows us to consider growth conditions modeled through $B(\xi):= |\xi|^p$, with $p>1$, hence Theorem \ref{main1} extends \cite[Theorem 2.5]{ciora1} to the case where no quasi-convexity hypothesis is made on $f(x,\cdot)$.

Our  main result follows by the extension to the Orlicz setting of the unfolding
method in homogenization (see \cite{fotso23} for the first results in this direction). Indeed, in subsection \ref{sec:con3}, we get

\begin{proposition}\label{maintwo}
Let $\Omega \in \mathcal{A}_{0}$  and let $B$ be a Young function satisfying $\nabla_2$ and $\Delta_2$ conditions. 
let $\left\{ \varepsilon _{h}\right\} $\ be a sequence of positive numbers
converging to $0.$ Let $\{v_{h}\}_h$ be a sequence weakly converging to some $v$ in $%
W^{1}L^B\left( \Omega \right) $. Then there exist a
subsequence  $h_{k}$ and $V$ $\in L^{B}\left( \Omega \times Y_{per}\right) $
such that  $\frac{\partial }{\partial y_{i}}V\left( x,\cdot \right) \in
L^{B}_{per}\left( Y\right) $ verifying, as $k\rightarrow \infty $ $\mathcal{T}%
_{\varepsilon _{h_{k}}}\left( \nabla v_{h_{k}}\right) \rightharpoonup \nabla
v+\nabla _{y}V$ in $L^{B}\left( \Omega \times Y_{per};%
\mathbb{R}
^{d}\right) .$
\end{proposition}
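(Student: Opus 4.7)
The plan is to transpose the classical Cioranescu--Damlamian--Griso argument for the two-scale unfolding limit of gradients into the Orlicz--Sobolev framework. The three ingredients needed, all available from the preliminary development in Section~\ref{secpre} and from \cite{fotso23}, are: (i) the $L^B$-boundedness and near-isometry of the unfolding operator $\mathcal{T}_\varepsilon$; (ii) reflexivity of $L^B(\Omega\times Y)$, which follows from the combined $\Delta_2$ and $\nabla_2$ conditions on $B$; and (iii) a Poincar\'e--Wirtinger inequality on $Y$ in the Orlicz norm.

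First, from the weak convergence $v_h \rightharpoonup v$ in $W^{1}L^B(\Omega)$ one obtains a uniform bound for $\|\nabla v_h\|_{L^B(\Omega)}$, so that the norm estimate $\|\mathcal{T}_{\varepsilon_h}(w)\|_{L^B(\Omega\times Y)} \le C\|w\|_{L^B(\Omega)}$ (modulo a boundary-layer term of order $o(1)$) gives boundedness of $\{\mathcal{T}_{\varepsilon_h}(\nabla v_h)\}$ in $L^B(\Omega\times Y;\mathbb{R}^N)$. Reflexivity then extracts a subsequence $\varepsilon_{h_k}$ and $\Xi\in L^B(\Omega\times Y;\mathbb{R}^N)$ with $\mathcal{T}_{\varepsilon_{h_k}}(\nabla v_{h_k})\rightharpoonup \Xi$. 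Testing against $\varphi(x)\otimes \mathbf{1}_Y$ with $\varphi\in C^\infty_c(\Omega;\mathbb{R}^N)$ and using that $\int_Y \mathcal{T}_\varepsilon(u)(x,y)\,dy$ agrees (up to a boundary layer) with a piecewise-constant cell average of $u$ identifies $\int_Y \Xi(x,y)\,dy = \nabla v(x)$.

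To recover a function $V$ with $\nabla_y V = \Xi - \nabla v$, I would introduce the scaled oscillations
\[
V_h(x,y) := \frac{1}{\varepsilon_h}\bigl(\mathcal{T}_{\varepsilon_h}(v_h)(x,y) - \mathcal{M}_{\varepsilon_h}(v_h)(x)\bigr),
\]
where $\mathcal{M}_{\varepsilon_h}$ denotes the cell-mean, and observe that $\nabla_y V_h = \mathcal{T}_{\varepsilon_h}(\nabla v_h)$. The Orlicz Poincar\'e--Wirtinger inequality on $Y$ then gives a uniform bound on $\|V_h\|_{L^B(\Omega;\,L^B_{per}(Y))}$; passing to a further subsequence, $V_h \rightharpoonup V$ weakly with $\nabla_y V = \Xi - \nabla v$, and the $y$-periodicity of the limit follows from the construction together with the density of smooth $Y$-periodic functions in $L^B_{per}(Y)$ guaranteed by the $\Delta_2$ condition.

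The main obstacle is transferring quantitative $L^p$-type estimates to the Orlicz modular setting: specifically, showing that the boundary-layer contributions in the unfolding identities vanish in the $L^B$ sense, and that the Poincar\'e--Wirtinger constant on $Y$ is uniform in $h$. Both points hinge on the $\Delta_2$--$\nabla_2$ equivalence for $B$ and on approximation by smooth functions, and are essentially contained in the machinery developed in \cite{fotso nnang 2012, fotso23}; what remains is a careful bookkeeping of these tools together with a density argument to pass from test functions of the form $\varphi(x)\psi(y)$ with $\psi\in C^\infty_{per}(Y)$ of zero mean to the full characterisation of $V$.
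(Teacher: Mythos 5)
Your overall strategy---transposing the direct Cioranescu--Damlamian--Griso corrector construction to the Orlicz setting---is viable, and the first half (boundedness of $\mathcal{T}_{\varepsilon_h}(\nabla v_h)$ in $L^{B}(\Omega\times Y)$ via Proposition \ref{Prop2}, weak compactness from reflexivity under $\Delta_2\cap\nabla_2$, and the identification $\mathcal{M}_Y(\Xi)=\nabla v$ via Theorem \ref{convBunf}(iii)) is sound. But there are two problems with the corrector step, one of which is the heart of the matter. First, a genuine misidentification: since $\nabla_y V_h=\mathcal{T}_{\varepsilon_h}(\nabla v_h)\rightharpoonup \Xi$, any weak limit $\tilde V$ of $V_h$ satisfies $\nabla_y\tilde V=\Xi$, not $\Xi-\nabla v$; and because $\mathcal{M}_Y(\Xi)=\nabla v$ is in general nonzero, $\tilde V(x,\cdot)$ cannot be $Y$-periodic, so one must first set $V(x,y):=\tilde V(x,y)-\bigl(y-\mathcal{M}_Y(y)\bigr)\cdot\nabla v(x)$ before any periodicity claim makes sense. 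Second, and more seriously, the $Y$-periodicity of this corrected $V$ is exactly the delicate point of the classical proof, and it does not ``follow from the construction together with the density of smooth $Y$-periodic functions in $L^{B}_{per}(Y)$'': $V_h(x,\cdot)$ is not periodic for any fixed $h$, and weak limits do not inherit periodicity from a density statement about test functions. In the $L^p$ theory this step is established either by matching traces on opposite faces of $Y$ (comparing $\mathcal{T}_\varepsilon(v_\varepsilon)(x,y+e_i)$ with $\mathcal{T}_\varepsilon(v_\varepsilon)(x+\varepsilon e_i,y)$ and exploiting that the $x$-shift is of order $\varepsilon$), or by testing $\Xi-\nabla v$ against $y$-periodic, $y$-divergence-free fields and using the characterization of gradients of periodic functions; either argument has to be reproduced in $L^{B}$, and neither appears in your sketch. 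Without it you obtain some $V\in L^{B}(\Omega;W^{1}L^{B}(Y;\mathbb{R}^d))$ with $\nabla_y V=\Xi-\nabla v$, but not the periodicity asserted in the statement.

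Note also that the paper reaches Proposition \ref{maintwo} by a different and shorter route that sidesteps this issue entirely: it quotes the two-scale compactness theorem for bounded sequences in $W^{1}L^{B}$ from \cite[Theorem 4.2]{fotso nnang 2012}, which already produces $u_1\in L^{B}(\Omega;W^{1}_{\#}L^{B}_{per}(Y))$ with $\nabla u_\varepsilon\rightharpoonup\nabla u_0+\nabla_y u_1$ weakly two-scale (periodicity included), and combines it with the equivalence between weak two-scale convergence in $L^{B}$ and weak convergence of the unfolded sequences in $L^{B}(\Omega\times Y)$, which is proved via the embeddings $L^{p}\subseteq L^{B}\subseteq L^{q}$ and the classical \cite[Proposition 1.19]{CDG3}. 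If you prefer your direct route, the Orlicz Poincar\'e--Wirtinger and modular estimates you flag are indeed available under $\Delta_2\cap\nabla_2$, so the remaining and essential piece you must supply is the periodicity argument described above.
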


\color{black}
\bigskip The rest of the paper is organized as follows: section \ref{secpre} deals with
notation and preliminary results on Orlicz-Sobolev spaces, and the properties of the periodic  unfolding operator in Orlicz setting. Section \ref{secmain} is devoted to the proof of the main results. 
\color{black}
\section{Preliminary results}\label{secpre}

\subsection{Notations}
In this section, we start fixing some notation, which for the readers' convenience is not very different from the one adopted in \cite{FTNTZ}. We will also recall some preliminaries about Orlicz and Orlicz-Sobolev spaces that we will use in the sequel.
\begin{itemize}
	\item $\Omega$ is an open bounded subset of $\mathbb{R}^{N}$ with Lipschitz boundary.
	\item $\mathcal{A}(\Omega)$ denotes the family of all open subsets of $\Omega$.
	\item  $\mathcal{L}^{N}$ is the Lebesgue measure in $\mathbb{R}^{N}$.
	\item  $\mathbb{R}^{d\times N}$ is identified with the set of real $d\times N$ matrices.
	\item $Y := (0,1)^{N}$ is the unit cube in $\mathbb{R}^{N}$. 
	\item  The symbols $\langle\cdot\rangle$ and $[\cdot]$ stand, respectively, for the fractional and integer part of a number, or a vector, componentwise.
	\item  The Dirac mass at a point $a\in \mathbb{R}^{d}$ is denoted by $\delta_{a}$.
	\item The symbol $\dashint_{A}$ stands for the average $\mathcal{L}^{N}(A)^{-1}\int_{A}$.
	\item  $U$ is an open subset of $\mathbb{R}^{d}$.
	\item  $\mathcal{C}_{c}(U)$  is the space of continuous  functions $f: U\to \mathbb{R}$ with compact support.
	\item $B: [0,\infty)\to[0,\infty)$ is a Young function, i.e. $B$ is continuous, convex, with $B(t)>0$ for $t>0$, $\frac{B(t)}{t}\to 0$ as $t\to 0$, and $\frac{B(t)}{t}\to \infty$ as $t\to \infty$.
	\item $\widetilde{B}$ stands for the complementary of Young function $B$, defined by 
	\begin{equation*}
		\widetilde{B}(t)= \sup_{s\geq 0} \big\{st - B(s), \; t\geq 0 \big\}.
	\end{equation*} 
	\item We recall that a Young function $B$ is of class $\Delta_{2}$ at $\infty$ or satisfies the $\Delta_2$ condition (denoted $B\in \Delta_{2}$) if there are $\alpha>0$ and $t_{0}\geq 0$ such that 
	$$B(2t) \leq \alpha B(t),\; \textup{for\,all}\, t\geq t_{0}.$$
	Also $B$ is of class $\nabla_2$ (or satisfies $\nabla_2$ condition) if $\tilde B$ is of class $\Delta_2$, i.e. $\exists \beta >0$ and $t_0 >1$ such that  $$B(t) \leq \frac{1}{2 \beta} B(\beta t), \hbox{ for all }t \geq t_0.$$
	
	\item $L^{B}(\Omega;\mathbb{R}^{d})$ is the Orlicz space of functions defined by 
	\begin{equation*}
		L^{B}(\Omega;\mathbb{R}^{d}) = \left\{ u: \Omega\to \mathbb{R}^{d}\,;\, u\,\textup{is\,measurable},\; \lim_{\alpha\to 0}\int_{\Omega}B(\alpha|u(x)|)dx = 0 \right\}.
	\end{equation*}
	We recall that $L^{B}(\Omega;\mathbb{R}^{d})$ is a Banach space with respect to the Luxemburg norm 
	\begin{equation*}
		\|u\|_{B} = \inf \left\{ k>0\,:\, \int_{\Omega}B\left(\dfrac{|u(x)|}{k}\right)dx \leq 1 \right\}\, < \, +\infty.
	\end{equation*}
	Sometimes, we will denote the norm of elements in $L^{B}(\Omega;\mathbb{R}^{d})$, both by  $\|\cdot\|_{B}$ and with $\|\cdot\|_{L^{B}}$.
	\item   $\mathcal{D}(\Omega)$ is the space of  indefinitely differentiable functions $f: \Omega\to \mathbb{R}^{d}$ with compact support. We recall that $\mathcal{D}(\Omega)$ is dense in $L^{B}(\Omega;\mathbb{R}^{d})$, $L^{B}(\Omega;\mathbb{R}^{d})$ is separable and reflexive when $B$ satisfies $\Delta_{2}$ and $\nabla_2$ conditions above.  The dual of $L^{B}(\Omega;\mathbb{R}^{d})$ is identified with $L^{\widetilde{B}}(\Omega;\mathbb{R}^{d})$. The property $\frac{B(t)}{t}\to \infty$ as $t\to \infty$ implies that 
	\begin{equation*}
		L^{B}(\Omega;\mathbb{R}^{d}) \subset L^{1}(\Omega;\mathbb{R}^{d}) \subset L^{1}_{loc}(\Omega;\mathbb{R}^{d}) \subset \mathcal{D}'(\Omega),
	\end{equation*}
	each embedding being continuous.
	\item $W^{1}L^{B}(\Omega,\mathbb{R}^{d})$ is the Orlicz-Sobolev space defined by
	\begin{equation*}
		W^{1}L^{B}(\Omega,\mathbb{R}^{d}) = \left\{ u \in L^{B}(\Omega;\mathbb{R}^{d})\,;\, \dfrac{\partial u}{\partial x_{i}} \in L^{B}(\Omega;\mathbb{R}^{d}),\, 1\leq i\leq N \right\},
	\end{equation*}
	where derivatives are taken in the distributional sense on $\Omega$. Endowed with the norm 
	\begin{equation*}
		\|u\|_{W^{1}L^{B}} = \|u\|_{L^{B}} + \sum_{i=1}^{N} \left\|\dfrac{\partial u}{\partial x_{i}} \right\|_{L^{B}}, \; u \in W^{1}L^{B}(\Omega,\mathbb{R}^{d}),
	\end{equation*}
	$W^{1}L^{B}(\Omega,\mathbb{R}^{d})$ is a reflexive Banach space when $B \in \Delta_{2}\cap \nabla_2$. 
	\item $W^{1}_{0}L^{B}(\Omega,\mathbb{R}^{d})$ denotes the closure of $\mathcal{D}(\Omega)$ in $W^{1}L^{B}(\Omega,\mathbb{R}^{d})$ and the semi-norm 
	\begin{equation*}
		u \longrightarrow \|u\|_{W^{1}_{0}L^{B}} = \|Du\|_{L^{B}} =  \sum_{i=1}^{N} \left\|\dfrac{\partial u}{\partial x_{i}} \right\|_{L^{B}}
	\end{equation*}
	is a norm (of gradient) on $W^{1}_{0}L^{B}(\Omega,\mathbb{R}^{d})$ equivalent to $\|\cdot\|_{W^{1}L^{B}}$.
	\item Given a function space $\mathcal{S}$ defined in $\Omega$, $Y$ or $\Omega\times Y$, the subscript $\mathcal{S}_{per}$ means that the functions are periodic in $\Omega$, $Y$ or $\Omega\times Y$, as it will be clear from the context.
	
	\item
	$L^B(\Omega \times \mathbb R^N_{loc})$ is the space defined by
	$$ L^B(\Omega \times \mathbb R^N_{loc}):=\left\{v \hbox{ measurable, s.t. } \int_{\Omega \times A} B(v(x,y))dxdy<+\infty \hbox{ for every }A \subset \subset \mathbb R^N_y\right\}.$$
	\item $L^{B}(\Omega\times Y_{per})$ is the Orlicz space defined by
\begin{equation*}
	L^{B}(\Omega\times Y_{per}) = \left\{ v \in L^B(\Omega \times \mathbb R^N_{loc}):  v(x,\cdot)\, \hbox{ is }\, Y-\textup{periodic} \hbox{ for a.e. } x \in \Omega\right\}.
\end{equation*}
\item $W^{1}L^{B}_{per}(Y)$ is the Orlicz-Sobolev space defined by
\begin{align*}
	W^{1}L^{B}_{per}(Y) = \big\{ v \in W^{1}L^{B}_{loc}(\mathbb{R}^{N}_{y}): v(x,\cdot)\,\hbox{ and } \tfrac{\partial v}{\partial y_{i}}(x,\cdot),\, 1\leq i\leq N,  \\
	\left. \hbox{ are }\, Y-\textup{periodic} \hbox{ for a.e. }x \in \Omega\right\}.
\end{align*}
	\item $W^{1}_{\#}L^{B}(Y)$ is the Orlicz-Sobolev space defined by
	\begin{equation*}
		W^{1}_{\#}L^{B}(Y) = \left\{ v \in W^{1}L^{B}_{per}(Y)\,:\, \dashint_{Y}v(y) dy = 0 \right\}.
	\end{equation*}
	It is endowed with the $L^B$ norm of the gradients.
	
	\item In our subsequent analysis we denote by 
	\begin{align*}
		L^{B}\left( \Omega; L^B(Y_{per}) \right) :=\Big\{ u\in
		L^{B}\left( \Omega \times 
		\mathbb{R}
		^N_{loc}\right) : u\left( x,\cdot\right) \in
		L_{per}^{B}\left( Y\right)  \\ 
		\left. \hbox{for a.e. }x\in \Omega,\text{ and }\int_{\Omega \times Y}B\left( \left\vert u\left(
		x,y\right) \right\vert \right) dxdy<\infty \right\}.
	\end{align*}
We observe that our is a notation, indeed, in view of \cite[Lemma 2.4]{fotso nnang 2012}, if $\tilde B$ satisfies $\Delta'$ condition, then the above spaces coincide with the standard Orlicz-Bochner spaces, sometimes denoted in the same way.
\item Analogously we consider 
	\begin{align}\label{OBdef}
	L^{B}\left( \Omega; W^1_{\#} L^B(Y) \right) :=\Big\{ u\in
	L^{B}\left( \Omega \times 
	\mathbb{R}
	^N_{loc}\right) : u\left( x,y\right) \in
	L^B(\Omega;L_{per}^{B}\left( Y\right)) , u(x,\cdot) \in W^{1}_{\#} L^B(Y) \nonumber \\ 
	\left. \hbox{for a.e. }x\in \Omega,\hbox{ and }\int_{\Omega \times Y}\left( B(|u(x,y)|)+ B\left( \left\vert \nabla u\left(
	x,y\right) \right\vert \right)\right) dxdy<\infty \right\}. 
\end{align}

\end{itemize}

\subsection{The unfolding operator}\label{sec:con3}

This subsection is devoted to recall the unfolding operator introduced in \cite{CDG2} and some of its properties in the Orlicz setting proven in \cite{fotso23}.  

\noindent Here and in the sequel  $\Omega \in \mathcal A_0(\mathbb R^N)$ 
and $Y:=]0,1[^N$. 
Following \cite{CDG2}, for $z\in 
\mathbb{R}^N ,\left[ z\right] _{Y}$ is the vector with components $\left[ z_{i}\right] 
$ where $\left[ z_{i}\right] $\ is the integer part of $z_{i}.$ It follows
that $z-\left[ z\right] _{Y}=\left\{ z\right\} _{Y}\in Y.$ Then for each $%
x\in 
\mathbb{R}^N$, $x=\varepsilon\left( \left[ \frac{x}{\varepsilon }\right] _{Y}+\left\{ \frac{x}{%
	\varepsilon }\right\} _{Y}\right) .$ 

Define \begin{align}\label{2.1CDG2}
	\Xi_{\varepsilon }:=\left\{ \xi \in 
	\mathbb{Z}
	^{d},\;\varepsilon \left( \xi +Y\right) \subset \Omega \right\},\nonumber\\ 
	\widehat{\Omega }_{\varepsilon }:= {\rm int} \left\{ \bigcup\limits_{\xi \in \Xi
		_{\varepsilon }}\varepsilon \left( \xi +\overline{Y}\right) \right\},\\
	\Lambda _{\varepsilon }:=\Omega \backslash \widehat{\Omega }_{\varepsilon }.\nonumber
\end{align}
The set $\widehat \Omega_\varepsilon$  is the largest union of cells $\varepsilon(\xi + \overline Y)$ (with $\xi \in \mathbb Z^N$) included in $\Omega$ while $\Lambda_\varepsilon$ is
the subset of $\Omega$ containing the parts from cells $\varepsilon(\xi +\overline Y)$ intersecting the boundary $\partial \Omega$.

The unfolding operator, introduced in \cite[Definition 2.1]{CDG2}, which acts on Lebesgue measurable functions is defined as follows.

\begin{definition}\label{unfdef}
	For $\phi $ Lebesgue measurable on $\Omega$, the unfolding operator $%
	\mathcal{T}_{\varepsilon }$ is defined as 
	\begin{equation*}
		\mathcal{T}_{\varepsilon }\left( \phi \right) \left( x,y\right) =\left\{ 
		\begin{tabular}{ll}
			$\phi \left( \varepsilon \left[ \frac{x}{\varepsilon }\right]
			_{Y}+\varepsilon y\right) $ & a.e for $\left( x,y\right) \in \widehat{\Omega 
			}_{\varepsilon }\times Y$ \\ 
			$0$ & a.e for $\left( x,y\right) \in \Lambda _{\varepsilon }\times Y.$%
		\end{tabular}%
		\right.
	\end{equation*}%
	Clearly $\mathcal T_\varepsilon (\phi)$ is Lebesgue measurable in $\Omega \times Y$, and is $0$ whenever $x$ is outside $\widehat \Omega_\varepsilon$. Moreover, for every $v,w $ Lebesgue-measurable, $$\mathcal{T}_{\varepsilon }\left( vw\right) =\mathcal{T}%
	_{\varepsilon }\left( v\right) \mathcal{T}_{\varepsilon }\left( w\right)$$
\end{definition}

Analogous identities hold for other pointwise operations on functions. In particular, the operator is linear with respect to pointwise operations. In the sequel we recall a series of result whose proofs can be found in \cite{fotso23}.

\begin{proposition}
	Let $f\in L_{per}^{1}\left( Y\right) $ define the sequence $\left\{
	f_{\varepsilon }\right\}_\varepsilon $ by $f_{\varepsilon }\left( x\right) :=f\left( 
	\frac{x}{\varepsilon }\right) $ a.e for $x\in 
	\mathbb{R}
	^{n},$ then 
	\begin{equation*}
		\mathcal{T}_{\varepsilon }\left( f_{\varepsilon }|_{\Omega }\right) \left(
		x,y\right) =\left\{ 
		\begin{tabular}{ll}
			$f\left( y\right) $ & \;a.e for $\left( x,y\right) \in \widehat{\Omega }%
			_{\varepsilon }\times Y$, \\ 
			$0$ & \;a.e for $\left( x,y\right) \in \Lambda _{\varepsilon }\times Y.$%
		\end{tabular}%
		\right.
	\end{equation*}%
	If $f\in L_{per}^{B}\left( Y\right)$, 
	$\mathcal{T}_{\varepsilon }\left( f_{\varepsilon }|_{\Omega }\right)
	\to f$ strongly in $L^{B}\left( \Omega \times Y\right) .$
\end{proposition}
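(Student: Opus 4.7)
The first identity follows by unwinding Definition~\ref{unfdef}: for $(x,y)\in\widehat\Omega_\varepsilon\times Y$, since $f_\varepsilon(z)=f(z/\varepsilon)$, one has
\[\mathcal{T}_\varepsilon(f_\varepsilon|_\Omega)(x,y)=f\bigl([x/\varepsilon]_Y+y\bigr),\]
and since $[x/\varepsilon]_Y\in\mathbb Z^N$ and $f$ is $Y$-periodic, this collapses to $f(y)$. Outside $\widehat\Omega_\varepsilon\times Y$ the unfolding vanishes by definition.

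For the convergence, I would identify the target $f$ with the element $(x,y)\mapsto f(y)$ of $L^B(\Omega\times Y)$. The first identity then gives that $\mathcal{T}_\varepsilon(f_\varepsilon|_\Omega)(x,y)-f(y)$ vanishes on $\widehat\Omega_\varepsilon\times Y$ and equals $-f(y)$ on $\Lambda_\varepsilon\times Y$. Hence, for every $k>0$, the modular factors as
\[\int_{\Omega\times Y} B\!\left(\tfrac{|\mathcal{T}_\varepsilon(f_\varepsilon|_\Omega)(x,y)-f(y)|}{k}\right)dxdy \,=\, |\Lambda_\varepsilon|\int_Y B\!\left(\tfrac{|f(y)|}{k}\right)dy.\]
Since $\Omega$ has Lipschitz boundary, $\Lambda_\varepsilon$ is contained in a $\sqrt{N}\,\varepsilon$-neighbourhood of $\partial\Omega$, so $|\Lambda_\varepsilon|\to 0$; and the $\Delta_2$ condition on $B$ ensures that $\int_Y B(|f|/k)\,dy<\infty$ for every $k>0$ whenever $f\in L^B_{per}(Y)$. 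Therefore the right-hand side tends to $0$ for each fixed $k$, so for $\varepsilon$ small enough the modular is below $1$, meaning $\|\mathcal{T}_\varepsilon(f_\varepsilon|_\Omega)-f\|_{L^B(\Omega\times Y)}\leq k$. Since $k>0$ was arbitrary, strong convergence follows.

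I do not foresee a genuine obstacle: the proof reduces to the geometric fact $|\Lambda_\varepsilon|\to 0$, coming from the Lipschitz regularity of $\partial\Omega$, together with the scale-invariance of Orlicz integrability guaranteed by $\Delta_2$. The only mildly delicate point is the passage from modular convergence at every scale to Luxemburg-norm convergence, which is handled by making $k$ arbitrarily small.
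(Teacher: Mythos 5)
Your proof is correct and follows the standard route: the pointwise identity comes from unwinding Definition~\ref{unfdef} together with the $Y$-periodicity of $f$, and the strong convergence reduces to the fact that the error is supported in $\Lambda_\varepsilon\times Y$, where it equals $-f(y)$, combined with $|\Lambda_\varepsilon|\to 0$. The paper does not reproduce a proof of this proposition (it refers to \cite{fotso23}), but your argument is precisely the one used there and in the $L^p$ analogue in \cite{CDG2}, so there is nothing genuinely different to compare. One remark worth keeping in mind: your appeal to $\Delta_2$ is not cosmetic. The statement as printed carries no hypothesis on $B$, but without $\Delta_2$ one can have $f\in L^B_{per}(Y)$ with $\int_Y B(|f|/k)\,dy=+\infty$ for small $k>0$, in which case the Luxemburg norm of $(x,y)\mapsto f(y)\chi_{\Lambda_\varepsilon}(x)$ stays bounded away from zero and the convergence fails; since $\Delta_2$ (and $\nabla_2$) is a standing assumption throughout the paper, invoking it as you do is the correct reading of the statement.
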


 The following result can be immediately deduced by \cite[Proposition 2.5]{CDG2}, observing that for every Young function $B$ and $w \in L^B(\Omega)$ $B(\mathcal T_\varepsilon(w))= \mathcal T_\varepsilon B(w)$.
\color{black}

\begin{proposition}\label{Prop2}
	For every Young function $B$, the operator $\mathcal{T}_{\varepsilon }$\ is linear and
	continuous from $L^{B}\left( \Omega \right) $ to $L^{B}\left( \Omega \times
	Y\right) $. 
	It results that
	\begin{itemize}
		\item[i)] $\frac{1}{|Y|}\int_{\Omega \times Y}
		B(\mathcal T_\varepsilon (w))(x, y) dx dy =\int_\Omega B(w(x))dx- \int_{\Lambda_\varepsilon}B(w(x))dx= \int_{\widehat \Omega_\varepsilon} B(w(x))dx.$
		\\	\item[ii)]$ \frac{1}{|Y|}\int_{\Omega \times Y}
		B(\mathcal T_\varepsilon (w))(x, y) dx dy \leq \int_\Omega B(w(x))dx$
		\\	\item[iii)] $\frac{1}{|Y|}|\int_{\Omega \times Y}
		B(\mathcal T_\varepsilon (w))(x, y) dx dy -\int_\Omega B(w(x))dx|\leq \int_{\Lambda_\varepsilon}B(w(x))dx$
		\\	\item[iv)]$\|\mathcal T_\varepsilon(w)\|_{L^B(\Omega \times Y)} = \|w \chi_{\widehat \Omega_\varepsilon}\|_{L^B(\Omega)}$,
	\end{itemize}	
	with $\chi_{\widehat \Omega_\varepsilon}=\left\{\begin{array}{ll} 
		1 & \;\hbox{ in }\widehat \Omega_\varepsilon,\\
		0 & \;\hbox{otherwise},
	\end{array}\right.$

	In particular, for every $\varepsilon, $
	\begin{align}\label{est1}	\left\Vert \mathcal{T}%
		_{\varepsilon }\left( w\right) \right\Vert _{L^{B}\left( \Omega \times
			Y\right) }\leq \left( 1+\left\vert Y\right\vert \right) \left\Vert
		w\right\Vert _{L^{B}\left( \Omega \right) }.
	\end{align}
\end{proposition}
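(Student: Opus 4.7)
The plan is to derive the four listed identities first, since linearity of $\mathcal{T}_\varepsilon$ is immediate from its pointwise definition and the continuity assertion will follow from the norm identity in (iv) together with its final corollary.

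The central identity is (i), which I would prove by a direct cell-by-cell computation. By the pointwise commutation $B(\mathcal{T}_\varepsilon(w))=\mathcal{T}_\varepsilon(B(w))$ noted just above the statement (valid because $\mathcal{T}_\varepsilon$ acts by precomposition with a measurable map, and composition with $B$ preserves measurability), I can replace the left-hand integrand by $\mathcal{T}_\varepsilon(B(w))$. Using Definition \ref{unfdef}, this integrand vanishes on $\Lambda_\varepsilon \times Y$, so
$$\int_{\Omega \times Y} B(\mathcal{T}_\varepsilon(w))(x,y)\,dx\,dy = \sum_{\xi \in \Xi_\varepsilon} \int_{\varepsilon(\xi+Y)\times Y} B(w)\bigl(\varepsilon\xi + \varepsilon y\bigr)\,dx\,dy.$$
For each fixed $\xi$, the integrand is independent of $x$, so the $x$-integral contributes the factor $|\varepsilon(\xi+Y)|=\varepsilon^N|Y|$. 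The change of variables $z=\varepsilon\xi+\varepsilon y$ (Jacobian $\varepsilon^N$) then yields $\int_Y B(w)(\varepsilon\xi+\varepsilon y)\,dy = \varepsilon^{-N}\int_{\varepsilon(\xi+Y)} B(w)(z)\,dz$. Summing over $\xi \in \Xi_\varepsilon$ and recalling that $\widehat{\Omega}_\varepsilon = \mathrm{int}\bigcup_{\xi}\varepsilon(\xi+\overline{Y})$ up to a null set gives
$$\frac{1}{|Y|}\int_{\Omega \times Y} B(\mathcal{T}_\varepsilon(w))\,dx\,dy = \int_{\widehat{\Omega}_\varepsilon} B(w)(z)\,dz,$$
and the disjoint decomposition $\Omega = \widehat{\Omega}_\varepsilon \cup \Lambda_\varepsilon$ (up to null sets) yields both equalities in (i).

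The remaining items are immediate consequences. Inequality (ii) follows from (i) since $B\ge 0$ forces $\int_{\Lambda_\varepsilon} B(w)\,dx \ge 0$, and (iii) is the same identity rearranged with absolute values. For (iv), applying (i) with $w$ replaced by $w/k$ ($k>0$) gives
$$\frac{1}{|Y|}\int_{\Omega\times Y} B\!\left(\frac{|\mathcal{T}_\varepsilon(w)|}{k}\right) dx\,dy = \int_\Omega B\!\left(\frac{|w\,\chi_{\widehat{\Omega}_\varepsilon}|}{k}\right) dx.$$
Since $Y=(0,1)^N$ has $|Y|=1$, the two integrals defining the Luxemburg norms in (iv) coincide, hence so do the infima over admissible $k$, proving the norm equality. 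The final estimate $\|\mathcal{T}_\varepsilon(w)\|_{L^B(\Omega\times Y)}\le(1+|Y|)\|w\|_{L^B(\Omega)}$ then follows a fortiori from (iv), via the pointwise bound $|w\,\chi_{\widehat{\Omega}_\varepsilon}|\le |w|$ and monotonicity of the Luxemburg norm; combined with the evident linearity from Definition \ref{unfdef}, this delivers the stated continuity.

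The only nontrivial step is the cell-by-cell computation underlying (i); the care required there is just the book-keeping of the disjoint decomposition of $\widehat{\Omega}_\varepsilon$ into translates of $\varepsilon Y$ and ensuring that the commutation $B(\mathcal{T}_\varepsilon w)=\mathcal{T}_\varepsilon(B(w))$ is applied to the nonnegative function $B(w)$, so that all integrals are meaningful prior to any norm control. No subtlety beyond measurability and nonnegativity of the Young function $B$ enters the argument, and in particular the $\Delta_2$/$\nabla_2$ assumptions are not needed at this stage.
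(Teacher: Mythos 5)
Your proposal is correct and takes essentially the same route as the paper: the paper deduces the proposition from the commutation $B(\mathcal T_\varepsilon(w))=\mathcal T_\varepsilon(B(w))$ combined with the $L^1$ unfolding identity of \cite[Proposition 2.5]{CDG2}, and your cell-by-cell change of variables is precisely the proof of that identity, with (ii)--(iv) and \eqref{est1} following exactly as you indicate (the equality in (iv) using $|Y|=1$).
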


From this result, in particular from  ii), it is possible to provide, as in the standard $L^p$ setting (cf. \cite[Proposition 2.6]{CDG2}), an unfolding criterion for integrals in the Orlicz setting.
For the sake of a more complete parallel with the standard $L^p$ setting, we recall the  unfolding criterion for integrals,  u.c.i. for short, as introduced in \cite{CDG2}. 

\begin{proposition}\label{u.c.i.}
	If $\left\{ w _{\varepsilon }\right\}_\varepsilon $\ is a
	sequence in $L^{1}\left( \Omega \right) $ satisfying $\int_{\Lambda
		_{\varepsilon }}\left| w _{\varepsilon }\right| dx\to
	0$\ as $\varepsilon \to 0,$ then \begin{align*}\ \int_{\Omega }
		w_{\varepsilon } dx- \frac{1}{\left\vert Y\right\vert }%
		\int_{\Omega \times Y}\mathcal{T}_{\varepsilon }\left( w _{\varepsilon
		}\right) dxdy\to 0\end{align*} as $\varepsilon \to 0$.
\end{proposition}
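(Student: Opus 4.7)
The plan is to reduce the u.c.i.\ to the elementary observation that the unfolding operator preserves $L^1$-integrals over the interior part $\widehat\Omega_\varepsilon$ and vanishes on the boundary strip $\Lambda_\varepsilon\times Y$, and then to exploit the hypothesis directly. Concretely, I would first establish
$$\frac{1}{|Y|}\int_{\Omega\times Y}\mathcal T_\varepsilon(w)(x,y)\,dx\,dy \;=\; \int_{\widehat\Omega_\varepsilon}w(x)\,dx \qquad (\ast)$$
for every $w\in L^1(\Omega)$. This is the signed analogue of Proposition~\ref{Prop2}(i) in the degenerate case ``$B(t)=t$''; since the identity function is not a Young function in the paper's sense (it fails $B(t)/t\to 0$ at $0$), Proposition~\ref{Prop2}(i) cannot be quoted verbatim, and I would verify $(\ast)$ by hand.

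The verification is a short cell-by-cell calculation. By Definition~\ref{unfdef}, $\mathcal T_\varepsilon(w)\equiv 0$ on $\Lambda_\varepsilon\times Y$, so the integral reduces to $\widehat\Omega_\varepsilon\times Y$. On each cell $\varepsilon(\xi+Y)$ with $\xi\in\Xi_\varepsilon$ one has $[x/\varepsilon]_Y=\xi$, constant in $x$, so $\mathcal T_\varepsilon(w)(x,y)=w(\varepsilon\xi+\varepsilon y)$ is independent of $x$; Fubini together with the change of variables $z=\varepsilon\xi+\varepsilon y$ in the $y$-integral yields
$$\int_{\varepsilon(\xi+Y)\times Y}\mathcal T_\varepsilon(w)\,dx\,dy \;=\; \varepsilon^N\int_Y w(\varepsilon\xi+\varepsilon y)\,dy \;=\; \int_{\varepsilon(\xi+Y)}w(z)\,dz.$$
Summing over $\xi\in\Xi_\varepsilon$ and using $|Y|=1$ gives $(\ast)$; in particular, this shows $\mathcal T_\varepsilon(w_\varepsilon)\in L^1(\Omega\times Y)$ whenever $w_\varepsilon\in L^1(\Omega)$, so that the left-hand side of the statement is well-defined.

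Finally, since $\Omega=\widehat\Omega_\varepsilon\cup\Lambda_\varepsilon$ is a disjoint decomposition, $(\ast)$ applied to $w_\varepsilon$ rearranges as
$$\int_\Omega w_\varepsilon(x)\,dx\;-\;\frac{1}{|Y|}\int_{\Omega\times Y}\mathcal T_\varepsilon(w_\varepsilon)(x,y)\,dx\,dy \;=\; \int_{\Lambda_\varepsilon}w_\varepsilon(x)\,dx,$$
and taking absolute values together with the hypothesis $\int_{\Lambda_\varepsilon}|w_\varepsilon|\,dx\to 0$ immediately produces the claim. There is no genuine obstacle here: once $(\ast)$ is in place, the conclusion is a two-line bookkeeping. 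The only minor subtlety, flagged above, is that Proposition~\ref{Prop2}(i) cannot be invoked directly with ``$B(t)=t$''; one either redoes the change-of-variables computation as above or splits $w_\varepsilon=w_\varepsilon^+-w_\varepsilon^-$ and applies the non-negative identity to each part.
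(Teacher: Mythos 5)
Your argument is correct and is essentially the standard one the paper relies on: the exact integration (cell-by-cell change of variables) identity $\frac{1}{|Y|}\int_{\Omega\times Y}\mathcal T_\varepsilon(w)\,dx\,dy=\int_{\widehat\Omega_\varepsilon}w\,dx$, which is precisely how the $L^1$ criterion is obtained in the reference \cite{CDG2} that the paper invokes, after which the difference reduces to $\int_{\Lambda_\varepsilon}w_\varepsilon\,dx$ and the hypothesis finishes the proof. Your remark that Proposition~\ref{Prop2}(i) cannot be cited verbatim with ``$B(t)=t$'' (not a Young function) and must be reproved directly is an accurate and welcome precision.
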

This result justifies the following notation for integrals of unfolding operators. Indeed, if $\left\{ w_{\varepsilon }\right\}_\varepsilon$ is a sequence
satisfying $u.c.i$, we write $$\int_{\Omega } w_{\varepsilon
} dx\overset{\mathcal{T}_{\varepsilon }}{\simeq }\frac{1}{%
	\left\vert Y\right\vert }\int_{\Omega \times Y}\mathcal{T}_{\varepsilon
}\left( w_{\varepsilon }\right) dxdy.$$

\begin{proposition}[\bf u.c.i. in the Orlicz setting]\label{uciO}
	If $\left\{ w _{\varepsilon }\right\}_\varepsilon $\ is a
	sequence in $L^B\left( \Omega \right) $ satisfying $\int_{\Lambda
		_{\varepsilon }}B( w _{\varepsilon })dx\to
	0$\ as $\varepsilon \to 0,$ then $$\ \int_{\Omega }\
	B(w_{\varepsilon }) dx-\frac{1}{\left\vert Y\right\vert }%
	\int_{\Omega \times Y}B\left(\mathcal{T}_{\varepsilon }\left( w _{\varepsilon
	}\right)\right) dxdy\to 0 $$\ as $\varepsilon \to 0$.
\end{proposition}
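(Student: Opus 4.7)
The plan is to reduce Proposition~\ref{uciO} to the $L^1$-version of the unfolding criterion, Proposition~\ref{u.c.i.}, by using $B(w_\varepsilon)$ as the new integrand. First I would observe that since $w_\varepsilon\in L^B(\Omega)$ and the conclusion only makes sense when $\int_\Omega B(w_\varepsilon)\,dx$ is finite, the scalar function $v_\varepsilon:=B(w_\varepsilon)$ is a nonnegative element of $L^1(\Omega)$. The hypothesis $\int_{\Lambda_\varepsilon}B(w_\varepsilon)\,dx\to 0$ is then exactly $\int_{\Lambda_\varepsilon}|v_\varepsilon|\,dx\to 0$, which is the assumption required by Proposition~\ref{u.c.i.}.

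Next I would apply Proposition~\ref{u.c.i.} to $\{v_\varepsilon\}$ to obtain
\begin{equation*}
\int_{\Omega}v_\varepsilon\,dx-\frac{1}{|Y|}\int_{\Omega\times Y}\mathcal{T}_\varepsilon(v_\varepsilon)\,dx\,dy\;\longrightarrow\;0.
\end{equation*}
To convert the second integrand back to the form appearing in the statement, I would invoke the pointwise compatibility of the unfolding operator with composition by a Borel function, namely $\mathcal{T}_\varepsilon(B(w_\varepsilon))=B(\mathcal{T}_\varepsilon(w_\varepsilon))$. This identity is the same one recorded just before Proposition~\ref{Prop2} and is immediate from the very definition of $\mathcal{T}_\varepsilon$, since on $\widehat\Omega_\varepsilon\times Y$ both sides equal $B(w_\varepsilon(\varepsilon[x/\varepsilon]_Y+\varepsilon y))$, while on $\Lambda_\varepsilon\times Y$ both sides vanish (using $B(0)=0$, which follows from $B(t)/t\to 0$ as $t\to 0^+$ together with continuity).

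Substituting $\mathcal{T}_\varepsilon(B(w_\varepsilon))=B(\mathcal{T}_\varepsilon(w_\varepsilon))$ into the displayed limit then yields the claim. Alternatively, and perhaps more directly, one may simply quote item i) of Proposition~\ref{Prop2} applied to $w=w_\varepsilon$, which gives the exact identity
\begin{equation*}
\int_\Omega B(w_\varepsilon)\,dx-\frac{1}{|Y|}\int_{\Omega\times Y}B(\mathcal{T}_\varepsilon(w_\varepsilon))\,dx\,dy=\int_{\Lambda_\varepsilon}B(w_\varepsilon)\,dx,
\end{equation*}
and the right-hand side tends to zero by hypothesis. I do not anticipate any genuine obstacle: the only point that requires a moment of care is the justification of $B\circ \mathcal{T}_\varepsilon=\mathcal{T}_\varepsilon\circ B$ on all of $\Omega\times Y$ (including the set $\Lambda_\varepsilon\times Y$ where $\mathcal{T}_\varepsilon(w_\varepsilon)$ is declared to be zero), which is handled by the remark $B(0)=0$.
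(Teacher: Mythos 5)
Your proposal is correct and essentially coincides with the paper's argument: the paper obtains Proposition \ref{uciO} directly from Proposition \ref{Prop2} (the identity in item i), which rests on the pointwise relation $B(\mathcal{T}_\varepsilon(w))=\mathcal{T}_\varepsilon(B(w))$ noted just before it), which is exactly your ``more direct'' second route. Your first route through the $L^1$ criterion of Proposition \ref{u.c.i.} is just a repackaging of the same identity, so there is nothing genuinely different to compare.
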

\begin{proposition}\label{Prop5}
	Let $\left\{ u_{\varepsilon }\right\}_\varepsilon $ be a bounded sequence in $
	L^{B}\left( \Omega \right) $ and $v\in L^{\widetilde{B}}\left( \Omega
	\right) $ then $$\int_{\Omega }u_{\varepsilon }vdx\overset{\mathcal{T}_{\varepsilon }}{\simeq } \frac{1}{\left\vert Y\right\vert }\int_{\Omega
		\times Y}\mathcal{T}_{\varepsilon }\left( u_{\varepsilon }\right) \mathcal{T}%
	_{\varepsilon }\left( v\right) dxdy.$$
\end{proposition}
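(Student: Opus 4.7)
The plan is to reduce the statement to an application of the scalar unfolding criterion for integrals (Proposition \ref{u.c.i.}) applied to the sequence $w_\varepsilon := u_\varepsilon v$. Since $\mathcal{T}_\varepsilon$ is multiplicative on pointwise products (Definition \ref{unfdef}), one has $\mathcal{T}_\varepsilon(u_\varepsilon v) = \mathcal{T}_\varepsilon(u_\varepsilon)\,\mathcal{T}_\varepsilon(v)$, so the desired asymptotic equivalence
$$\int_\Omega u_\varepsilon v\,dx - \frac{1}{|Y|}\int_{\Omega\times Y}\mathcal{T}_\varepsilon(u_\varepsilon)\,\mathcal{T}_\varepsilon(v)\,dx\,dy \longrightarrow 0$$
will follow from Proposition \ref{u.c.i.} as soon as we verify the boundary-layer estimate
$$\int_{\Lambda_\varepsilon} |u_\varepsilon v|\,dx \longrightarrow 0 \quad \text{as } \varepsilon\to 0.$$

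To establish this, I would invoke the Hölder inequality for Orlicz spaces (valid for any complementary pair $(B,\widetilde{B})$):
$$\int_{\Lambda_\varepsilon} |u_\varepsilon v|\,dx \;=\; \int_\Omega |u_\varepsilon|\,|v|\,\chi_{\Lambda_\varepsilon}\,dx \;\leq\; 2\,\|u_\varepsilon\|_{L^B(\Omega)}\,\|v\,\chi_{\Lambda_\varepsilon}\|_{L^{\widetilde B}(\Omega)}.$$
The first factor is uniformly bounded by hypothesis. For the second factor, I would use that $\Lambda_\varepsilon$, being the union of $Y$-cells of size $\varepsilon$ that meet $\partial\Omega$, satisfies $\mathcal{L}^N(\Lambda_\varepsilon)\to 0$ by the Lipschitz regularity of $\partial\Omega$; consequently $\chi_{\Lambda_\varepsilon}\to 0$ in measure, and since $v\in L^{\widetilde B}(\Omega)$ with $\widetilde B\in\Delta_2$ (equivalent to the standing $\nabla_2$ assumption on $B$ used throughout the paper), the Luxemburg norm of $v\,\chi_{\Lambda_\varepsilon}$ is absolutely continuous with respect to the measure of the truncating set, so it tends to zero.

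Combining the two ingredients yields $\int_{\Lambda_\varepsilon}|u_\varepsilon v|\,dx\to 0$, so Proposition \ref{u.c.i.} applies to $w_\varepsilon = u_\varepsilon v\in L^1(\Omega)$ and produces exactly the desired $\overset{\mathcal{T}_\varepsilon}{\simeq}$ relation. The only subtle step is the absolute continuity of the $L^{\widetilde B}$-norm; this is a standard fact once $\widetilde B$ satisfies $\Delta_2$, but it is the one point where the $\nabla_2$ hypothesis on $B$ enters, and it is what rules out pathological behavior that could occur for a general Young function.
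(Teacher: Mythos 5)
Your proposal is correct and follows essentially the route the paper itself relies on (the proposition is recalled here without proof from \cite{fotso23}): multiplicativity of $\mathcal{T}_{\varepsilon}$, the u.c.i.\ criterion of Proposition \ref{u.c.i.} applied to $w_\varepsilon=u_\varepsilon v$, the Orlicz--H\"older inequality, and the vanishing of $\left\Vert v\chi_{\Lambda_\varepsilon}\right\Vert_{L^{\widetilde{B}}}$ since $\mathcal L^N(\Lambda_\varepsilon)\to 0$. Your observation that this last step uses the $\nabla_2$ condition on $B$ (equivalently $\widetilde{B}\in\Delta_2$, giving absolute continuity of the $L^{\widetilde{B}}$-norm), which is not written in the bare statement but is part of the paper's standing assumptions, is accurate and worth keeping.
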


As in the classical Lebesgue setting we can define the mean value operator acting on $L^B$ spaces.

\begin{definition}\label{meandef}
	The mean value operator $\mathcal{M}_{Y}\colon L^{B}\left( \Omega
	\times Y\right) \to L^{B}\left( \Omega \right) $ is defined as
	follows $$\mathcal{M}_{Y}\left( w \right) \left( x\right) :=\frac{
		1}{\left\vert Y\right\vert }\int_Y w\left( x,y\right) dy$$ for
	a.e. $x\in \Omega$ and for every $w \in L^B(\Omega \times Y)$.
\end{definition}

In particular $\left\Vert \mathcal{M}_{Y}\left( w\right) \right\Vert
	_{L^{B}\left( \Omega \right) }\leq \frac{1}{\sup\{1,|Y|^{-1}\}}
	\left\Vert
	w \right\Vert _{L^{B}\left( \Omega \times Y\right) },$ for every $w
	\in L^{B}\left( \Omega \times Y\right) .$ 
	
We also recall
the convergence properties related to the unfolding operator
when $\varepsilon \to 0$, i.e.  for $w$ uniformly continuous on $\Omega$, with modulus of continuity $m_w$, it is
easy to see that
\begin{align*}
	\sup_{x\in \widehat{\Omega}_\varepsilon,y \in Y}
	|{\mathcal T}_\varepsilon(w)(x, y) -w(x)| \leq m_w(\varepsilon).
\end{align*}

The following theorem extends to the Orlicz setting the correspective one in classical $L^p$ spaces, (cf. \cite[Proposition 2.9]{CDG2} and \cite[Theorem 1]{fotso23}).

\begin{theorem}\label{convBunf}
	Let $B$ be a Young function satisfying $\nabla_2$ and  $\Delta_2$ conditions. The following results hold:
	\begin{itemize}
		\item[i)] For $w\in L^{B}( \Omega) ,\mathcal{T}_{\varepsilon }\left( w\right) \to w$ strongly in $L^{B}\left(
		\Omega \times Y\right) $.
		
		\item[ii)] Let $\left\{ w_{\varepsilon }\right\}_\varepsilon $ be a sequence in 
		$L^{B}\left( \Omega \right) $ such that $w_{\varepsilon }\to w$
		strongly in $L^{B}\left( \Omega \right) ,$ then $\mathcal{T}_{\varepsilon
		}\left( w_\varepsilon\right) \to w$ strongly in $L^{B}\left( \Omega \times
		Y\right) .$
		
		\item[iii)] \, For every relatively weakly compact sequence $\left\{
		w_{\varepsilon }\right\}_\varepsilon $ in $L^{B}\left( \Omega \right) $\ the
		corresponding $\left\{ \mathcal{T}_{\varepsilon }\left( w_{\varepsilon
		}\right) \right\}_\varepsilon $ is relatively weakly compact in $L^{B}\left( \Omega
		\times Y\right) .$
		Furthermore,
		if $\mathcal{T}_{\varepsilon }\left( w_{\varepsilon }\right)
		\rightharpoonup \widehat{w}$ weakly in $L^{B}\left( \Omega \times Y\right) ,$
		then $w_{\varepsilon }\rightharpoonup \mathcal{M}_{Y}\left( \widehat{w}%
		\right) $ weakly in $L^{B}\left( \Omega \right) $.
		
		\item[iv)] If $\mathcal{T}_{\varepsilon }\left( w_{\varepsilon
		}\right) \rightharpoonup \widehat{w}$ weakly in $L^{B}\left( \Omega \times
		Y\right) ,$ then 
		\begin{align*}
			\left\Vert \widehat{w}\right\Vert _{L^{B}\left( \Omega
				\times Y\right) }\leq \underset{\varepsilon \rightarrow 0}{\lim \inf }\left(
			1+\left\vert Y\right\vert \right) \left\Vert w_{\varepsilon }\right\Vert
			_{L^{B}\left( \Omega \right) }.
		\end{align*}
\end{itemize}
\end{theorem}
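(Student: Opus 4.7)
The plan is to establish (i), (ii), (iv), (iii) in this order. Item (i) is the workhorse and will be proved by a density argument; (ii) and (iv) then follow quickly from (i) combined with the continuity estimate \eqref{est1} of Proposition \ref{Prop2}; (iii) requires both reflexivity of $L^B(\Omega \times Y)$ (guaranteed by $B \in \Delta_2 \cap \nabla_2$) and a duality argument via Proposition \ref{Prop5}.

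For (i), I first handle $\varphi \in \mathcal{D}(\Omega)$, which is dense in $L^B(\Omega)$ since $B \in \Delta_2$. On $\widehat{\Omega}_\varepsilon \times Y$ the uniform continuity estimate $\sup|\mathcal{T}_\varepsilon(\varphi) - \varphi| \leq m_\varphi(\varepsilon)$ recalled just before the theorem, together with the convexity of $B$ and $B(0)=0$, gives uniform decay of $B(|\mathcal{T}_\varepsilon(\varphi) - \varphi|)$; on $\Lambda_\varepsilon \times Y$, $\mathcal{T}_\varepsilon(\varphi)$ vanishes while $B(|\varphi|)$ is bounded and $|\Lambda_\varepsilon| \to 0$, so absolute continuity of the integral yields $\int_{\Lambda_\varepsilon \times Y}B(|\varphi|)\,dx\,dy \to 0$. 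Modular convergence together with the $\Delta_2$ condition then upgrades to Luxemburg-norm convergence. For general $w \in L^B(\Omega)$, I pick $\varphi_k \in \mathcal{D}(\Omega)$ with $\|w - \varphi_k\|_{L^B(\Omega)} \to 0$ and apply the triangle inequality
\[ \|\mathcal{T}_\varepsilon(w) - w\|_{L^B(\Omega \times Y)} \leq \|\mathcal{T}_\varepsilon(w - \varphi_k)\|_{L^B(\Omega \times Y)} + \|\mathcal{T}_\varepsilon(\varphi_k) - \varphi_k\|_{L^B(\Omega \times Y)} + \|\varphi_k - w\|_{L^B(\Omega \times Y)}, \]
bounding the first term by $(1+|Y|)\|w - \varphi_k\|_{L^B(\Omega)}$ via \eqref{est1}, sending $\varepsilon \to 0$ by the smooth case, and finally $k \to \infty$.

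Items (ii) and (iv) are short. For (ii), decompose $\mathcal{T}_\varepsilon(w_\varepsilon) - w = \mathcal{T}_\varepsilon(w_\varepsilon - w) + (\mathcal{T}_\varepsilon(w) - w)$ and combine \eqref{est1} (applied to the first summand) with (i) (applied to the second). For (iv), reflexivity yields weak lower semicontinuity of the Luxemburg norm, hence $\|\widehat{w}\|_{L^B(\Omega \times Y)} \leq \liminf_\varepsilon \|\mathcal{T}_\varepsilon(w_\varepsilon)\|_{L^B(\Omega \times Y)} \leq \liminf_\varepsilon (1+|Y|)\|w_\varepsilon\|_{L^B(\Omega)}$. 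For (iii), reflexivity reduces relative weak compactness to norm boundedness, which \eqref{est1} transfers from $\{w_\varepsilon\}$ to $\{\mathcal{T}_\varepsilon(w_\varepsilon)\}$. To identify the weak limit, fix $v \in L^{\tilde B}(\Omega)$; since $B \in \Delta_2 \cap \nabla_2$ is equivalent to $\tilde B \in \Delta_2 \cap \nabla_2$, item (i) applied to $v$ gives $\mathcal{T}_\varepsilon(v) \to v$ strongly in $L^{\tilde B}(\Omega \times Y)$, and boundedness of $\{w_\varepsilon\}$ activates Proposition \ref{Prop5}:
\[ \int_\Omega w_\varepsilon v \, dx - \frac{1}{|Y|}\int_{\Omega \times Y} \mathcal{T}_\varepsilon(w_\varepsilon) \, \mathcal{T}_\varepsilon(v) \, dx\,dy \longrightarrow 0. \]
Weak-strong convergence of the product together with Fubini identifies the limit as $\int_\Omega \mathcal{M}_Y(\widehat{w})(x) \, v(x) \, dx$, so $w_\varepsilon \rightharpoonup \mathcal{M}_Y(\widehat{w})$ in $L^B(\Omega)$.

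The main obstacle is the density step in (i), where one must combine uniform convergence on the interior cells $\widehat{\Omega}_\varepsilon$ with control of the vanishing boundary layer $\Lambda_\varepsilon$ and then lift modular convergence to Luxemburg-norm convergence via $\Delta_2$. The duality identification in (iii) is the next most delicate point: it depends crucially on the full symmetry $B, \tilde B \in \Delta_2 \cap \nabla_2$, so that item (i), the reflexivity argument, and the duality $(L^B)^\ast = L^{\tilde B}$ are simultaneously available.
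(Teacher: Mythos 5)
Your proposal is correct and follows essentially the same route the paper relies on (the theorem is only quoted here, with the proof delegated to \cite[Proposition 2.9]{CDG2} and \cite[Theorem 1]{fotso23}): density of $\mathcal{D}(\Omega)$ plus the uniform-continuity bound and the estimate \eqref{est1} for i)--ii), norm boundedness and reflexivity for the compactness in iii), the duality identification of the limit via Proposition \ref{Prop5} using $\widetilde B\in\Delta_2\cap\nabla_2$, and weak lower semicontinuity of the norm for iv). The only cosmetic remark is that reflexivity is not needed in iv), since the Luxemburg norm is weakly lower semicontinuous in any Banach space.
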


\subsection{Two scale convergence and unfolding}

We start recalling the notion of two-scale convergence in Orlicz setting, see \cite[Definition 4.1]{fotso nnang 2012}.

\begin{definition}\label{twoscaleconv}
	A sequence $\{v_\varepsilon\}_\varepsilon \subset L^B(\Omega)$ is said to
 (weakly) two-scale converge in $L^B(\Omega)$ to some $v_0 \in L^B (\Omega \times Y_{per})$ if
		have 
		\begin{equation*}
			\int_\Omega v_\varepsilon \varphi^\varepsilon dx \to
			\int_{\Omega \times Y} v_0 \varphi dx dy
		\end{equation*}
		for every $\varphi \in L^{\tilde B}(\Omega; C_{per} (Y))$, where $\varphi^\varepsilon$  is defined as $\varphi^\varepsilon(x):= \varphi\left(x, \frac{x}{\varepsilon}\right)$, for every $x \in \Omega$.
\end{definition}
We express the above definition by saying that $v_\varepsilon \to v_0$ in $L^B(\Omega)$ weakly two-scale. 

The following result, which we restate for the readers' convenience, is the Orlicz version of \cite[Proposition 1.19]{CDG3} and is a consequence of it (see Proposition \ref{maintwo}).

\begin{proposition}Let $B$ be an Young function satisfying $\nabla_2$ and $\Delta_2$ conditions. Let $\left\{ w_{\varepsilon }\right\}_\varepsilon$ be a bounded sequence in $%
	L^{B}(\Omega).$ The following assertions are equivalent:
	
	$\left( i\right) \left\{ \mathcal{T}_{\varepsilon }\left( w_{\varepsilon
	}\right) \right\}_\varepsilon $ converges weakly to $w$ in $L^{B}\left( \Omega \times
	Y\right)$.
	
	$\left( ii\right) \left\{ w_{\varepsilon }\right\}_\varepsilon$ weakly two-scale converges to $	w$ in $L^B(\Omega)$.
\end{proposition}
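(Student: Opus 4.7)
The strategy is to exploit the duality formula in Proposition \ref{Prop5} together with a strong convergence property for unfolded test functions. For $(i) \Rightarrow (ii)$, I would fix $\varphi \in L^{\tilde B}(\Omega; C_{per}(Y))$ and set $\varphi^\varepsilon(x) := \varphi(x, x/\varepsilon)$. By u.c.i.\ (Proposition \ref{u.c.i.}), applied to the bounded sequence $\{w_\varepsilon \varphi^\varepsilon\}$ in $L^1(\Omega)$ (bounded by Hölder's inequality in Orlicz spaces, using that $B$ and $\tilde B$ are both $\Delta_2$), I rewrite
\begin{equation*}
\int_\Omega w_\varepsilon \varphi^\varepsilon dx - \frac{1}{|Y|}\int_{\Omega \times Y} \mathcal T_\varepsilon(w_\varepsilon) \mathcal T_\varepsilon(\varphi^\varepsilon) dx dy \to 0.
\end{equation*}
The product identity $\mathcal T_\varepsilon(w_\varepsilon \varphi^\varepsilon) = \mathcal T_\varepsilon(w_\varepsilon) \mathcal T_\varepsilon(\varphi^\varepsilon)$ is used here. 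Then weak convergence of $\mathcal T_\varepsilon(w_\varepsilon)$ to $w$ in $L^B(\Omega \times Y)$ paired with strong convergence $\mathcal T_\varepsilon(\varphi^\varepsilon) \to \varphi$ in $L^{\tilde B}(\Omega \times Y)$ (see below) yields the two-scale limit $\int_{\Omega \times Y} w \varphi dx dy$, which is exactly Definition \ref{twoscaleconv}.

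The implication $(ii) \Rightarrow (i)$ is a compactness plus uniqueness argument. Since $\{w_\varepsilon\}$ is bounded in the reflexive space $L^B(\Omega)$, Theorem \ref{convBunf}(iii) gives that $\{\mathcal T_\varepsilon(w_\varepsilon)\}$ is relatively weakly compact in $L^B(\Omega \times Y)$. Extracting a subsequence with $\mathcal T_\varepsilon(w_\varepsilon) \rightharpoonup \widehat w$ and applying the first implication shows $\{w_\varepsilon\}$ weakly two-scale converges to $\widehat w$. Uniqueness of the two-scale limit (which follows from the density of the test space $L^{\tilde B}(\Omega; C_{per}(Y))$ in $L^{\tilde B}(\Omega \times Y)$, valid thanks to $\tilde B \in \Delta_2$) forces $\widehat w = w$; a standard Urysohn-type argument then promotes the subsequential convergence to the full sequence.

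The main technical obstacle is the strong convergence $\mathcal T_\varepsilon(\varphi^\varepsilon) \to \varphi$ in $L^{\tilde B}(\Omega \times Y)$. A direct computation using $Y$-periodicity gives $\mathcal T_\varepsilon(\varphi^\varepsilon)(x,y) = \varphi(\varepsilon [x/\varepsilon]_Y + \varepsilon y, y)$ on $\widehat \Omega_\varepsilon \times Y$, which is pointwise close to $\varphi(x,y)$ since $\varepsilon [x/\varepsilon]_Y + \varepsilon y \to x$ uniformly. I would first establish this for elementary tensors $\varphi(x,y) = \psi(x)\eta(y)$ with $\psi \in \mathcal D(\Omega)$, $\eta \in C_{per}(Y)$, where uniform continuity of $\psi$ and the modulus-of-continuity estimate recalled before Theorem \ref{convBunf} give the convergence; then extend by density of linear combinations of such tensors in $L^{\tilde B}(\Omega; C_{per}(Y))$, using the continuity estimate \eqref{est1} (applied with $\tilde B$ in place of $B$) to control the approximation uniformly in $\varepsilon$. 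The $\nabla_2$-$\Delta_2$ hypotheses on $B$ are precisely what make this density/continuity machinery available for both $B$ and $\tilde B$.
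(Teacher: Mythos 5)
Your argument is correct in substance, but it follows a genuinely different route from the paper. You give a direct, self-contained proof in the Orlicz setting: for $(i)\Rightarrow(ii)$ you test against $\varphi\in L^{\tilde B}(\Omega;C_{per}(Y))$, unfold the product, and pass to the limit by pairing weak convergence of $\mathcal T_\varepsilon(w_\varepsilon)$ in $L^B(\Omega\times Y)$ with the strong convergence $\mathcal T_\varepsilon(\varphi^\varepsilon)\to\varphi$ in $L^{\tilde B}(\Omega\times Y)$, proved first on tensors $\psi\otimes\eta$ and then by density (this is essentially the classical $L^p$ proof of Cioranescu--Damlamian--Griso transplanted to Orlicz spaces, and it is where the $\Delta_2$/$\nabla_2$ hypotheses enter, through density in $L^{\tilde B}$ and the duality $(L^B)'=L^{\tilde B}$); $(ii)\Rightarrow(i)$ is then compactness via Theorem \ref{convBunf}(iii) plus uniqueness of the two-scale limit and a Urysohn argument. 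The paper instead avoids redoing the duality argument: it uses that $B\in\Delta_2\cap\nabla_2$ yields continuous embeddings $L^p\subseteq L^B\subseteq L^q$ for some $p,q>1$, transfers the statement to the $L^q$ setting where the equivalence is already known (\cite[Proposition 1.19]{CDG3}), and then recovers the $L^B$ convergences from boundedness in $L^B$ (Theorem \ref{convBunf}(iii) and the Orlicz two-scale compactness of \cite{fotso nnang 2012}) together with uniqueness of limits. Your approach buys an intrinsic proof that does not rely on the $L^p$ literature nor on the embedding lemma; the paper's is shorter because it reuses known results. One detail you should repair: Proposition \ref{u.c.i.} does not apply to $\{w_\varepsilon\varphi^\varepsilon\}$ merely because it is bounded in $L^1(\Omega)$ — its hypothesis is $\int_{\Lambda_\varepsilon}|w_\varepsilon\varphi^\varepsilon|\,dx\to 0$ (note also that Proposition \ref{Prop5}, which you cite in your plan, concerns a fixed dual function $v$ and does not cover the $\varepsilon$-dependent $\varphi^\varepsilon$). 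The hypothesis does hold here: with $\Phi(x):=\sup_{y\in Y}|\varphi(x,y)|\in L^{\tilde B}(\Omega)$ one has, by the Orlicz--H\"older inequality,
\begin{equation*}
\int_{\Lambda_\varepsilon}|w_\varepsilon\varphi^\varepsilon|\,dx\;\le\; 2\,\|w_\varepsilon\|_{L^B(\Omega)}\,\|\Phi\,\chi_{\Lambda_\varepsilon}\|_{L^{\tilde B}(\Omega)}\;\to\;0,
\end{equation*}
since $\mathcal L^N(\Lambda_\varepsilon)\to0$ and the $L^{\tilde B}$ norm is absolutely continuous because $\tilde B\in\Delta_2$ (i.e. $B\in\nabla_2$); with this verification inserted, your proof goes through.
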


\begin{proof}
	The proof relies on \cite[Proposition 3.5]{CM} and \cite[Proposition 1.19]{CDG3}. First we recall that each Young function $B$ which satisfies $\nabla_2$ and $\Delta_2$ condition is such that there exists $p, q>1$ $L^p \subseteq L^B \subseteq L^q$ with continuous embeddings.
	
	First we assume that  (i) holds. The above embeddings ensure that $\{\mathcal T_\varepsilon(w_\varepsilon)\}_\varepsilon$  is also bounded in $L^q(\Omega \times Y)$, hence it converges, up to a subsequence, to the same limit $w$ in $L^q(\Omega \times Y)$. By the uniqueness of the limit, the entire sequence converges. By \cite[Proposition 1.19]{CDG3}, it results that $\{w_\varepsilon\}_\varepsilon$ is weakly two-scale convergent to $w$ in $L^q$, but by (iii) in Theorem \ref{convBunf}, the sequence is also bounded in $L^B$, hence by \cite[Theorem 4.1]{fotso nnang 2012}, two-scale weakly converging, up to a subsequence also in $L^B$. By the uniqueness of the weak two-scale limit, this limiting function must be  $w$, and the convergence has to hold for the entire sequence. 
	
	Analogously, assuming that $(ii)$ holds $\{w_\varepsilon\}_\varepsilon$ is bounded in $L^q(\Omega)$, thus, it is also weakly two-scale convergent also in $L^q(\Omega \times Y)$ to the same $w$. By  \cite[Proposition 1.19]{CDG3} $\{\mathcal T_\varepsilon(w_\varepsilon)\}_\varepsilon$  converges weakly to $w$ in $L^q(\Omega \times Y)$. On the other hand, by (iii) in Theorem \ref{convBunf}, the sequence $\{\mathcal T_\varepsilon(w_\varepsilon)\}_\varepsilon$ is also bounded in $L^B(\Omega \times Y)$, hence weakly convergent (up to a subsequence) in $L^{B}(\Omega\times Y)$ to the same limit, which in the end is the limit of the entire sequence.  
\end{proof}
%


\color{black}
\noindent The following result has been proven in \cite[Theorem 4.2]{fotso nnang 2012} (see also \cite[Remark 2]{FTGNZ})
\begin{proposition}\cite{fotso nnang 2012}
	Let $\{u_{\varepsilon}\}_{\varepsilon} \subset W^{1}L^{B}(\Omega)$ be a bounded sequence. Then, there exists a not relabelled subsequence $\{u_{\varepsilon}\}_{\varepsilon}$, such that
	\begin{equation*}
	u_{\varepsilon} \rightharpoonup u_{0} \quad \textup{in} \; W^{1}L^{B}(\Omega)-weak,
	\end{equation*}
	\begin{equation*}
	u_{\varepsilon} \rightharpoonup u_{0} \quad \textup{in} \; L^{B}(\Omega)-\hbox{weakly two scale},
	\end{equation*}
	\begin{equation*}
	\dfrac{\partial u_{\varepsilon}}{\partial x_{i}}	 \rightharpoonup \dfrac{\partial u_{0}}{\partial x_{i}} + \dfrac{\partial u_{1}}{\partial y_{i}} \quad  \;L^{B}(\Omega) \hbox{ weakly two scale }, (1 \leq i \leq N),
	\end{equation*}
	where $u_{0} \in W^{1}L^{B}(\Omega)$ and $u_{1} \in L^{B}(\Omega ; W^{1}_{\#}L^{B}_{per}(Y))$, (where the latter is in the sense of \eqref{OBdef}). 
	Furthermore 
	if $\{u_{\varepsilon}\}_{\varepsilon} \subset  W^{1}_{0}L^{B}(\Omega)$ then the weak limit $u_{0}$ lies in $W^{1}_{0}L^{B}(\Omega)$,
	
\end{proposition}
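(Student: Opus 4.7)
The plan is to combine the reflexivity of $W^{1}L^{B}(\Omega)$, the unfolding characterization of weak two-scale convergence from the preceding proposition, and a De Rham-type argument to identify the structure of the two-scale limit of the gradients. First, since $B\in\Delta_{2}\cap\nabla_{2}$, the space $W^{1}L^{B}(\Omega)$ is reflexive, so along a subsequence $u_{\varepsilon}\rightharpoonup u_{0}$ weakly in $W^{1}L^{B}(\Omega)$ for some $u_{0}\in W^{1}L^{B}(\Omega)$. A Rellich-Kondrachov type compact embedding in the Orlicz-Sobolev setting (valid under $\Delta_{2}\cap\nabla_{2}$) then upgrades this to $u_{\varepsilon}\to u_{0}$ strongly in $L^{B}(\Omega)$. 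Theorem~\ref{convBunf}(ii) gives $\mathcal{T}_{\varepsilon}(u_{\varepsilon})\to u_{0}$ strongly in $L^{B}(\Omega\times Y)$, which through the equivalence of the preceding proposition produces the second bullet of the statement.

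For the gradients, $\{\nabla u_{\varepsilon}\}$ is bounded in $L^{B}(\Omega;\mathbb{R}^{N})$, so Theorem~\ref{convBunf}(iii) provides a further subsequence and some $\widehat{v}\in L^{B}(\Omega\times Y;\mathbb{R}^{N})$ with $\mathcal{T}_{\varepsilon}(\nabla u_{\varepsilon})\rightharpoonup \widehat{v}$, equivalently $\nabla u_{\varepsilon}\to \widehat{v}$ weakly two-scale in $L^{B}(\Omega)$. Applying $\mathcal{M}_{Y}$ and again Theorem~\ref{convBunf}(iii) identifies $\mathcal{M}_{Y}(\widehat{v})=\nabla u_{0}$, so $\widehat{v}$ has zero $y$-mean modulo $\nabla u_{0}$.

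The core of the proof is to recognize $\widehat{v}-\nabla u_{0}$ as a $y$-gradient. For every test field $\Psi\in\mathcal{D}(\Omega;C^{\infty}_{per}(Y;\mathbb{R}^{N}))$ with $\mathrm{div}_{y}\Psi\equiv 0$, set $\Psi^{\varepsilon}(x):=\Psi(x,x/\varepsilon)$. Since $\mathrm{div}(\Psi^{\varepsilon})(x)=(\mathrm{div}_{x}\Psi)(x,x/\varepsilon)+\varepsilon^{-1}(\mathrm{div}_{y}\Psi)(x,x/\varepsilon)=(\mathrm{div}_{x}\Psi)(x,x/\varepsilon)$, integration by parts and passage to the two-scale limit (using the strong convergence of $u_{\varepsilon}$) give
\begin{equation*}
\int_{\Omega\times Y}\bigl(\widehat{v}-\nabla u_{0}\bigr)\cdot\Psi\,dx\,dy=0
\end{equation*}
for every admissible $\Psi$. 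A De Rham/Helmholtz-type decomposition adapted to the Orlicz-Bochner framework then yields $u_{1}\in L^{B}(\Omega;W^{1}_{\#}L^{B}_{per}(Y))$ with $\widehat{v}=\nabla u_{0}+\nabla_{y}u_{1}$, giving the third bullet. Finally, if $\{u_{\varepsilon}\}\subset W^{1}_{0}L^{B}(\Omega)$, the weak closedness of the closed convex subspace $W^{1}_{0}L^{B}(\Omega)$ in $W^{1}L^{B}(\Omega)$ forces $u_{0}\in W^{1}_{0}L^{B}(\Omega)$.

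The main obstacle will be the final identification step: the orthogonality against divergence-free test fields is routine, but converting it into the explicit splitting $\widehat{v}-\nabla u_{0}=\nabla_{y}u_{1}$ with $u_{1}$ in the correct Orlicz-Bochner class \eqref{OBdef} requires a De Rham lemma tailored to Orlicz spaces, and relies crucially on reflexivity, separability, and density of smooth functions granted by $B\in\Delta_{2}\cap\nabla_{2}$; one must also verify that the resulting $u_{1}$ has the $L^{B}$-integrable $y$-gradient required by the definition of $L^{B}(\Omega;W^{1}_{\#}L^{B}_{per}(Y))$, which is controlled by the $L^{B}(\Omega\times Y)$-norm of $\widehat{v}-\nabla u_{0}$.
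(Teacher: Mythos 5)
First, a point of comparison: the paper does not prove this proposition at all — it is imported verbatim from \cite[Theorem 4.2]{fotso nnang 2012} — so your argument can only be measured against the strategy of that reference and against the techniques this paper uses elsewhere. Your outline (reflexivity for the $W^{1}L^{B}$-weak limit, unfolding/two-scale identification of the limit of $u_{\varepsilon}$, divergence-free test fields $\Psi$ with $\operatorname{div}_{y}\Psi=0$, and a De Rham-type identification of $\widehat{v}-\nabla u_{0}$ as a $y$-gradient) is exactly the classical Nguetseng--Allaire scheme that underlies the cited result, and the individual steps you do carry out (the computation $\operatorname{div}(\Psi^{\varepsilon})=(\operatorname{div}_{x}\Psi)^{\varepsilon}$, the identification $\mathcal{M}_{Y}(\widehat{v})=\nabla u_{0}$ via Theorem~\ref{convBunf}(iii), the weak closedness of $W^{1}_{0}L^{B}(\Omega)$) are correct; the compact embedding you invoke is valid for bounded Lipschitz domains, though it is not even needed, since $\nabla_{y}\mathcal{T}_{\varepsilon}(u_{\varepsilon})=\varepsilon\,\mathcal{T}_{\varepsilon}(\nabla u_{\varepsilon})\to 0$ already forces the two-scale limit of $u_{\varepsilon}$ to be $y$-independent and equal to $u_{0}$.

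The genuine weak point is that the decisive step — converting the orthogonality to solenoidal periodic fields into $\widehat{v}-\nabla u_{0}=\nabla_{y}u_{1}$ with $u_{1}$ in the class \eqref{OBdef} — is invoked as an unproved ``De Rham/Helmholtz decomposition adapted to the Orlicz--Bochner framework''; since this is essentially the whole content of the gradient statement, leaving it as a black box leaves the proof incomplete. It is, however, repairable with the same device the paper uses in its proof of the equivalence between unfolding and two-scale convergence: since $B\in\Delta_{2}\cap\nabla_{2}$ gives $L^{p}\subseteq L^{B}\subseteq L^{q}$ on bounded domains for some $p,q>1$, you may localize in $x$ (test with $\phi(x)\psi(y)$, $\phi\in\mathcal{D}(\Omega)$, $\psi$ smooth, periodic, $\operatorname{div}_{y}\psi=0$, running over a countable family to handle the a.e.\ quantifier) and apply the classical periodic $L^{q}$ De Rham lemma on $Y$ to obtain, for a.e.\ $x$, a zero-mean $u_{1}(x,\cdot)$ with $\nabla_{y}u_{1}(x,\cdot)=\widehat{v}(x,\cdot)-\nabla u_{0}(x)$; measurability in $x$ follows because $u_{1}(x,\cdot)$ is the image of $\widehat{v}(x,\cdot)-\nabla u_{0}(x)$ under a fixed linear solution operator (e.g.\ the zero-mean solution of $\Delta_{y}u_{1}=\operatorname{div}_{y}(\widehat{v}-\nabla u_{0})$). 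Then $\nabla_{y}u_{1}=\widehat{v}-\nabla u_{0}\in L^{B}(\Omega\times Y)$ automatically, and the Poincar\'e--Wirtinger inequality on $Y$ upgrades $u_{1}$ itself to the class \eqref{OBdef}, which is the integrability check you correctly flagged. With this substitution for the black-box lemma your proof is complete and consistent with both the cited reference and the paper's own toolkit.
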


\subsection{Properties of the energy densities}

We recall that a function $f:\mathbb R^d\to [0,+\infty)$ is said to be  quasiconvex if for every $\xi \in \mathbb R^{dN}$,

\begin{equation*}
	f\left(\xi\right)\leq \frac{1}{\mathcal L^N(A)}\int_A f(\xi +\nabla \varphi(x))dx
	\end{equation*}
	for every $\varphi \in W^{1,\infty}(A;\mathbb R^d)$.
	A function $f:\Omega \times \mathbb R^{dN}\to [0,+\infty]$ is said to be quasiconvex if
	\begin{equation}\label{tch2}
		f(y,\cdot) \hbox{ is quasiconvex for a.e. } y\in 
	\Omega.
\end{equation}

It is well knonw (see \cite{daco}) that every quasiconvex 
function $f:\mathbb R^d \to \mathbb R$ is separately convex. Furtermore, if $f:\Omega \times \mathbb R^{dN}\to [0,+\infty)$ satisfies \eqref{tch3} for a Young function $B$ of class $\nabla_2\cap \Delta_2$, in \cite{focardi 1} it has been proven that
	\begin{equation}\label{prop2.8}
	\left\vert f\left( x,z_{1}\right) -f\left( x,z_{2}\right) \right\vert \leq
	C\left( 1+b\left( 1+\left\vert z_{1}\right\vert +\left\vert z_{1}\right\vert
	\right) \right) \left\vert z_{1-}z_{2}\right\vert, 
	\end{equation}
with the constant $C$ independent on $x$, and $b:[0,+\infty)\to [0,+\infty)$ is nondecreasing, right continuous and such that
\begin{align*}
	B(t)=\int_0^t b(s)ds,\\
	b(0)=0, b(s)>0 \; s>0, \lim_{s \to +\infty} b(s)=+\infty. 
\end{align*}

The following result has been proved in \cite[Lemma 2.1]{ciora1}

\begin{lemma}\label{ar2}\cite{ciora1} Let $f:\Omega \times \mathbb R^{dN}\to [0,+\infty)$ be such that $f(\cdot, \xi) \in L^1(Y)$ for every $\xi \in \mathbb R^{dN}$,  and assume that it satisfs \eqref{tch1}. Let  $t\in \mathbb R^+$ and let $f_{t}:\mathbb R^{dN}\to [0,+\infty)$ be as in \eqref{ft}, then
 for all \bigskip $\xi\in 
	\mathbb{R}
	^{dN},$ $\lim_{t \to +\infty}f_t(\xi)$ exists and 
	\begin{equation*}
		\underset{t\rightarrow +\infty }{\lim }f_{t}\left( \xi\right) =\underset{h\in 
			\mathbb{N}
		}{\inf }f\left( \xi\right). 
	\end{equation*}
\end{lemma}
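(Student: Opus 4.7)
The plan is to use the standard test-function patching argument for homogenization cell formulas. The quantity $t^N f_t(\xi)$ behaves subadditively modulo boundary errors, so $f_t(\xi)$ has a limit as $t\to\infty$ which must coincide with $\inf_{h\in\mathbb{N}}f_h(\xi)$. The growth hypothesis \eqref{tch3} together with the $Y$-periodicity of $f(\cdot,\xi)$ and $a$ will let us control the shell contributions.

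First I would establish the discrete inequality
\[
f_{kh}(\xi)\le f_h(\xi)\qquad\text{for all }k,h\in\mathbb{N}.
\]
Given $\eta>0$, pick $v_\eta\in W^1L^B_0(hY;\mathbb{R}^d)$ with $\int_{hY}f(y,\xi+\nabla v_\eta)\,dy\le h^N f_h(\xi)+\eta$. Partition $khY$ into the $k^N$ integer translates $\{mh+hY:m\in\{0,\dots,k-1\}^N\}$ and glue by setting $V_\eta(y)=v_\eta(y-mh)$ on the $m$-th piece. Zero boundary trace of $v_\eta$ on $\partial(hY)$ makes $V_\eta\in W^1L^B_0(khY;\mathbb{R}^d)$; and since $h\in\mathbb{N}$, the $Y$-periodicity of $f$ in the first slot gives $\int_{mh+hY}f(y,\xi+\nabla V_\eta)\,dy=\int_{hY}f(y,\xi+\nabla v_\eta)\,dy$ for every $m$. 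Summing, dividing by $(kh)^N$, and letting $\eta\to 0$ yields the claim.

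For a real $t$ I then obtain both asymptotic bounds by comparison with nearby integers. For the upper bound, fix $h_0\in\mathbb{N}$, set $k=\lfloor t/h_0\rfloor$, produce the glued competitor $V_\eta$ on $kh_0 Y$ as above, and extend it by zero to $tY$, producing an element of $W^1L^B_0(tY;\mathbb{R}^d)$. Then
\[
t^N f_t(\xi)\le k^N\bigl(h_0^N f_{h_0}(\xi)+\eta\bigr)+\int_{tY\setminus kh_0 Y}\bigl(a(y)+MB(|\xi|)\bigr)\,dy,
\]
where the shell integral is $O(t^{N-1})$ because the shell has thickness at most $h_0$ and is covered by $O(t^{N-1})$ unit periodicity cells. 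Dividing by $t^N$, using $kh_0/t\to 1$, and sending $t\to\infty$ then $\eta\to 0$ and $\inf_{h_0}$, one gets $\limsup_t f_t(\xi)\le \inf_{h\in\mathbb{N}}f_h(\xi)$. For the matching lower bound, take $h=\lceil t\rceil\in\mathbb{N}$, select $v_\eta\in W^1L^B_0(tY;\mathbb{R}^d)$ near-optimal for $f_t(\xi)$, extend by zero to $hY\supset tY$, and obtain
\[
h^N f_h(\xi)\le t^N f_t(\xi)+\eta+\int_{hY\setminus tY}\bigl(a(y)+MB(|\xi|)\bigr)\,dy=t^N f_t(\xi)+\eta+O(t^{N-1}).
\]
Dividing by $t^N$ and using $h/t\to 1$ gives $\liminf_t f_t(\xi)\ge \liminf_{h\in\mathbb{N},h\to\infty}f_h(\xi)\ge \inf_{h\in\mathbb{N}}f_h(\xi)$, completing the proof.

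The main technical subtlety is the shell estimate $\int_{\text{shell}}a(y)\,dy=O(t^{N-1})$: since $a$ is only $L^1_{per}(Y)$, no pointwise control is available, but the combination of $Y$-periodicity and integer-scale thickness of the shell lets one cover it by $O(t^{N-1})$ unit periodicity cells, each contributing $\|a\|_{L^1(Y)}$. Everything else (the zero-trace extension by zero into a larger cube, the periodic replication across integer translates, and the sandwich $kh_0\le t<(k+1)h_0$) is routine.
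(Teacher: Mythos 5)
Your argument is correct and is essentially the standard replication-plus-boundary-layer proof: the paper itself gives no proof of this lemma (it is quoted from \cite[Lemma 2.1]{ciora1}), and your gluing of near-optimal competitors over integer translates of $hY$, comparison of $t$ with nearby integer scales, and $O(t^{N-1})$ shell estimate is exactly the mechanism used there. The only blemish is that you invoke \eqref{tch3}, which is not among the lemma's hypotheses; it is also unnecessary, since on the shell your competitor has zero gradient, so the integrand there is just $f(y,\xi)$, and the shell contribution is already $O(t^{N-1})\,\|f(\cdot,\xi)\|_{L^{1}(Y)}$ by the assumed $Y$-periodicity, nonnegativity, and $f(\cdot,\xi)\in L^{1}(Y)$, which keeps the proof within the stated assumptions \eqref{tch1}.
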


\subsection{Lower semicontinuity and relaxation results}\label{lscrel}
The following result can be easily deduced by \cite[Theorem 4.7 and Theorem 4.11]{MM}

\begin{proposition}\label{relMM}
	Let $\Omega $ be a bounded open subset of $\mathbb R^N$ and let $f:\Omega \times \mathbb R^{dN}\to [0,+\infty)$ be a Carath\'eodory function such that \eqref{tch3} and \eqref{tch4} (with $a \in L^1_{\rm loc}(\mathbb R^N)$) hold. Let $F: W^{1}L^B\left( \Omega ;%
	\mathbb{R}
	^{d}\right) \to \mathbb R$
	 be the functional defined as %
	 \begin{equation*}
F(w):= \int_{\Omega }f\left( x,\nabla w\left( x\right) \right)
	dx,
	\end{equation*}
	and let
	$\overline F:w \in W^{1}L^B(\Omega;\mathbb R^d)$ be the functionl defined as
	\begin{equation*}
		\overline F(u):=\inf\{\liminf_{h \to +\infty} F(u_h): W^{1}L^{B}(\Omega;\mathbb R^d) \ni u_h \rightharpoonup u \hbox{ in } W^1L^B(\Omega;\mathbb R^d)\}
		\end{equation*}
	then  
	\begin{align*}
		\overline F(u)=\int_\Omega Qf(x,\nabla u(x))dx,
		\end{align*}
	where $Qf(x,\cdot)$ stands for the quasiconvex envelope of $f(x,\cdot)$ in the sense of Morrey, for a.e. $x \in \Omega$,
	i.e.
	$Q f: \Omega \times \mathbb R^{dN} \to \mathbb R$ is such that
	\begin{equation}\label{Qcxf}
	Q f(x,\xi)=\frac{1}{{\mathcal L^N}(A)}\inf\left\{\int_A Qf(x, \xi+ \nabla \varphi(y))d y: \varphi \in C^\infty_0(A)\right\},\end{equation}
	for a.e. $x \in \Omega$ and every $\xi \in \mathbb R^{dN}$,
	equivalently in view of \eqref{tch3} and \eqref{tch4} (see \cite{Dac},\cite{wlo1})
	\begin{equation*}
		Q f(x,\xi)=\frac{1}{{\mathcal L^N}(A)}\inf\left\{\int_A Qf(x, \xi+ \nabla \varphi(y))d y: \varphi \in W^{1}L^B_0(A)\right\},\end{equation*}
\end{proposition}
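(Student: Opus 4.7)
The plan is to split the proof into the two standard inequalities that characterize the relaxed functional $\overline F$, each reduced to one of the two cited results from \cite{MM}.

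For the lower bound $\overline F(u)\geq \int_\Omega Qf(x,\nabla u(x))\,dx$, I would first observe that $Qf$ is a Carath\'eodory function on $\Omega\times \mathbb R^{dN}$: measurability in $x$ comes from representing $Qf(\cdot,\xi)$ as an infimum over a countable dense family of test perturbations (by continuity in $\xi$), while $Qf(x,\cdot)$ is continuous because \eqref{tch3} and the argument of \cite{focardi 1} (cf. \eqref{prop2.8}) transfer to the quasiconvex envelope. Moreover, by construction $Qf(x,\cdot)$ is quasiconvex and still satisfies $B(|\xi|)\leq Qf(x,\xi)\leq a(x)+MB(|\xi|)$ (the lower bound is inherited because $\xi\mapsto B(|\xi|)$ is convex, hence quasiconvex). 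Therefore the functional $G(u):=\int_\Omega Qf(x,\nabla u)\,dx$ falls under the scope of the Orlicz-Sobolev lower semicontinuity result \cite[Theorem 4.7]{MM}, giving its sequential weak lower semicontinuity on $W^1L^B(\Omega;\mathbb R^d)$. Since $G\leq F$ pointwise and $\overline F$ is the largest weakly l.s.c.\ functional dominated by $F$, we obtain $\overline F\geq G$.

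For the upper bound $\overline F(u)\leq \int_\Omega Qf(x,\nabla u)\,dx$, I would invoke \cite[Theorem 4.11]{MM}, which in the Orlicz-Sobolev framework provides, for each $u\in W^1L^B(\Omega;\mathbb R^d)$, a recovery sequence $\{u_h\}\subset W^1L^B(\Omega;\mathbb R^d)$ with $u_h\rightharpoonup u$ in $W^1L^B$ and $F(u_h)\to \int_\Omega Qf(x,\nabla u)\,dx$. The standard construction is: reduce to piecewise affine $u$ by density (valid under $B\in\Delta_2\cap \nabla_2$, which ensures reflexivity of $W^1L^B$ and density of regular functions); on each affine piece, use the defining formula \eqref{Qcxf} of $Qf$ to pick near-minimizers $\varphi_k\in C^\infty_0$; rescale via $x\mapsto x/\eta_k$ and patch through a Vitali covering to produce oscillating correctors with gradients bounded in $L^B$. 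The equivalence between the $C^\infty_0$-based definition \eqref{Qcxf} and the one with $W^1L^B_0$-test functions follows from the density of $\mathcal D(A)$ in $W^1_0L^B(A)$ together with the Lipschitz-type estimate \eqref{prop2.8}: any $W^1L^B_0$ minimizer can be approximated by smooth test functions with negligible change in $\int f(x,\xi+\nabla\varphi)\,dy$.

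The main obstacle is the upper bound, and specifically keeping the rescaled correctors uniformly bounded in $L^B$ norm. Unlike the polynomial case, where $|\nabla \varphi_k|^p$ scales cleanly under dilation, here one needs the $\Delta_2$ condition to absorb the factor produced by rescaling and the $\nabla_2$ condition to obtain the weak compactness required by the Vitali-type gluing; both are already in the hypotheses, and they are precisely what \cite[Theorem 4.11]{MM} exploits. Once these two inequalities are combined, the identification $\overline F(u)=\int_\Omega Qf(x,\nabla u)\,dx$ follows.
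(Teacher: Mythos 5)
Your proposal is correct and follows essentially the same route as the paper, which proves this proposition simply by appealing to \cite[Theorem 4.7 and Theorem 4.11]{MM} (lower semicontinuity of the quasiconvexified functional for the lower bound, relaxation/recovery for the upper bound), with the growth and Carath\'eodory properties of $Qf$ and the equivalence of the two envelope formulas handled exactly as you indicate (cf.\ Remark \ref{remqcxf} and the references \cite{Dac}, \cite{wlo1}). Your additional details on the recovery-sequence construction and on the role of $\Delta_2\cap\nabla_2$ merely flesh out what the cited theorems already provide.
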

\begin{remark}
	\label{remqcxf}
	We observe that \cite[proof of Proposition 4.9 and Theorem 4.11]{MM} ensure that
	$Qf(x,\xi)$ is a Carath\'eodory function satisfying \eqref{tch3} and \eqref{tch4}, which is periodic in the first variable in view of  \eqref{Qcxf}.
\end{remark}

The following result is well known, hence its proof is omitted.
 
\begin{lemma}\label{lem1} Let $\Omega \in \mathcal A_0$ and let $f:\Omega \times \mathbb R^{dN}\to [0,+\infty)$, such that \eqref{tch1}, \eqref{tch3} and \eqref{tch4} hold. Let $\left\{ \nu
	_{h}\right\} \subseteq \left] 0,+\infty \right[ ,$ nondecreasing and
	diverging then for all  $w\in W^{1}L^B\left( \Omega ;%
	\mathbb{R}
	^d\right) ,$   
	\begin{equation*}
	\begin{array}{l}
	\inf \left\{ \underset{h\rightarrow \infty }{\lim \inf }\int_{\Omega
	}f\left( \nu _{h}x,\nabla u_{h}\right) dx:\left\{ u_{h}\right\} \subseteq
	W^{1}L^B\left( \Omega ;
	\mathbb{R}
	^{dN}\right) ,u_{h}\rightarrow u\text{ in }L^{B}\left( \Omega ;%
	\mathbb{R}
	^{d}\right) \right\} \\ 
	 =\inf \left\{ \underset{h\rightarrow \infty }{\lim \inf 
	}\int_{\Omega }f\left( \nu _{h}x,\nabla u_{h}\right) dx:\left\{
	u_{h}\right\} \subseteq u+W^{1}L^B_0\left( \Omega ;%
	\mathbb{R}
	^{dN}\right) ,u_{h}\rightarrow u\text{ in }L^{B}\left( \Omega ;%
	\mathbb{R}
	^{d}\right) \right\} .
	\end{array}
	\end{equation*}
\end{lemma}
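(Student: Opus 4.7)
The inequality $\geq$ is immediate from the inclusion of admissible classes, so only the reverse direction deserves attention. My plan is a De~Giorgi slicing (Fonseca--M\"uller cut-off) scheme adapted to the Orlicz setting: starting from a sequence $\{u_h\}$ that nearly attains the first infimum, I will modify each $u_h$ in a thin neighbourhood of $\partial\Omega$ so that its difference from $u$ lies in $W^{1}_{0}L^{B}$, while paying only a small price in energy thanks to averaging over many cut-off layers.

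Fix $\eta>0$ and $\{u_h\}\subset W^1L^B(\Omega;\mathbb R^d)$ with $u_h\to u$ in $L^B$ and $\ell:=\liminf_h\int_\Omega f(\nu_h x,\nabla u_h)\,dx$ within $\eta$ of the first infimum; by \eqref{tch4} we may assume $\sup_h\int_\Omega B(|\nabla u_h|)\,dx<\infty$. For small $L>0$ and an integer $k\ge 1$, set $\Omega_L:=\{x\in\Omega:\mathrm{dist}(x,\partial\Omega)>L\}$, and slice the boundary strip $\Omega\setminus\Omega_L$ into $k$ disjoint layers $A^k_i$ of width $L/k$. Choose cut-offs $\varphi^k_i\in C_c^\infty(\Omega;[0,1])$ equal to $1$ on $\Omega_L\cup\bigcup_{j>i}A^k_j$ and to $0$ on $\bigcup_{j<i}A^k_j$ together with a neighbourhood of $\partial\Omega$, with $|\nabla\varphi^k_i|\leq Ck/L$. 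Set $v^k_{h,i}:=u+\varphi^k_i(u_h-u)$, which satisfies $v^k_{h,i}-u\in W^{1}_{0}L^{B}(\Omega;\mathbb R^d)$ and $|v^k_{h,i}-u|\leq|u_h-u|$.

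On the transition layer $A^k_i$ one has $|\nabla v^k_{h,i}|\leq|\nabla u_h|+|\nabla u|+(Ck/L)|u_h-u|$; convexity of $B$ together with $\Delta_2$ (iterated a bounded number of times) gives $B(|\nabla v^k_{h,i}|)\leq \widetilde C\bigl[B(|\nabla u_h|)+B(|\nabla u|)+B((Ck/L)|u_h-u|)\bigr]$ for $\widetilde C$ independent of $h,i,k,L$. Splitting $\Omega$ into $\{\varphi^k_i=1\}$, $\{\varphi^k_i=0\}$ and $A^k_i$, using \eqref{tch3} on the last two pieces and averaging over $i=0,\ldots,k-1$, one locates an index $i(h,k)$ for which
\begin{align*}
\int_\Omega f(\nu_h x,\nabla v^k_{h,i(h,k)})\,dx &\leq \int_\Omega f(\nu_h x,\nabla u_h)\,dx + \int_{\Omega\setminus\Omega_L}\bigl[a(\nu_h x)+MB(|\nabla u|)\bigr]dx \\
&\quad + \frac{1}{k}\int_{\Omega\setminus\Omega_L}\bigl[a(\nu_h x)+M\widetilde C\bigl(B(|\nabla u_h|)+B(|\nabla u|)+B((Ck/L)|u_h-u|)\bigr)\bigr]dx.
\end{align*}

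Taking $\liminf_{h\to\infty}$ at fixed $k,L$: the Riemann--Lebesgue property for periodic $L^1$ functions gives $\int_E a(\nu_h x)dx\to |E|\dashint_Y a\,dy$ for every measurable $E$; $\int_\Omega B(|\nabla u_h|)dx$ is uniformly bounded; and $\int B((Ck/L)|u_h-u|)dx\to 0$ follows from $u_h\to u$ in $L^B$ by first absorbing the constant $Ck/L$ via iteration of $\Delta_2$ and then invoking the $\Delta_2$-driven equivalence between norm and modular convergence in $L^B$. We therefore obtain
\begin{align*}
\liminf_{h\to\infty}\int_\Omega f(\nu_h x,\nabla v^k_{h,i(h,k)})\,dx\leq \ell + \int_{\Omega\setminus\Omega_L}\Bigl[\dashint_Y a\,dy+MB(|\nabla u|)\Bigr]dx + \frac{C_L}{k},
\end{align*}
with $C_L$ independent of $k$. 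Sending $k\to\infty$ and then $L\to 0$ kills both remainders by absolute continuity of the Lebesgue integral (since $a\in L^1_{per}(Y)$ and $B(|\nabla u|)\in L^1(\Omega)$); a standard diagonal extraction over $(L,k,h)$ then yields a single sequence $\{w_h\}\subset u+W^{1}_{0}L^{B}(\Omega;\mathbb R^d)$ with $w_h\to u$ in $L^B$ (using $|w_h-u|\leq|u_h-u|$) and $\liminf_h\int_\Omega f(\nu_h x,\nabla w_h)dx\leq \ell+\eta$, proving $\leq$ by the arbitrariness of $\eta$. The main obstacle is exactly this Orlicz modular bookkeeping: unlike the $L^p$ setting where the scaling $(Ck/L)^p$ is a harmless constant factor, here one has to apply $\Delta_2$ twice---first to dominate $B((Ck/L)t)$ by a (large) constant times $B(t)$, and then to translate strong $L^B$ convergence of $u_h-u$ into modular convergence---so that the slicing remainder genuinely vanishes when the limits are taken in the order $h\to\infty$, $k\to\infty$, $L\to 0$.
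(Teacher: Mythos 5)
Your argument is correct, and in fact the paper gives no proof of Lemma \ref{lem1} at all (it is dismissed as ``well known''), so there is nothing to compare against: the De Giorgi/Fonseca--M\"uller slicing with cut-offs $v^k_{h,i}=u+\varphi^k_i(u_h-u)$, the modular estimate via convexity and $\Delta_2$, the Riemann--Lebesgue limit for $a(\nu_h\cdot)$, and the use of $\Delta_2$ to convert norm convergence $u_h\to u$ in $L^B$ into $\int_\Omega B\bigl(\tfrac{Ck}{L}|u_h-u|\bigr)dx\to 0$ is exactly the standard proof, correctly transplanted to the Orlicz setting.

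Two small bookkeeping remarks. First, ``by \eqref{tch4} we may assume $\sup_h\int_\Omega B(|\nabla u_h|)dx<\infty$'' holds only along a subsequence realizing the $\liminf$; this is harmless, but you should say that you work along that subsequence and complete the competitor by setting it equal to $u$ for the remaining indices, which can only decrease the $\liminf$. Second, the final diagonal extraction over $(L,k,h)$ is unnecessary (and, as stated, would need an Attouch-type argument to preserve the $\liminf$): for each fixed $(k,L)$ the sequence $h\mapsto v^{k}_{h,i(h,k)}$ is already admissible for the right-hand infimum, so that infimum is bounded by $\ell+R(L)+C/k$ for every $k,L$, and one simply lets $k\to\infty$, $L\to0$ in this inequality. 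Also, since the paper's $\Delta_2$ is assumed only at infinity, the estimate $B(|\nabla v^k_{h,i}|)\leq \widetilde C\bigl[B(|\nabla u_h|)+B(|\nabla u|)+B((Ck/L)|u_h-u|)\bigr]$ should carry an additive constant coming from the region where the argument is below $t_0$; it is integrated over the layers and divided by $k$, so it disappears in the limit, but it is worth recording.
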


\section{Proof of main result}\label{secmain}

We start this subsection with a crucial observation. In view of  \cite[Proposition 6.11]{DM}, and Remark \ref{remqcxf}, $Qf(x,\xi)$ satisfies \eqref{tch1}-\eqref{tch4} and \eqref{tch2}.
Furthermore, it is easily seen from Theorem \ref{main1} and \eqref{fhom} that the formula defining $f_{hom}$ does not change if one replaces $f(x,\cdot)$ by $Qf(x,\cdot)$.
Thus, without loss of generality in this subsection we can assume that $f$ satisfies \eqref{tch2}.
It is worth to underline that our set of assumptions allows us to consider growth conditions modeled through $B(\xi):= |\xi|^p$, with $p>1$, hence Theorem \ref{main1} extends \cite[Theorem 2.5]{ciora1}.

\bigskip 

The following result, whose proof is presented for the readers' convenience, is an Orlicz-Sobolev version of \cite[Lemma 2.2]{ciora1}.

\begin{lemma}\label{ar1}
Let $\Omega \in \mathcal{A}%
_{0}$, $t>0$. Assume that $f$ satisfy \eqref{tch1}-\eqref{tch4} and \eqref{tch2} and let $f_{t}$ be as in \eqref{ft}, then 
\begin{align*}
\int_{\Omega }f_{t}\left( \nabla u\right) dx
=\frac{1}{t^{n}}\inf \left\{ \int_{\Omega \times tY}f\left( y,\nabla
u\left( x\right) +\nabla _{y}V\left( x,y\right) \right) dxdy:V\in
L_{D_{0y}}^{B}\left( \Omega \times tY;\mathbb R^d\right) \right\}  
\end{align*}
for all $u\in W^{1}L^B(\Omega ;\mathbb R^d) $,
where
\begin{align*}L_{D_{0y}}^{B}( \Omega \times tY;\mathbb R^d) = 
\left\{ V\in L^{B}( \Omega \times tY;\mathbb R^d) :V\left( x,\cdot\right) \in W^1L^{B}_0\left( tY;\mathbb R^d\right) \hbox{ for a.e. }x\in \Omega \right\} .
\end{align*}
\end{lemma}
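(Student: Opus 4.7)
\medskip
\noindent\textbf{Proof plan.} The plan is to prove the two inequalities separately. The direction $\geq$ is the routine one. For every $V \in L^B_{D_{0y}}(\Omega \times tY;\mathbb R^d)$, Fubini's theorem gives
\begin{equation*}
\int_{\Omega \times tY} f(y, \nabla u(x) + \nabla_y V(x,y))\, dx\, dy = \int_\Omega \left(\int_{tY} f(y, \nabla u(x) + \nabla_y V(x,\cdot))\, dy\right) dx,
\end{equation*}
and since for a.e.\ $x \in \Omega$ the function $y \mapsto V(x,y)$ is admissible in the definition \eqref{ft} of $f_t(\nabla u(x))$, the inner integral is bounded below by $t^N f_t(\nabla u(x))$. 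Dividing by $t^N$ and taking the infimum over $V$ yields the bound $\int_\Omega f_t(\nabla u)\, dx \leq (\text{infimum on the r.h.s.})$.

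For the opposite inequality, the plan is to build a near-optimal $V$ by a gluing procedure based on a piecewise-constant approximation of $\nabla u$. First, I would establish that $f_t:\mathbb R^{dN} \to [0,+\infty)$ is continuous: since $f$ is assumed quasiconvex in $\xi$ (as justified in the opening observation of Section \ref{secmain}), estimate \eqref{prop2.8} transfers to the cell-formula $f_t$, exploiting that the admissible $v$ in \eqref{ft} can be chosen with controlled $\|\nabla v\|_B$. Next, given $\eta > 0$, pick a simple function $G_\eta = \sum_{i=1}^k \xi_i \chi_{E_i}$ with the $E_i$'s a measurable partition of $\Omega$, such that $\|G_\eta - \nabla u\|_{L^B} < \eta$, $G_\eta \to \nabla u$ a.e., and $\int_\Omega |f_t(G_\eta) - f_t(\nabla u)|\, dx < \eta$. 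For each $i$, by definition \eqref{ft}, pick $v_i \in W^1L^B_0(tY;\mathbb R^d)$ with
\begin{equation*}
\int_{tY} f(y, \xi_i + \nabla v_i(y))\, dy \leq t^N f_t(\xi_i) + \eta/|\Omega|,
\end{equation*}
and define $V_\eta(x,y) := \sum_i v_i(y)\chi_{E_i}(x)$. Clearly $V_\eta \in L^B_{D_{0y}}(\Omega \times tY;\mathbb R^d)$, and summing over $i$ gives
\begin{equation*}
\int_{\Omega \times tY} f(y, G_\eta + \nabla_y V_\eta)\, dx\, dy \leq t^N \int_\Omega f_t(G_\eta)\, dx + \eta \leq t^N \int_\Omega f_t(\nabla u)\, dx + (t^N + 1)\eta.
\end{equation*}

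The main obstacle is passing from $G_\eta$ back to $\nabla u$ inside the integral: we need to control
\begin{equation*}
\int_{\Omega \times tY} \bigl|f(y, \nabla u + \nabla_y V_\eta) - f(y, G_\eta + \nabla_y V_\eta)\bigr|\, dx\, dy.
\end{equation*}
By \eqref{prop2.8} this is bounded by $C\int (1 + b(1 + |\nabla u + \nabla_y V_\eta| + |G_\eta + \nabla_y V_\eta|)) |\nabla u - G_\eta|\, dxdy$. Here the $\Delta_2\cap\nabla_2$ conditions on $B$ are essential: Young's inequality $b(s)r \leq \widetilde B(b(s)) + B(r)$ together with $\widetilde B(b(s)) \leq CB(s)$ (a consequence of $\Delta_2$) turns this into a sum of integrals of $B(|\nabla u|)$, $B(|G_\eta|)$ and $B(|\nabla_y V_\eta|)$ multiplied by factors that vanish with $\eta$, provided one has equi-integrability of $B(|\nabla_y V_\eta|)$. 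The latter is guaranteed by choosing the minimizers $v_i$ so that $\int_{tY} B(|\nabla v_i|)\, dy$ is controlled by $f_t(\xi_i)$ through \eqref{tch4}; since $f_t(G_\eta) \to f_t(\nabla u)$ in $L^1(\Omega)$, the sequence $B(|\nabla_y V_\eta|)$ is equi-integrable on $\Omega \times tY$. Combining these estimates, the error term tends to $0$ as $\eta \to 0$, and letting $\eta \to 0$ in the inequality above proves the converse bound, finishing the proof.
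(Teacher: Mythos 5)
Your proposal is correct in substance, but it proves the nontrivial inequality by a genuinely different route than the paper. The paper's proof does not approximate at all: it shows that for each fixed $\xi$ the infimum in \eqref{ft} is attained (coercivity \eqref{tch4} plus the weak lower semicontinuity coming from quasiconvexity, via Proposition \ref{relMM}), defines the multifunction $\Gamma(\xi)$ of exact minimizers, checks that $\Gamma$ has weakly closed, nonempty values and measurable inverse images of closed balls (using separability of $W^1L^B_0(tY;\mathbb R^d)$), and then applies Castaing's selection theorem to produce a measurable selection $\sigma$ and set $U(x,\cdot)=\sigma(\nabla u(x))$; growth \eqref{tch3} and summability of $f_t(\nabla u)$ give $U\in L^{B}_{D_{0y}}$ and the reverse inequality follows with an exact optimal competitor, with no continuity estimate on $f$ or $f_t$ needed beyond what enters the lower semicontinuity. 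Your argument instead replaces $\nabla u$ by a simple function $G_\eta$, glues near-minimizers $v_i$ into $V_\eta(x,y)=\sum_i v_i(y)\chi_{E_i}(x)$, and transfers back to $\nabla u$ through the local Lipschitz-type bound \eqref{prop2.8}; this avoids measurable selection and does not require attainment of the cell infimum, but it leans on \eqref{prop2.8} (hence again on quasiconvexity \eqref{tch2}, which is where separate convexity enters) and on continuity of $f_t$ and density of simple functions in $L^B$ under $\Delta_2$ --- all available here, so the trade is legitimate. Two small remarks on your write-up: with the $\varepsilon$-splitting $b(s)r\le \widetilde B(b(s))+B(r)$ and $\widetilde B(b(s))\le CB(s)$ you only need \emph{uniform boundedness} of $\int_{\Omega\times tY}B(|G_\eta+\nabla_y V_\eta|)\,dx\,dy$ (which follows from \eqref{tch4} and the near-minimality, since $\int_\Omega f_t(G_\eta)\,dx$ stays bounded), not equi-integrability of $B(|\nabla_y V_\eta|)$; and your labelling of the ``routine'' direction as ``$\geq$'' is inconsistent with the inequality you then derive ($\int_\Omega f_t(\nabla u)\,dx\le$ infimum), though the computation itself is the right one.
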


\begin{proof}
For every $V \in L_{D_{0y}}^{B}( \Omega \times tY;\mathbb R^d)$, it results that $\nabla_y V(x,y)\in L^{B}\left( \Omega \times tY;%
\mathbb{R}
^{d}\right) $ for a.e $x\in \Omega,$
hence, by \eqref{ft}, we have
\begin{align*}
f_{t}\left( \nabla u\right) = &\frac{1}{t^{n}}\inf \left\{ \int_{tY}f\left( y,\nabla u\left( x\right)
	+\nabla v\left( y\right) \right) dy;v\in W^1L^B_0\left( tY;\mathbb R^d\right) \right\} \\
\leq &\frac{1}{t^{n}}\int_{tY}f\left( y,\nabla u\left( x\right) +\nabla
_{y}V\left( x,y\right) \right) dy 
\end{align*}
which leads to 
\begin{align*}
\int_{\Omega }f_{t}\left( \nabla u\right) dx
\leq\frac{1}{t^{n}}\inf \left\{ \int_{\Omega \times tY}f\left( y,\nabla u\left(
x\right) +\nabla _{y}V\left( x,y\right) \right) dxdy:V\in
L_{D_{0y}}^{B}\left( \Omega \times tY;
\mathbb R ^d\right) \right\}  
\end{align*}
To prove the reverse inequality, we assume, without loss of generality, that the left hand side of above inequality
is finite. Set $X=W^1L^B_0\left( tY;\mathbb R^d\right) $. Note that it is a metric space and define the multifunction 
 \begin{align*}\Gamma :\xi\in 
\mathbb{R}
^{Nd}\mapsto \left\{ v\in W^1L^B_0\left( tY;\mathbb R^d\right) :\frac{1}{t^{n}}\int_{tY}f\left( y,\xi+\nabla _{y}v\left( y\right)
\right) dy=f_{t}\left( \xi\right) \right\} .
\end{align*} By \eqref{tch4} and the lower semicontinuity properties ensured by Proposition \ref{relMM}, $\Gamma \left( \xi\right) $ is not empty and weakly closed. Let $F$ be a strongly closed set in $W^1L^B_0\left( tY;%
\mathbb{R}^d\right) $ hence $\Gamma \left( F\right) ^{-}=\left\{ \varsigma \in 
\mathbb{R}
^{dN}:\Gamma \left( \varsigma \right) \cap F\neq \emptyset \right\} \in 
\mathcal{B}\left(
\mathbb{R}
^{dN}\right) .$
Indeed, if $\Gamma \left( F\right) ^{-}=\emptyset $, clearly it is in $%
\mathcal{B}\left(
\mathbb{R}
^{dN}\right) .$

\noindent

The proof follows the same lines as \cite[Lemma 2.2]{ciora1}, by an application of Castaing's selection lemma (see \cite[Theorem 1.1]{ciora1}).
In particular, by the separability of $W^1L^B_0(ty;\mathbb R^d)$, it suffices to restrict to the case in which $F$ is closed ball and prove that $\Gamma \left( F\right) ^{-}$
is closed. Indeed every closed subset of $W^1L^B_0(ty;\mathbb R^d)$ can be written as countable intersection of closed balls and consewuently $\Gamma^-(F)$ will be a closed intersection of closed sets.

 To see that $\Gamma \left( F\right) ^{-}$
 is closed, let $\left\{ \xi_{s}\right\} \subseteq \Gamma \left(
F\right) ^{-}$ converging to $\xi$ and $v_{s}\in \Gamma \left( \xi_{s}\right)
\cap F.$ $F$ being a closed ball, $\left\{ v_{s}\right\} $ is bounded in $X$
hence there is a subsequence $\left\{ v_{s_{k}}\right\} $\ of $\left\{
v_{s}\right\} $ and  $v_{\infty }\in F$ (note that $F$ as ball is
convex and weakly closed) such that $v_{s_{k}}%
\rightharpoonup v_{\infty }$ in $X.$ 

\noindent We deduce, using definition of infimum and the continuity properties, that:
 \begin{align*}f_{t}\left( \xi\right) &\leq 
\frac{1}{t^{n}}\int_{tY}f\left( y,\xi+\nabla _{y}v_{\infty }\left( y\right)
\right) dy 
\leq \underset{s\rightarrow \infty }{\lim \inf }\frac{1}{t^{n}}
\int_{tY}f\left( y,\xi_{s}+\nabla_{y}v_{s_{k}}\left( y\right) \right) dy \\
&\leq \underset{s\rightarrow \infty }{\lim \sup }f_{t}\left(
\xi_{s_{k}}\right) \leq f_{t}\left( \xi\right),
\end{align*} 
that is $v_{\infty }\in \Gamma
\left( z\right) \cap F$ and $z\in \Gamma \left( F\right) ^{-}$ hence it is
closed then in $\mathcal{B}\left( 
\mathbb{R}
^{dN}\right).$
It follows, by Castaing's selection theorem, that $\Gamma $\ has
a measurable selection $\sigma .$ For a.e. $x\in \Omega $ we set $U\left(
x\right) =\sigma \left( \nabla u\left( x\right) \right).$ In particular, $U$ is 
Lebesgue measurable on $\Omega $ with  
\begin{align*}f_{t}\left( \nabla u\left( x\right)
\right) =\frac{1}{t^{n}}\int_{tY}f\left( y,\nabla u\left( x\right) +\nabla
_{y}U\left( x,y\right) \right) dy.
\end{align*} Since $f_{t}\left( \nabla u\right) $ is
summable, the previous equality, together with (\ref{tch1}), implies $%
\nabla _{y}U\in L^{B}\left( \Omega \times tY;\mathbb R
^{dN}\right) .$
Consequently $U\in L_{D_{0y}}^{B}\left( \Omega \times tY;
\mathbb{R}
^{dN}\right) $ with,%
\begin{align*}
&\int_{\Omega }f_{t}\left( \nabla u\left( x\right) \right) dx  =\frac{1}{t^{n}}\int_{\Omega \times tY}f\left( y,\nabla u\left( x\right)
+\nabla _{y}U\left( x,y\right) \right) dxdy \\ 
&\geq \frac{1}{t^{n}}\inf \left\{ \int_{\Omega \times tY}f\left( y,\nabla
u\left( x\right) +\nabla _{y}V\left( x,y\right) \right) dxdy:V\in
L_{D_{0y}}^{B}\left( \Omega \times tY;
\mathbb{R}
^{d}\right) \right\} ,
\end{align*}
ending the proof.
\end{proof}

As in \cite{ciora1} from lemmas \ref{ar1}, \ref{ar2},  (\ref{tch3})
and the Lebesgue dominated convergence theorem, the following result holds.

\begin{proposition}\label{ar3}
Let  $f$ be satisfying \ref{tch1}-\eqref{tch4}, let $f_{hom}$ be as in \eqref{fhom} and let $B$ be a Young function $B$ of class $\Delta_{2},$ let $\Omega \in 
\mathcal{A}_{0}$ and $u\in W^{1}L^B\left( \Omega ; \mathbb R^d
\right) $, them
\begin{align*}
\exists \lim_{t \to \infty} \frac{1}{t^{n}}\inf \left\{
\int_{\Omega \times tY}f\left( y,\nabla u\left( x\right) +\nabla _{y}V\left(
x,y\right) \right) dxdy:V\in L_{D_{0y}}^{B}\left( \Omega \times tY;%
\mathbb{R}
^{d}\right) \right\}  \\ 
=\int_{\Omega }f_{\hom }\left( \nabla u\left( x\right) \right) dx
\end{align*}
\end{proposition}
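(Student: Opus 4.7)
The plan is to combine Lemma \ref{ar1} with Lemma \ref{ar2} via Lebesgue's dominated convergence theorem. By Lemma \ref{ar1}, for every $t > 0$ the infimum appearing in the statement coincides with $\int_\Omega f_t(\nabla u(x))\,dx$, so the question reduces to showing that this quantity converges, as $t \to +\infty$, to $\int_\Omega f_{hom}(\nabla u(x))\,dx$.

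Pointwise convergence of the integrand comes for free from Lemma \ref{ar2}: applied with $\xi = \nabla u(x)$, it yields $f_t(\nabla u(x)) \to f_{hom}(\nabla u(x))$ for a.e. $x \in \Omega$. To exchange limit and integral I construct a $t$-independent integrable majorant. Testing the infimum defining $f_t$ against the zero competitor and invoking \eqref{tch3} gives
$$f_t(\xi) \;\leq\; \frac{1}{t^N}\int_{tY} a(y)\,dy \;+\; M\,B(|\xi|).$$
By the $Y$-periodicity of $a \in L^{1}_{per}(Y)$, covering $tY$ by at most $([t]+1)^N$ translated unit cells yields $t^{-N}\int_{tY} a(y)\,dy \leq 2^N \|a\|_{L^1(Y)}$ for all $t \geq 1$. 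Since $B \in \Delta_2$ and $u \in W^{1}L^{B}(\Omega;\mathbb{R}^d)$ force $B(|\nabla u|) \in L^{1}(\Omega)$, the family $\{f_t(\nabla u)\}_{t \geq 1}$ is dominated by a fixed $L^{1}(\Omega)$ function independent of $t$.

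The dominated convergence theorem then provides
$$\lim_{t\to +\infty} \int_\Omega f_t(\nabla u(x))\,dx = \int_\Omega f_{hom}(\nabla u(x))\,dx,$$
and substituting Lemma \ref{ar1} on the left-hand side produces the claimed identity. I do not anticipate genuine obstacles: all ingredients are in place, and the only routine point of care is the uniform-in-$t$ control of $t^{-N}\int_{tY} a(y)\,dy$, which is a direct consequence of the $Y$-periodicity of $a$.
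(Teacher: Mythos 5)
Your argument is precisely the paper's own: Lemma \ref{ar1} identifies the infimum with $\int_\Omega f_t(\nabla u(x))\,dx$, Lemma \ref{ar2} gives the pointwise convergence $f_t(\nabla u(x))\to f_{\hom}(\nabla u(x))$, and the zero-competitor estimate from \eqref{tch3}, combined with the $Y$-periodicity of $a$ and the fact that $B\in\Delta_2$ makes $B(|\nabla u|)$ integrable, yields the $t$-uniform $L^1$ majorant needed for dominated convergence. The only point worth noting is that Lemma \ref{ar1} is stated under the extra hypothesis \eqref{tch2}, which the paper assumes without loss of generality throughout this section (replacing $f$ by $Qf$), so your use of it matches the paper's standing reduction.
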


\begin{remark}
Under the assumptions of Proposition \ref{ar3} the following equivalent formulalae for the integral of $f_{hom}$ hold:
 \begin{align*}
\int_{\Omega }f_{\hom }\left( \nabla u\left( x\right) \right) dx \\ 
=\underset{t\rightarrow \infty }{\lim }\frac{1}{t^{n}}\inf \left\{
\int_{\Omega \times tY}f\left( y,\nabla u\left( x\right) +\nabla _{y}V\left(
x,y\right) \right) dxdy:V\in L_{D_{0y}}^{B}\left( \Omega \times tY;\mathbb R^d
\right) \right\}  \\ 
=\underset{t\rightarrow \infty }{\lim }\inf \left\{ \int_{\Omega \times
Y}f\left( ty,\nabla u\left( x\right) +\nabla _{y}V\left( x,y\right) \right)
dxdy:V\in L_{D_{0y}}^{B}\left( \Omega \times Y;\mathbb R^d\right) \right\} \\ 
=\underset{h\in 
\mathbb{N}
}{\inf }\inf \left\{ \int_{\Omega \times tY}f\left( hy,\nabla u\left(
x\right) +\nabla _{y}V\left( x,y\right) \right) dxdy:V\in
L_{D_{0y}}^{B}\left( \Omega \times Y;\mathbb R^d\right) \right\} .
\end{align*}
\end{remark}

For the readers'convenience we restate our main homogenization result

\begin{theorem}\label{ta1}
	Let $B$ be a Young function satisfying $\nabla_2$ and $\Delta_2$  conditions, let  $\Omega \in \mathcal{A}_{0}$ and $u\in W^{1}L^B\left( \Omega ;
\mathbb{R}^{dN}\right) $. Let $\{\varepsilon\}$ be a family of positive numbers converging to $0$.
Assume that $f$ satisfies \eqref{tch1}-\eqref{tch4}, then for every sequence $\{\varepsilon_h\}\subset \{\varepsilon\}$, Then:%
\begin{align*}
&\int_{\Omega }f_{\hom }\left( \nabla u\right) dx\\ 
&=\inf \left\{ \underset{h\rightarrow \infty }{\lim \inf }\int_{\Omega
}f\left( \frac{x}{\varepsilon _{h}},\nabla u_{h}\right) dx:\left\{
u_{h}\right\} \subseteq W^{1}L^B\left( \Omega; \mathbb R^{dN}\right) ,u_{h}\rightarrow u\text{ in }L^{B}\left( \Omega ;\mathbb R^{d}\right) \right\}  \\ 
&=\inf \left\{ \underset{h\rightarrow \infty }{\lim \sup }\int_{\Omega
}f\left( \frac{x}{\varepsilon _{h}},\nabla u_{h}\right) dx:\left\{
u_{h}\right\} \subseteq W^{1}L^B\left( \Omega ;
\mathbb R^{dN}\right) ,u_{h}\rightarrow u\text{ in }L^{B}\left( \Omega ;
\mathbb R^{d}\right) \right\}  \\ 
&=\inf \left\{ \underset{h\rightarrow \infty }{\lim \inf }\int_{\Omega
}f\left( \frac{x}{\varepsilon _{h}},\nabla u_{h}\right) dx:\left\{
u_{h}\right\} \subseteq u+W^1L^B_0\left( \Omega ;\mathbb R^{dN}\right) ,u_{h}\rightarrow u\text{ in }L^{B}\left( \Omega ;\mathbb R^{d}\right) \right\}  \\ 
&=\inf \left\{ \underset{h\rightarrow \infty }{\lim \sup }\int_{\Omega
}f\left( \frac{x}{\varepsilon _{h}},\nabla u_{h}\right) dx:\left\{
u_{h}\right\} \subseteq u+W^1L^B_0\left( \Omega ;
\mathbb R^{dN}\right) ,u_{h}\rightarrow u\text{ in }L^{B}\left( \Omega ;%
\mathbb R^{d}\right) \right\}.
\end{align*}%
The proof is split in many intermediate steps as follow.
\end{theorem}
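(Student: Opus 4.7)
The plan is to reduce the four-way equality to a lower bound plus a recovery sequence construction and then to invoke the unfolding machinery of Section \ref{secpre} together with the cell-problem representation of $f_{hom}$ in Proposition \ref{ar3}. First I would use the opening observation of this section to replace $f$ by its quasiconvex envelope $Qf$ without changing $f_{hom}$, so that \eqref{tch2} is in force; this unlocks the quasiconvex lower semicontinuity of Proposition \ref{relMM}. Lemma \ref{lem1} (with $\nu_h = 1/\varepsilon_h$) matches the two liminf infima, while the inclusion $u+W^1L^B_0 \subseteq \{u_h \to u \text{ in } L^B\}$ and the chain $\liminf \le \limsup$ glue together the four candidate values. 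It therefore suffices to prove
\begin{equation*}
\int_\Omega f_{hom}(\nabla u) dx \le \liminf_h \int_\Omega f\bigl(x/\varepsilon_h,\nabla u_h\bigr) dx \quad \text{for every } u_h \to u \text{ in } L^B,
\end{equation*}
and to produce a recovery sequence $u_h \in u + W^1_0 L^B(\Omega;\mathbb{R}^d)$ with $u_h \to u$ in $L^B$ and $\limsup_h \int_\Omega f(x/\varepsilon_h,\nabla u_h) dx \le \int_\Omega f_{hom}(\nabla u) dx$.

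For the lower bound I would first extract, using coercivity \eqref{tch4} and the $\Delta_2$ property, a subsequence along which $\{\nabla u_h\}$ is bounded in $L^B$, hence $\{u_h\}$ is bounded in $W^1L^B(\Omega;\mathbb{R}^d)$ and weakly converges to $u$. Proposition \ref{maintwo} then delivers a further subsequence and $V \in L^B(\Omega; W^1_\# L^B_{per}(Y))$ with $\mathcal{T}_{\varepsilon_h}(\nabla u_h) \rightharpoonup \nabla u + \nabla_y V$ weakly in $L^B(\Omega\times Y;\mathbb{R}^{dN})$. Applying Proposition \ref{uciO} to $w_h := f(\cdot/\varepsilon_h,\nabla u_h)$ (whose integral over $\Lambda_{\varepsilon_h}$ vanishes thanks to \eqref{tch3} and the uniform $L^B$-bound on $\nabla u_h$), together with the pointwise identity $\mathcal{T}_{\varepsilon_h}(f(\cdot/\varepsilon_h,\nabla u_h))(x,y) = f(y,\mathcal{T}_{\varepsilon_h}(\nabla u_h)(x,y))$, transports the integral to $\Omega\times Y$. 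Quasiconvex weak lower semicontinuity for the Carathéodory integrand $(x,y,\xi)\mapsto f(y,\xi)$ on $\Omega\times Y$ (Proposition \ref{relMM}) and Fubini then yield
\begin{equation*}
\liminf_h \int_\Omega f\bigl(x/\varepsilon_h,\nabla u_h\bigr) dx \ge \int_\Omega \int_Y f\bigl(y,\nabla u(x) + \nabla_y V(x,y)\bigr) dy\, dx,
\end{equation*}
and for a.e.\ $x$ the slice $V(x,\cdot)$ is an admissible periodic corrector, so the inner integral is $\ge f_{hom}(\nabla u(x))$. The step passing from a periodic competitor to the Dirichlet cell formula \eqref{ft} relies on the quasiconvexity of $f_{hom}$ and a cut-off/reperiodization that is well-defined in the Orlicz-Sobolev setting thanks to $B \in \Delta_2 \cap \nabla_2$.

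For the upper bound, fix $\delta > 0$. Proposition \ref{ar3} furnishes, for $t$ large, a near-optimal $V_t \in L^B_{D_{0y}}(\Omega\times tY;\mathbb{R}^d)$ with
\begin{equation*}
\tfrac{1}{t^N} \int_{\Omega\times tY} f\bigl(y,\nabla u(x) + \nabla_y V_t(x,y)\bigr) dx\, dy \le \int_\Omega f_{hom}(\nabla u) dx + \delta.
\end{equation*}
Extending $V_t(x,\cdot)$ by $tY$-periodicity and setting $u_{t,\varepsilon}(x) := u(x) + \varepsilon V_t(x,x/\varepsilon)$ produces a family strongly converging to $u$ in $L^B$ whose gradient equals $\nabla u(x) + (\nabla_y V_t)(x,x/\varepsilon)$ plus an $O(\varepsilon)$ remainder, negligible in $L^B$ by the $\Delta_2$ property. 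Reversing the unfolding via Proposition \ref{uciO} and rescaling $tY \to Y$ yields $\lim_{\varepsilon\to 0}\int_\Omega f(x/\varepsilon,\nabla u_{t,\varepsilon}) dx = \tfrac{1}{t^N}\int_{\Omega\times tY} f(y,\nabla u+\nabla_y V_t) dx\, dy$. A diagonal argument in $(t,\varepsilon)$ then closes the bound on the free problem; a De Giorgi slicing near $\partial\Omega$ finally enforces $u_h \in u + W^1_0L^B$, the extra boundary-layer cost being absorbed via \eqref{tch3} and the equi-integrability provided by $B\in \Delta_2\cap \nabla_2$.

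The hard part is the recovery construction: the two-scale ansatz has to be combined with a cut-off that both restores the Dirichlet datum and keeps the energy asymptotically unchanged, and in the Orlicz setting this slicing must exploit the modular/norm equivalence granted by $\Delta_2\cap\nabla_2$ rather than the usual $L^p$-estimates. A secondary subtlety, used at the end of the lower bound, is the equivalence between the Dirichlet and periodic formulations of the cell problem for the quasiconvex $f_{hom}$ in Orlicz-Sobolev spaces.
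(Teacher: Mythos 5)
Your reductions (passing to $Qf$, Lemma \ref{lem1}, the ordering of the four infima) and the general shape of the upper bound are consistent with the paper, but your lower bound rests on a step that is not only unjustified by the quoted tools but actually false in general. From $\mathcal{T}_{\varepsilon_h}(\nabla u_h)\rightharpoonup \nabla u+\nabla_y V$ weakly in $L^{B}(\Omega\times Y;\mathbb R^{dN})$ (Proposition \ref{maintwo}) you infer
$\liminf_h\int_\Omega f\left(\tfrac{x}{\varepsilon_h},\nabla u_h\right)dx\ \ge\ \int_{\Omega\times Y} f\left(y,\nabla u(x)+\nabla_y V(x,y)\right)dx\,dy$
by invoking ``quasiconvex weak lower semicontinuity'' via Proposition \ref{relMM}. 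That proposition concerns relaxation of $w\mapsto\int_\Omega f(x,\nabla w)\,dx$ along sequences of gradients weakly converging in $W^{1}L^{B}(\Omega;\mathbb R^{d})$; it gives no lower semicontinuity of $W\mapsto\int_{\Omega\times Y}f(y,W)\,dx\,dy$ along sequences that converge only weakly in $L^{B}(\Omega\times Y)$ and are merely slice-wise (rescaled) gradients in $y$. Worse: since $V(x,\cdot)\in W^{1}_{\#}L^{B}(Y;\mathbb R^{d})$, your inequality would force $\liminf_h\int_\Omega f(x/\varepsilon_h,\nabla u_h)\,dx\ \ge\ \int_\Omega f_{cell}(\nabla u)\,dx$, where $f_{cell}(\xi)$ is the one-cell periodic infimum $\inf\{\int_Y f(y,\xi+\nabla v)\,dy:\ v\in W^{1}_{\#}L^{B}(Y;\mathbb R^{d})\}$. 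Combined with the upper bound $\le\int_\Omega f_{hom}(\nabla u)\,dx$ this would give $f_{hom}\ge f_{cell}$, contradicting the well-known fact (M\"uller's example, which is precisely the reason $f_{hom}$ is defined through the asymptotic formula \eqref{ft}--\eqref{fhom} and not through a single cell) that for non-convex, even quasiconvex, integrands $f_{hom}$ can be strictly smaller than $f_{cell}$. Such two-scale/unfolded lower semicontinuity is available only under convexity of $f(y,\cdot)$, so this route cannot work once convexity is dropped.

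The paper's lower bound is structured to avoid exactly this trap: after reducing to competitors in $u+W^{1}L^{B}_0$ (Lemma \ref{lem1}), it localizes on the small cubes $Y_{\nu,j}$ covering $\Omega$, adjusts the boundary data cube by cube, and applies the unfolding operator at the \emph{fixed} scale $1/\nu$, so that for a.e.\ $x$ the unfolded competitor belongs to $W^{1}L^{B}_0(Y;\mathbb R^{d})$ and is therefore directly admissible in the Dirichlet cell problems; one then passes to the limit in $h$, lets $\nu\to\infty$ using the estimate \eqref{prop2.8} to replace $\mathcal T_{1/\nu}(\nabla u)$ by $\nabla u$, and concludes with Proposition \ref{ar3} (this is the content of Lemmas \ref{lemma2.8} and \ref{lem3.6}); no lower semicontinuity on $\Omega\times Y$ is ever invoked. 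Two further points in your argument would also need repair: the ansatz $u(x)+\varepsilon V_t(x,x/\varepsilon)$ is not a well-defined element of $W^{1}L^{B}$, since $V_t$ is only measurable in $x$ and $\nabla_x V_t$ does not exist, so one must first approximate $V_t$ by functions piecewise constant (or smooth) in $x$, as in the argument of \cite{ciora1} reproduced in Lemma \ref{lem3.2}; and the unfolding criterion for integrals applied to $w_h=f(\cdot/\varepsilon_h,\nabla u_h)$ requires $\int_{\Lambda_{\varepsilon_h}}|w_h|\,dx\to0$, which does not follow from \eqref{tch3} and a mere $L^1$ bound on $B(|\nabla u_h|)$ without equi-integrability.
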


Firstly we observe that the exact same arguments used in the proof of \cite[Lemma 2.6]{ ciora1} guarantee the validity of the following result
\begin{lemma}\label{lem3.2}
Under the same assumptions of Theorem \ref{ta1} and recalling that \eqref{tch2} holds, the inequality%
\begin{align*}
\inf \left\{ \underset{h\rightarrow \infty }{\lim \sup }\int_{\Omega
}f\left( \frac{x}{\varepsilon _{h}},\nabla u_{h}\right) dx:\left\{
u_{h}\right\} \subseteq W^{1}L^B\left( \Omega ;\mathbb R^{dN}\right) ,u_{h}\rightarrow u\text{ in }L^{B}\left( \Omega ;\mathbb R^{d}\right) \right\} \\ 
\leq \underset{k\in \mathbb{N}
}{\inf }\frac{1}{k^{n}}\inf \left\{ \int_{\Omega \times kY}f\left( y,\nabla
u\left( x\right) +\nabla _{y}V\left( x,y\right) \right) dxdy,\right.\\
\left.V\in L^{B}( \Omega \times tY;\mathbb R^d): V(x,\cdot) \in
W^1L^{B}_{per}\left( \Omega \times kY;
\mathbb{R}
^{d}\right) \hbox{ for a.e. }x \in \Omega\right\},
\end{align*}
follows.
\end{lemma}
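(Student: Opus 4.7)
Fix $k\in\mathbb N$ and an admissible $V\in L^B(\Omega\times kY;\mathbb R^d)$ with $V(x,\cdot)\in W^{1}L^B_{per}(kY;\mathbb R^d)$ for a.e. $x\in\Omega$. Since the right-hand side is an infimum over $k$ and over such $V$'s, it suffices to exhibit a sequence $\{u_h\}\subset W^{1}L^B(\Omega;\mathbb R^{dN})$ with $u_h\to u$ in $L^B(\Omega;\mathbb R^d)$ satisfying
\[
\limsup_{h\to\infty}\int_{\Omega} f\!\left(\tfrac{x}{\varepsilon_h},\nabla u_h\right)dx\;\le\;\frac{1}{k^n}\int_{\Omega\times kY}f\bigl(y,\nabla u(x)+\nabla_y V(x,y)\bigr)\,dx\,dy,
\]
and then pass to the infimum.

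Exploiting $B\in\Delta_2\cap\nabla_2$ and the density of smooth compactly supported functions in the relevant Orlicz--Sobolev space, approximate $V$ by $V^{\delta}\in C^\infty_c(\Omega;C^\infty_{per}(kY;\mathbb R^d))$ so that $V^\delta\to V$ and $\nabla_y V^\delta\to\nabla_y V$ in $L^B(\Omega\times kY;\mathbb R^{dN})$ as $\delta\to 0$. Set
\[
u_h^\delta(x):=u(x)+\varepsilon_h V^\delta\!\left(x,\tfrac{x}{\varepsilon_h}\right),
\]
so that $u_h^\delta\to u$ in $L^B(\Omega;\mathbb R^d)$ (since $V^\delta$ is bounded) and
\[
\nabla u_h^\delta(x)=\nabla u(x)+(\nabla_y V^\delta)\!\left(x,\tfrac{x}{\varepsilon_h}\right)+\varepsilon_h(\nabla_x V^\delta)\!\left(x,\tfrac{x}{\varepsilon_h}\right).
\]
The last term is uniformly small, so the Lipschitz-type inequality \eqref{prop2.8} combined with H\"older's inequality in Orlicz spaces (valid under $\Delta_2\cap\nabla_2$) allows to absorb it in the limit:
\[
\int_\Omega\Bigl|f\!\left(\tfrac{x}{\varepsilon_h},\nabla u_h^\delta\right)-f\!\left(\tfrac{x}{\varepsilon_h},\nabla u+\nabla_y V^\delta(\cdot,\cdot/\varepsilon_h)\right)\Bigr|dx\xrightarrow[h\to\infty]{}0.
\]

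Next, observe that $g^\delta(x,y):=f(y,\nabla u(x)+\nabla_y V^\delta(x,y))$ is $kY$-periodic in $y$ (by $kY$-periodicity of $V^\delta(x,\cdot)$ and $Y$-periodicity of $f(\cdot,\xi)$) and satisfies the growth \eqref{tch3}. Performing the unfolding at scale $k\varepsilon_h$ (equivalently, the change of variables $y\mapsto ky$), using the unfolding criterion for integrals in Proposition \ref{u.c.i.} and the strong $L^B$-convergence of $\mathcal T_{k\varepsilon_h}$ granted by Theorem \ref{convBunf}(i)--(ii), one obtains
\[
\int_{\Omega}f\!\left(\tfrac{x}{\varepsilon_h},\nabla u+\nabla_y V^\delta(\cdot,\cdot/\varepsilon_h)\right)dx\;\longrightarrow\;\frac{1}{k^n}\int_{\Omega\times kY}g^\delta(x,y)\,dx\,dy,
\]
exactly as in \cite[Lemma 2.6]{ciora1}. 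A diagonal extraction letting first $h\to\infty$ and then $\delta\to 0$, again via \eqref{prop2.8} and $\nabla_y V^\delta\to\nabla_y V$ in $L^B$, produces the required recovery sequence and yields the claimed inequality.

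The main obstacle is that Orlicz growth is weaker than polynomial: the continuity of the superposition $w\mapsto f(\cdot/\varepsilon_h,w)$ on $L^B$ and the control of the boundary layer $\Lambda_{\varepsilon_h}$ cannot be obtained from H\"older-type bounds as in the $L^p$ setting, and have to be re-derived from \eqref{prop2.8} and the Orlicz unfolding criterion (Proposition \ref{uciO}); this is precisely where the simultaneous $\Delta_2$ and $\nabla_2$ assumptions on $B$ play a role.
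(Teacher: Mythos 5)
Your argument is correct and is essentially the same as the one the paper relies on: the paper proves this lemma by invoking the proof of Lemma 2.6 in the cited work of Cioranescu, Damlamian and De Arcangelis, which is precisely your scheme of oscillating recovery sequences $u+\varepsilon_h V^{\delta}(x,x/\varepsilon_h)$ for smooth approximations of an admissible $V$, with the limit energy computed via unfolding (Theorem \ref{convBunf}, Propositions \ref{u.c.i.}--\ref{uciO}) and the error terms controlled through the Lipschitz-type estimate \eqref{prop2.8}, legitimate here since \eqref{tch2} is assumed. The only remarks are that the final diagonalization is not strictly needed (one may pass to the infimum over the left-hand side for each fixed $\delta$ and then let $\delta\to 0$), and that the density step should be justified under $\Delta_2\cap\nabla_2$ as you indicate; neither affects correctness.
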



We observe that since $W^1L^B_0\left( tY;
\mathbb{R}
^{d}\right) \subseteq W^{1}L^{B}_{per}\left( tY;
\mathbb{R}
^{d}\right) $ the lemma is valid for also for $V\in L_{D_{0y}}^{B}\left( \Omega
\times kY;
\mathbb{R}
^{d}\right) .$


The exact same arguments as in \cite{ciora1}, allows us to deduce the following result, which is a consequence of  lemmas  \ref{ar1}, \ref{ar2}, \ref{ar3} and the dominated convergence theorem, taking into account that \eqref{tch3} entails that $f_t$ in \eqref{ft} satisties the following
\begin{equation*}
f_t(\xi)\leq \int_Y a(ty) dy + B(|\xi|),\end{equation*}
for every $t>0$ and $\xi \in \mathbb R^{dN}$.

\begin{proposition}\label{prop2.3}
	Let $f$ satisfy \eqref{tch1}-\eqref{tch4} and \eqref{tch3} for a given Young function $B$ satisfying $\nabla_2$ and $\Delta_2$ conditions. 
Then, for every $\Omega \in \mathcal{A}_{0}$ and $u\in W^{1}L^B\left( \Omega ;\mathbb{R}
^{d}\right) $ we have:%
\begin{align*}
\underset{t\rightarrow \infty }{\lim }\frac{1}{t^{n}}\inf \left\{
\int_{\Omega \times tY}f\left( y,\nabla u\left( x\right) +\nabla _{y}V\left(
x,y\right) \right) dy:V\in L_{D_{0y}}^{B}\left( \Omega \times tY;
\mathbb{R}^d\right) \right\}  \\ 
=\int_{\Omega }f_{\hom }\left( \nabla u\right) dx. \\ 
\end{align*}
\end{proposition}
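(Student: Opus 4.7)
The plan is to assemble the proof from Lemmas \ref{ar1} and \ref{ar2} together with Lebesgue's dominated convergence theorem, following exactly the hint in the remark immediately before the statement.

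First, I would apply Lemma \ref{ar1} to identify, for each fixed $t > 0$,
\begin{equation*}
\frac{1}{t^n}\inf\left\{\int_{\Omega \times tY} f(y,\nabla u(x) + \nabla_y V(x,y))\, dxdy : V \in L_{D_{0y}}^{B}(\Omega \times tY;\mathbb{R}^d)\right\} = \int_\Omega f_t(\nabla u(x))\, dx
\end{equation*}
(reading the lone $dy$ in the statement as the typographical counterpart of $dxdy$). This reduces the claim to proving $\lim_{t \to \infty} \int_\Omega f_t(\nabla u)\, dx = \int_\Omega f_{\hom}(\nabla u)\, dx$.

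Second, Lemma \ref{ar2} yields $f_t(\xi) \to f_{\hom}(\xi)$ as $t \to \infty$ for every $\xi \in \mathbb{R}^{dN}$; specialised at $\xi = \nabla u(x)$ for a.e.\ $x \in \Omega$, this delivers the pointwise integrand convergence. The passage from pointwise to integral convergence is then handled by Lebesgue's dominated convergence theorem via the uniform-in-$t$ bound
\begin{equation*}
f_t(\xi) \leq \int_Y a(ty)\, dy + B(|\xi|)
\end{equation*}
recorded just before the proposition. The dominant lies in $L^1(\Omega)$ for two reasons: the $B$-term because $\nabla u \in L^B(\Omega;\mathbb{R}^{dN})$ together with $B \in \Delta_2$ forces $B(|\nabla u|) \in L^1(\Omega)$; the $a$-term because, $a$ being $Y$-periodic and in $L^1_{per}(Y)$, the scalar $\int_Y a(ty)\, dy$ is uniformly bounded in $t$ (in fact it converges to $\int_Y a(y)\, dy$ by standard averaging of oscillating periodic integrands, and, for integer $t$, periodicity makes the equality exact).

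The main, and really only, delicate point is the uniform-in-$t$ control of the $a$-term and the verification that the dominant is $x$-integrable; once that is in hand, dominated convergence gives $\int_\Omega f_t(\nabla u)\, dx \to \int_\Omega f_{\hom}(\nabla u)\, dx$, which combined with the identity produced by Lemma \ref{ar1} closes the argument.
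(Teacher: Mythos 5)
Your proposal is correct and follows essentially the same route as the paper, which deduces the proposition from Lemma \ref{ar1} (rewriting the infimum as $\int_{\Omega} f_{t}(\nabla u)\,dx$), Lemma \ref{ar2} (the pointwise limit $f_{t}(\xi)\to f_{\hom}(\xi)$), and dominated convergence based on the bound $f_{t}(\xi)\leq \int_{Y} a(ty)\,dy + B(|\xi|)$, with the $a$-term uniformly controlled by periodicity and $B(|\nabla u|)\in L^{1}(\Omega)$ thanks to $B\in\Delta_{2}$. The only implicit point, which the paper also handles globally (via the reduction to $Qf$ at the start of Section \ref{secmain}), is that Lemma \ref{ar1} is invoked under the standing assumption \eqref{tch2}.
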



\begin{lemma}\label{lemma2.8}
Let $f$ satisfy \eqref{tch1}-\eqref{tch4} and \eqref{tch3} for a given Young function $B$ of class $\triangle _{2}\cap \nabla_2$. 
Then, for every $\Omega \in \mathcal{A}_{0}$ and $u\in W^{1}L^B\left( \Omega ;\mathbb{R}
^{d}\right) $, and for every $\varepsilon_h \to 0$, we have: 
\begin{align}\label{eqtoest}
\underset{\nu \in 
	\mathbb{N}
}{\sup }\inf \left\{ \underset{h\rightarrow \infty }{\lim \inf }\int_{\Omega
}f\left( \nu hx,\nabla u+\nabla v_{h}\right) dx:\left\{ v_{h}\right\}
\subseteq W^1L^B_0\left( \Omega ;
\mathbb{R}
^{d}\right) ,v_{h}\rightarrow 0\text{ in }L^{B}\left( \Omega;
\mathbb{R}
^{d}\right) \right\}\\
 \leq 
\inf \left\{ \underset{h\rightarrow \infty }{\lim \inf }\int_{\Omega
}f\left( \frac{x}{\varepsilon _{h}},\nabla u+\nabla u_{h}\right) dx:\left\{
u_{h}\right\} \subseteq W^1L^B_0\left( \Omega ;
\mathbb{R}
^{d}\right) ,u_{h}\rightarrow 0\text{ in }L^{B}\left( \Omega ;%
\mathbb{R}
^{d}\right) \right\} \nonumber
\end{align}
\end{lemma}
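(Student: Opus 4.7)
Fix $\nu\in\mathbb N$. To prove the $\leq$-inequality it suffices to show that, for this fixed $\nu$, the $\nu$-indexed infimum on the left is bounded by the right-hand quantity $L$. We may assume $L<+\infty$ and pick a competitor $\{u_h\}\subset W^1L^B_0(\Omega;\mathbb R^d)$ with $u_h\to 0$ in $L^B(\Omega;\mathbb R^d)$ and, up to a not relabelled subsequence, $\lim_h\int_\Omega f(x/\varepsilon_h,\nabla u+\nabla u_h)\,dx=L$. Coercivity \eqref{tch4} combined with $B\in\Delta_2\cap\nabla_2$ yield that $\{u+u_h\}$ is bounded in $W^1L^B(\Omega;\mathbb R^d)$ and weakly converges to $u$. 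The strategy is to extract a two-scale profile out of $\{u_h\}$ and use it to build, at the prescribed rate $\nu h$, a recovery sequence whose energy is controlled by $L$.

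\textbf{Step 1 (Two-scale profile and lower bound for $L$).} Apply Proposition \ref{maintwo} to $\{u+u_h\}$: up to a further subsequence, $\mathcal T_{\varepsilon_h}(\nabla u+\nabla u_h)\rightharpoonup \nabla u+\nabla_y u_1$ weakly in $L^B(\Omega\times Y_{per};\mathbb R^{dN})$ for some $u_1\in L^B(\Omega;W^1_\#L^B(Y))$. By the unfolding criterion for integrals in the Orlicz setting (Proposition \ref{uciO}), whose hypothesis is granted by \eqref{tch3} together with the $L^B$-equi-integrability forced by $B\in\Delta_2\cap\nabla_2$,
\[
L=\lim_h \tfrac{1}{|Y|}\int_{\Omega\times Y} f\bigl(y,\mathcal T_{\varepsilon_h}(\nabla u+\nabla u_h)\bigr)\,dxdy.
\]
Next, exploiting the quasiconvexity of $f(y,\cdot)$ (assumed WLOG via Remark \ref{remqcxf}) and Proposition \ref{relMM}, the functional $\xi\mapsto\int_{\Omega\times Y}f(y,\xi(x,y))\,dxdy$ is sequentially weakly lower semicontinuous along admissible two-scale gradients, so
\[
\tfrac{1}{|Y|}\int_{\Omega\times Y} f(y,\nabla u+\nabla_y u_1)\,dxdy\leq L.
\]

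\textbf{Step 2 (Recovery at rate $\nu h$).} Since $B\in\Delta_2$, smooth tensor products $\mathcal D(\Omega)\otimes\mathcal C^\infty_{per}(Y)$ are dense in $L^B(\Omega;W^1_\#L^B(Y))$; pick $w_\eta$ approximating $u_1$. For each $\eta$, set
\[
v_h^\eta(x):=\tfrac{1}{\nu h}\,\varphi_h(x)\,w_\eta(x,\nu h\,x),
\]
with $\varphi_h\in\mathcal D(\Omega)$ a De Giorgi slicing cutoff designed so that $v_h^\eta\in W^1L^B_0(\Omega;\mathbb R^d)$ and the corresponding boundary layer has vanishing $B$-energy. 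Then $v_h^\eta\to 0$ in $L^B$ and $\nabla v_h^\eta=\varphi_h(x)\,\nabla_y w_\eta(x,\nu h x)+o(1)$ in $L^B(\Omega;\mathbb R^{dN})$. A second application of Proposition \ref{uciO} at rate $1/(\nu h)$ gives
\[
\lim_h\int_\Omega f(\nu h x,\nabla u+\nabla v_h^\eta)\,dx=\tfrac{1}{|Y|}\int_{\Omega\times Y} f\bigl(y,\nabla u+\nabla_y w_\eta(x,y)\bigr)dxdy.
\]
Letting $\eta\to 0$ and extracting a diagonal sequence $\{v_h\}:=\{v_h^{\eta(h)}\}$, the right-hand side converges to $\tfrac{1}{|Y|}\int_{\Omega\times Y}f(y,\nabla u+\nabla_y u_1)\,dxdy$, which is $\leq L$ by Step 1. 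Thus $\{v_h\}$ is admissible on the left and $\liminf_h\int_\Omega f(\nu h x,\nabla u+\nabla v_h)\,dx\leq L$; taking the infimum over $\nu h$-competitors and then $\sup_\nu$ concludes the lemma.

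\textbf{Main obstacle.} The technically hardest point is Step 2: designing the cutoff $\varphi_h$ so that $v_h^\eta$ has zero trace, the boundary layer has vanishing $B$-energy, and the unfolded energy at rate $1/(\nu h)$ still matches the two-scale value. Classical $L^p$ slicing à la De Giorgi must be replaced by its Orlicz counterpart; $\Delta_2\cap\nabla_2$ is used crucially to guarantee uniform absolute continuity of $\int B(|\nabla v_h^\eta|)$ and thereby enable the diagonal passage $\eta\to 0$. The density of smooth tensor products in $L^B(\Omega;W^1_\#L^B(Y))$ likewise rests on $\Delta_2$, as does the two-scale lower-semicontinuity step, which depends on the equivalence between weak unfolding convergence and weak two-scale convergence established after Definition \ref{twoscaleconv}, together with the quasiconvex relaxation of Proposition \ref{relMM}.
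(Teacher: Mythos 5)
Your architecture (two-scale compactness for $\{u+u_h\}$, a lower bound for $L$ by the unfolded energy, then a recovery sequence oscillating at the integer rate $\nu h$) is genuinely different from the paper's argument, but it breaks down at its crux, Step 1. The inequality $\tfrac{1}{|Y|}\int_{\Omega\times Y} f(y,\nabla u+\nabla_y u_1)\,dxdy\leq L$ requires sequential weak lower semicontinuity of $\Phi\mapsto\int_{\Omega\times Y}f(y,\Phi)\,dxdy$ along the weakly converging unfolded fields $\mathcal T_{\varepsilon_h}(\nabla u+\nabla u_h)$, with $f(y,\cdot)$ only quasiconvex. This is not supplied by Proposition \ref{relMM}, which concerns functionals of $\nabla w(x)$ under weak $W^{1}L^{B}(\Omega)$ convergence (a single-scale gradient structure in $x$), nor by quasiconvexity alone: the fields $\mathcal T_{\varepsilon_h}(\nabla u+\nabla u_h)(x,\cdot)$ are $y$-gradients only fiberwise, and joint weak convergence in $L^{B}(\Omega\times Y)$ gives no fiberwise weak convergence to which Morrey-type lsc could be applied. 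In fact this ``two-scale quasiconvex lower bound'' is essentially the hard part of the homogenization lower bound that Lemma \ref{lemma2.8} (together with Lemma \ref{lem3.6}) is designed to circumvent, so invoking it here is unsupported and close to circular. A secondary inaccuracy in the same step: $B\in\Delta_2\cap\nabla_2$ does not force equi-integrability of $B(|\nabla u_h|)$, so the u.c.i. hypothesis of Proposition \ref{uciO} is not ``granted''; the claimed equality for $L$ is unjustified (only the one-sided inequality $\tfrac{1}{|Y|}\int_{\Omega\times Y}f(y,\mathcal T_{\varepsilon_h}(\cdot))\,dxdy\leq \int_\Omega f(x/\varepsilon_h,\cdot)\,dx+o(1)$, which follows from $f\geq 0$ and \eqref{tch3} on $\Lambda_{\varepsilon_h}$, is available and would suffice if the lsc step were true).

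By contrast, the paper proves the lemma without any two-scale limit or lsc statement: it dilates the competitors, setting $\theta_j=\nu\varepsilon_{h_j}\left[\tfrac{1}{\nu\varepsilon_{h_j}}\right]$ and $v_j=\tfrac{1}{\theta_j}u_{h_j}(\theta_j\cdot)$ on a slightly larger $\Omega'$, so that after the change of variables the oscillation frequency becomes exactly $\nu n_j$ with $n_j$ an integer; the error caused by replacing $\nabla u(\theta_j x)$ with $\nabla u(x)$ is controlled by the Lipschitz-type estimate \eqref{prop2.8} and the $\Delta_2$ condition, and the sequence $w_h$ ($=v_j$ for $h=n_j$, $0$ otherwise) is directly admissible on the left-hand side; one then lets $\Omega'\searrow\Omega$, uses Lemma \ref{lem1} and takes the supremum over $\nu$. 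If you want to keep your route, you would have to actually prove the two-scale lower semicontinuity for quasiconvex Orlicz-growth integrands (and also justify the Riemann--Lebesgue limit in Step 2 for Carath\'eodory $f$ with $\nabla u$ only in $L^{B}$, via \eqref{prop2.8} and an approximation of $\nabla u$); as written, Step 1 is a missing theorem, not a missing detail.
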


\begin{proof}
Let $\left\{ u_{h}\right\} \subseteq W^1L^B_0\left( \Omega ;%
\mathbb{R}^{d}\right) ,u_{h}\rightarrow 0$ in $L^{B}\left( \Omega ;%
\mathbb{R}
^{d}\right).$ 
Without loss of generality assume that $\underset{h\rightarrow \infty }{\lim \inf }%
\int_{\Omega }f\left( \frac{x}{\varepsilon _{h}},\nabla u+\nabla
u_{h}\right) dx<+\infty .$ Choose $\Omega ^{\prime }\in \mathcal{A}_{0}$ such
that $\overline{\Omega }\subseteq \Omega ^{\prime }$, and, with an abuse of notation denote by $u_{h}$ and $u$ both the functions and their extensions as $0$ in $\Omega'$. By \eqref{tch3}, it follows that:%
\begin{equation}\label{eq3}
\begin{tabular}{l}
$\int_{\Omega ^{\prime }}f\left( \frac{x}{\varepsilon _{h}},\nabla u+\nabla
u_{h}\right) dx$ \\ 
$\leq \int_{\Omega }f\left( \frac{x}{\varepsilon _{h}},\nabla u+\nabla
u_{h}\right) dx+\int_{\Omega ^{\prime }\setminus \Omega }\left( a\left( \frac{x}{\varepsilon _{h}}\right)%
+MB\left( \left\vert \nabla u_{h}\right\vert \right) \right) dx,\forall h\in 
\mathbb{N}
.$%
\end{tabular}%
\end{equation}%
Let $\{h_j\}\subset \mathbb N$ such that 
 $\left\{ \left[ \frac{1}{\nu \varepsilon _{h_{j}}}\right] \right\} $ is
strictly nondecresing and
\begin{equation}\label{eq2}
\underset{j\rightarrow \infty }{\lim }
\int_{\Omega }f\left( \frac{x}{\varepsilon _{h}},\nabla u+\nabla
u_{h_{j}}\right) dx=\underset{h\rightarrow \infty }{\lim \inf }\int_{\Omega
}f\left( \frac{x}{\varepsilon _{h}},\nabla u+\nabla u_{h}\right) dx<+\infty.
\end{equation}

Therefore \eqref{tch4} ensures that $\underset{j}{\sup }\int_{\Omega }B\left( \left\vert
\nabla u_{h_{j}}\right\vert \right) dx<\infty $, and, equivalently, that $\underset{j}{%
\sup }\left\Vert \nabla u_{h_{j}}\right\Vert _{B,\Omega }<\infty $  (recall that $%
\left\Vert \nabla u_{h_{j}}\right\Vert _{B,\Omega }\leq \int_{\Omega
}B\left( \left\vert \nabla u_{h_{j}}\right\vert \right) dx$ for $\left\Vert
\nabla u_{h_{j}}\right\Vert _{B,\Omega }\geq 1).$  For $\nu $ and $j$
nonnegative integers, let $\theta _{j}=\nu \varepsilon _{h_{j}}\left[ \frac{1%
}{\nu \varepsilon _{h_{j}}}\right] $ and set $v_{j}=\frac{1}{\theta _{j}}%
u_{h_{j}}\left( \theta _{j} \cdot\right) .$ Observing that $0\leq
\theta _{j}\leq 1,$ with $\underset{j\rightarrow \infty }{\lim }\theta
_{j}=1, $ and that $v_{j}\left( x\right) =0,\forall x\notin \frac{1}{\theta
_{j}}\overline{\Omega }$ and,  taking into account that, for $j$ large enough $\frac{1}{\theta _{j}}%
\overline{\Omega }\subseteq \Omega ^{\prime },$ one has that $v_{j}\in
W^1L^B_0\left( \Omega ^{\prime };%
\mathbb{R}
^{d}\right) .$ Moreover, since $u_{h}\rightarrow
0 $ in $L^{B}\left( \Omega ;%
\mathbb{R}
^{d}\right) ,$ also $v_{j}\rightarrow 0$ in $L^{B}\left( \Omega ;%
\mathbb{R}
^{d}\right), $ and $\underset{j}{\sup }\int_{\Omega
^{\prime }}B\left( \left\vert \nabla v_{_{j}}\right\vert \right) dx<\infty $, 
i.e. $\underset{j}{\sup }\left\Vert \nabla v_{_{j}}\right\Vert
_{B,\Omega ^{\prime }}<\infty .$ 
Moreover, as in \cite{ciora1}, the change of variables
 $x=\theta
_{j}x^{\prime },$for $j$ sufficiently large, ensures that

\begin{align}
\int_{\Omega }f\left( \frac{x}{\varepsilon _{h}},\nabla u+\nabla
u_{h_{j}}\right) dx=\theta _{j}^{n}\int_{\frac{1}{\theta _{j}}\Omega
^{\prime }}f\left( \frac{\theta _{j}x^{\prime }}{\varepsilon _{h_{j}}}%
,\nabla u+\nabla u_{h_{j}}\right) \left( \theta _{j}x^{\prime }\right)
dx^{\prime }\nonumber \\ 
=\theta _{j}^{n}\int_{\frac{1}{\theta _{j}}\Omega ^{\prime }}f\left( \frac{%
\theta _{j}x^{\prime }}{\varepsilon _{h_{j}}},\nabla u\left( \theta
_{j}x^{\prime }\right) +\nabla _{x^{\prime }}v_{_{j}}\left( x^{\prime
}\right) \right) dx^{\prime } \label{toquote} \\ 
\geq \theta _{j}^{n}\int_{\Omega }f\left( \frac{\theta _{j}x^{\prime }}{%
\varepsilon _{h_{j}}},\nabla u\left( \theta _{j}x^{\prime }\right) +\nabla
_{x^{\prime }}v_{_{j}}\left( x^{\prime }\right) \right) dx^{\prime }. \nonumber
\end{align}%

By \eqref{prop2.8},  it results that, for all $\xi_{1},\xi_{2}\in 
\mathbb{R}
^{dN},\varepsilon >0;$ 
\begin{align*}
f\left( \xi_{1}\right) -f\left( \xi_{2}\right) \leq 
\varepsilon \left( 1+b\left( 1+\left\vert \xi_{1}\right\vert +\left\vert
\xi_{2}\right\vert \right) \right) \frac{\left\vert \xi_{1}-\xi_{2}\right\vert }{%
\varepsilon } \\
\leq B\left( \frac{\left\vert \xi_{1}-\xi_{2}\right\vert }{\varepsilon }\right)
+\varepsilon \widetilde{B}\left( 1+b\left( 1+\left\vert \xi_{1}\right\vert
+\left\vert \xi_{2}\right\vert \right) \right)  \\ 
\leq C_{1\varepsilon }B\left( \left\vert \xi_{1}-\xi_{2}\right\vert \right)
+\varepsilon C_{2}\widetilde{B}\left( a\right) +\varepsilon C_{2}\widetilde{B%
}\left( b\left( 1+\left\vert \xi_{1}\right\vert +\left\vert \xi_{2}\right\vert
\right) \right)\\ 
\leq C_{1\varepsilon }B\left( \left\vert \xi_{1}-\xi_{2}\right\vert \right)
+\varepsilon C_{2}\widetilde{B}\left( a\right) +\varepsilon C_{3}B\left(
1+\left\vert \xi_{1}\right\vert +\left\vert \xi_{2}\right\vert \right)  \\ 
\leq C_{1\varepsilon }B\left( \left\vert \xi_{1}-\xi_{2}\right\vert \right)
+\varepsilon C_{2}\widetilde{B}\left( a\right) +\varepsilon C_{3}C_{4}\left(
B\left( 1\right) +B\left( \left\vert \xi_{1}\right\vert \right) +B\left(
\left\vert \xi_{2}\right\vert \right) \right) , 
\end{align*}
for suitable nonnegative constants $C_1,C_2,C_3$ and $C_4$, and taking into account that
in the second and forth lines it has been used the duality relation between the two convex conjugate functions $B$ and $\tilde B$, in the third and the fifth ones it has been exploited the fact that both $B$ and $\widetilde{B}$ satisfy $\Delta_2$.

That is, \eqref{toquote} can be estimated as follows
\begin{align}
\int_{\Omega ^{\prime }}f\left( \frac{x}{\varepsilon _{h}},\nabla u+\nabla
u_{h_{j}}\right) dx\geq \theta _{j}^{n}\int_{\Omega }f\left( \frac{\theta
_{j}x^{\prime }}{\varepsilon _{h_{j}}},\nabla _{x}u\left( x^{\prime }\right)
+\nabla _{x^{\prime }}v_{_{j}}\left( x^{\prime }\right) \right) dx^{\prime }\nonumber
\\ 
-\theta _{j}^{n}C_{1\varepsilon }\int_{\Omega }B\left( \left\vert \nabla
_{x}u\left( \theta _{j}x^{\prime }\right) -\nabla _{x}u\left( x^{\prime
}\right) \right\vert \right) dx-\varepsilon \int_{\Omega }C_{2}\widetilde{B}%
\left( 1\right) dx-\varepsilon C_{3}C_{4}\int_{\Omega }\left( B\left(
1\right) dx\right) \label{eq4} \\ 
-\varepsilon C_{3}C_{4}\int_{\Omega }B\left( \left\vert \nabla u+\nabla
u_{h_{j}}\right\vert \right) dx-\varepsilon C_{3}C_{4}\int_{\Omega }B\left(
\left\vert \nabla _{x}u\left( x^{\prime }\right) +\nabla _{x^{\prime
}}v_{_{j}}\left( x^{\prime }\right) \right\vert \right) dx,
\end{align} for $j$ large
enough and for $n_{j}=\left[ \frac{1}{\nu \varepsilon _{h_{j}}}\right].$ Hence taking the limit (up to a subsequence), as $j \to +\infty$ in \eqref{eq4},
in view of the arbitrariness of $\varepsilon$ and the bounds, we obtain

\begin{align}
\underset{j\rightarrow \infty }{\lim }\int_{\Omega }f\left( \nu
n_{j}x,\nabla u+\nabla v_{j}\right) dx\underset{j\rightarrow \infty }{=\lim
\inf }\int_{\Omega }f\left( \nu n_{j}x,\nabla u+\nabla v_{j}\right) dx \nonumber \\ 
=\underset{j\rightarrow \infty }{\lim \inf }\theta _{j}^{n}\int_{\Omega
}f\left( \nu n_{j}x,\nabla u+\nabla v_{j}\right) dx \label{eq5} \\ 
=\underset{j\rightarrow \infty }{\lim \inf }\theta _{j}^{n}\int_{\Omega
}f\left( \frac{\theta _{j}x}{\varepsilon _{h_{j}}}x,\nabla u+\nabla
v_{j}\right) dx \nonumber \\ 
\leq \underset{j\rightarrow \infty }{\lim }\int_{\Omega ^{\prime }}f\left( 
\frac{x}{\varepsilon _{h_{j}}}x,\nabla u+\nabla u_{h}\right) dx. \nonumber
\end{align}

Then, from \eqref{eq5}, \eqref{eq2}, \eqref{eq3}, the properties of $a$ in \eqref{tch3}, we get
\begin{align*}
\underset{j\rightarrow \infty }{\lim \inf }\int_{\Omega }f\left( \nu
n_{j}x,\nabla u+\nabla v_{j}\right) dx \\ 
\underset{j\rightarrow \infty }{\leq \lim \inf }\int_{\Omega }f\left( \frac{%
x}{\varepsilon_{h_j}}x,\nabla u+\nabla u_{h}\right) dx+\mathcal L^N( \Omega
^{\prime }\backslash \Omega) \int_Y a(x)dx
+M\int_{\Omega ^{\prime }\backslash \Omega }B\left( \left\vert \nabla
u\right\vert \right) dx. \nonumber
\end{align*}%
Set, for every $h \in \mathbb N$ 
\begin{equation*}
w_{h}=\left\{ 
\begin{tabular}{l}
$v_{j}$ if $h=n_{j}$ for some $j\in 
\mathbb{N}
$ \\ 
$0$ otherwise,
\end{tabular}%
\right.
\end{equation*}%
Thus, $\left\{ w_{h}\right\} \subseteq W^{1}L^B\left( \Omega ;%
\mathbb{R}
^{d}\right) ,$ $w_{h}\rightarrow 0$ in $L^{B}\left( \Omega ;
\mathbb{R}
^{d}\right) .$ Letting $\Omega' \ssearrow \Omega $, and taking in account that $\mathcal L^N(\partial \Omega) =0$ we get 
\begin{align*}
\inf \big\{ \underset{h\rightarrow +\infty }{\lim \inf }\int_{\Omega
}f\left( \nu hx,\nabla u\left( x\right) +\nabla v_{h}\right) dx: \left\{
v_{h}\right\} \subseteq W^{1}L^B\left( \Omega ;%
\mathbb{R}
^{Nd}\right), \\
v_{h}\rightarrow 0\text{ in }L^{B}\left( \Omega ;
\mathbb{R}
^{d}\right) \big\}  \\ 
\leq \underset{h\rightarrow +\infty }{\lim \inf }\int_{\Omega }f\left( \nu
hx,\nabla u\left( x\right) +\nabla w_{h}\right) dx \\ \leq \underset{%
j\rightarrow +\infty }{\lim \inf }\int_{\Omega }f\left( \nu n_{j}x,\nabla
u\left( x\right) +\nabla v_{j}\right) dx \\ 
\leq \underset{j\rightarrow +\infty }{\lim \inf }\int_{\Omega }f\left( \frac{x}{%
\varepsilon _{h}}x,\nabla u\left( x\right) +\nabla u_{h}\right) dx.
\end{align*}%

\noindent Finally, the conclusion follows by lemma \ref{lem1} and by taking the supremum over $\nu $.
\end{proof}

\begin{lemma}\label{lem3.6}
Let $f$ satisfy \eqref{tch1}-\eqref{tch4} and \eqref{tch3} for a given Young function $B$ satisfying $\nabla_2$ and $\Delta_2$ condition. 
Then, for every $\Omega \in \mathcal{A}_{0}$ and $u\in W^{1}L^B\left( \Omega ;\mathbb{R}^{d}\right) $, we have:
\begin{align*}
\inf \left\{ 
\underset{h\rightarrow +\infty }{\lim \inf }%
\int_{\Omega }f\left( \frac{x}{\varepsilon_{h}},\nabla u\left( x\right)
+\nabla u_{h}(x)\right) dx:\left\{ u_{h}\right\} \subseteq W^1L^B_0\left(
\Omega;\mathbb R^{d}\right),\right.\\ 
u_{h}\rightarrow 0\text{ in }L^{B}\left( \Omega ;
\mathbb{R}
^{d}\right)
\big\}  \\ 
\geq \underset{h\rightarrow +\infty }{\lim \inf }\inf \left\{ \int_{\Omega
\times Y}f\left( hy,\nabla u\left( x\right) +\nabla _{y}V\left( x,y\right)
\right) dxdy:V\in L_{D_{0y}}^{B}\left( \Omega \times Y;
\mathbb{R}
^{d}\right) \right\}.
\end{align*}
\end{lemma}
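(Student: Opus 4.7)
The strategy is to pass from the $\varepsilon_h$-scaled energy on the left to a two-scale quantity via the unfolding operator, use weak lower semicontinuity for quasiconvex Orlicz integrands, and identify the resulting lower bound with $\int_\Omega f_{\hom}(\nabla u)dx$, which is exactly what the right-hand side reduces to after a Castaing-type selection argument.

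Take any $\{u_h\}\subset W^1L^B_0(\Omega;\mathbb R^d)$ with $u_h\to 0$ in $L^B$. One may assume the liminf in the left-hand side is finite and (after extracting a subsequence) a limit. The coercivity \eqref{tch4} combined with $B\in\Delta_2\cap\nabla_2$ gives that $\nabla u_h$ is bounded in $L^B$, so $u_h\rightharpoonup 0$ in $W^1L^B$. Proposition \ref{maintwo} then provides a further subsequence and a corrector $V$ with $V(x,\cdot)\in W^1_{\#}L^B_{per}(Y;\mathbb R^d)$ such that $\mathcal T_{\varepsilon_h}(\nabla u_h)\rightharpoonup\nabla_yV$ in $L^B(\Omega\times Y;\mathbb R^{dN})$, and combining with the strong convergence $\mathcal T_{\varepsilon_h}(\nabla u)\to\nabla u$ from Theorem \ref{convBunf}(i) we obtain $\mathcal T_{\varepsilon_h}(\nabla u+\nabla u_h)\rightharpoonup \nabla u+\nabla_yV$.

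By the $Y$-periodicity of $f(\cdot,\xi)$ and Proposition \ref{Prop2}(i) one gets the unfolding lower bound
$$\int_\Omega f(x/\varepsilon_h,\nabla u+\nabla u_h)\,dx\ \geq\ \int_{\widehat\Omega_{\varepsilon_h}}f(x/\varepsilon_h,\nabla u+\nabla u_h)\,dx\ =\ \frac{1}{|Y|}\int_{\Omega\times Y}f(y,\mathcal T_{\varepsilon_h}(\nabla u+\nabla u_h))\,\chi_{\widehat\Omega_{\varepsilon_h}}(x)\,dxdy.$$
By the opening observation of Section \ref{secmain} we may assume $f(y,\cdot)$ is quasiconvex. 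Writing $\mathcal T_{\varepsilon_h}(\nabla u_h)=\nabla_y\phi_h$ with $\phi_h=\mathcal T_{\varepsilon_h}(u_h)/\varepsilon_h$ and subtracting the $y$-mean $\dashint_Y\phi_h(x,y)\,dy$ to produce $\phi_h^0$ bounded in $L^B(\Omega;W^1L^B(Y))$ (via Poincar\'e--Wirtinger in $y$ for each $x$), the Acerbi--Fusco-type weak lower semicontinuity for quasiconvex Orlicz integrands (\cite{MM,focardi 1}) applied slice-wise in $x$ together with Fatou yields
$$\liminf_h\int_\Omega f(x/\varepsilon_h,\nabla u+\nabla u_h)\,dx\ \geq\ \frac{1}{|Y|}\int_{\Omega\times Y}f(y,\nabla u(x)+\nabla_yV(x,y))\,dxdy.$$
Now, for a.e.\ $x\in\Omega$, taking the infimum over $W\in W^1L^B_{per}(Y;\mathbb R^d)$ and then extending any such $W$ periodically to $tY$ before multiplying by a cutoff $\eta_t$ equal to $1$ on $(t-1)Y$ produces a competitor in $W^1L^B_0(tY;\mathbb R^d)$ with boundary-layer error of order $O(1/t)$—controlled through the growth \eqref{tch3}, the continuity estimate \eqref{prop2.8} and the $\Delta_2\cap\nabla_2$ hypothesis, as in the proof of Proposition \ref{ar3}. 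Letting $t\to\infty$ gives the pointwise inequality $|Y|^{-1}\int_Y f(y,\nabla u(x)+\nabla_yV(x,y))\,dy\geq f_{\hom}(\nabla u(x))$, hence $\liminf_h\int_\Omega f(x/\varepsilon_h,\nabla u+\nabla u_h)\,dx\geq\int_\Omega f_{\hom}(\nabla u)\,dx$.

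Finally, the right-hand side of Lemma \ref{lem3.6} is identified with $\int_\Omega f_{\hom}(\nabla u)\,dx$: the change of variable $z=hy$ together with the Castaing selection argument of Lemma \ref{ar1} (applied on $hY$ with $t=h$) gives $\inf\{\int_{\Omega\times Y}f(hy,\nabla u+\nabla_yV)\,dxdy:V\in L^B_{D_{0y}}\}=\int_\Omega f_h(\nabla u)\,dx$, and by Lemma \ref{ar2} combined with dominated convergence (using the pointwise bound $f_h(\xi)\leq\int_Y a+B(|\xi|)$ coming from \eqref{tch3}) one has $\lim_h\int_\Omega f_h(\nabla u)\,dx=\int_\Omega f_{\hom}(\nabla u)\,dx$, so $\liminf_h=\lim_h$ delivers the claim. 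The main obstacle is the two-scale weak lower semicontinuity step: while classical in $L^p$ (Acerbi--Fusco), the Orlicz extension requires combining Morrey quasiconvexity with the $\Delta_2\cap\nabla_2$ compactness to justify passing to the weak limit of the unfolded gradients and to reconstruct a $W^{1,B}$-bounded corrector $\phi_h^0$; a secondary delicate point is estimating the thin boundary annulus in the periodic-to-zero-boundary cutoff step using \eqref{tch3} and \eqref{prop2.8} without appealing to polynomial growth.
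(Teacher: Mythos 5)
Your outer scaffolding (reduction to quasiconvex $f$ via $Qf$, the unfolding identity on $\widehat\Omega_{\varepsilon_h}$, the identification of the right-hand side with $\int_\Omega f_{\hom}(\nabla u)\,dx$ through Lemma \ref{ar1}, Lemma \ref{ar2} and dominated convergence) is consistent with the paper, but the central step of your argument is not a gap that can be filled: it is false as stated. You claim that from $\mathcal T_{\varepsilon_h}(\nabla u+\nabla u_h)\rightharpoonup \nabla u+\nabla_y V$ weakly in $L^{B}(\Omega\times Y)$ and quasiconvexity of $f(y,\cdot)$ one gets $\liminf_h\int_\Omega f(x/\varepsilon_h,\nabla u+\nabla u_h)\,dx\ \ge\ \frac{1}{|Y|}\int_{\Omega\times Y}f\big(y,\nabla u(x)+\nabla_y V(x,y)\big)\,dx\,dy$, ``by Acerbi--Fusco applied slice-wise in $x$ together with Fatou''. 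First, weak convergence in $L^{B}(\Omega\times Y)$ does not pass to a.e.\ slices $x\mapsto W_h(x,\cdot)$, so the slice-wise application of quasiconvex lower semicontinuity plus Fatou has no justification; the unfolded fields are $y$-gradients for fixed $x$ but are not gradients in $(x,y)$, and quasiconvexity yields weak lower semicontinuity only along sequences with full gradient structure (for arbitrary weakly convergent $L^B$ fields one needs convexity, which is exactly what this paper is removing). Second, the inequality itself cannot hold in general: combined with your Step 5 (which is correct: the cell energy of the periodic corrector dominates the single-cell periodic infimum, which dominates $f_{\hom}$), it would give, when applied to a recovery sequence $u_h\in \xi x+W^{1}L^B_0$ for $f_{\hom}(\xi)$ (admissible by Lemma \ref{lem1}), that $f_{\hom}(\xi)$ coincides with the one-cell periodic formula for every quasiconvex integrand with power growth. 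This contradicts M\"uller's classical counterexample and would make the multi-cell formula \eqref{fhom} — the whole point of \cite{ciora1} and of this paper — superfluous. So the two-scale/unfolded lower semicontinuity you invoke is precisely the step that fails for non-convex $f$, and Proposition \ref{maintwo} plus Theorem \ref{convBunf} cannot substitute for it.

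The paper avoids this trap by never freezing the particular corrector $V$ of the sequence. It starts from the bound of Lemma \ref{lemma2.8} (energies with integrand $f(\nu h x,\cdot)$, sup over $\nu$), covers a neighbourhood $\Omega_{1/\nu}$ of $\Omega$ by cubes $Y_{\nu,j}$ of side $1/\nu$, uses Lemma \ref{lem1} on each cube to replace the test sequence by one in $W^1L^B_0(Y_{\nu,j};\mathbb R^d)$, and applies the unfolding operator $\mathcal T_{1/\nu}$ at the \emph{fixed} scale $1/\nu$: the rescaled test functions become admissible competitors $V\in L^{B}_{D_{0y}}$ in the infimum appearing on the right-hand side, so the lower bound retains the infimum over all $V$ (the multi-cell structure) rather than the energy of one two-scale limit. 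The passage $\nu\to\infty$ is then handled by replacing $\mathcal T_{1/\nu}(\nabla u)$ with $\nabla u$ via the estimate \eqref{prop2.8}, the restriction to competitors with bounded modular, and the strong convergence $\mathcal T_{1/\nu}(\nabla u)\to\nabla u$ in $L^{B}$, with no quasiconvexity-based semicontinuity anywhere. To repair your proof you would have to follow this localisation/unfolding-of-test-functions route (or an equivalent multi-cell argument); the lower-semicontinuity shortcut you propose cannot work.
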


\begin{proof}
	To prove the lemma, we estimate from below the right-hand side of the inequality
	in Lemma \ref{lemma2.8}
	The regularity of $\Omega $ allows us to extend  $u$ and any sequence $\{v_h\}\subseteq  W^1L^B_0\left( \Omega ;
	\mathbb{R}
	^{d}\right) $ converging to $0$ from $W^{1}L^B\left( \Omega ;
\mathbb{R}
^{d}\right) $\ to $W^{1}L^B\left( 
\mathbb{R}
^{N};
\mathbb{R}
^{d}\right),$ with an abuse of notation, the extension of $\{v_h\}$ outside $\Omega$ will be still denoted by $\{v_h\}$. Let $\nu \in
\mathbb{N},
$ 
let $j\in 
\mathbb{Z}
^{n}$ and define $Y_{\nu ,j}:=\frac{1}{\nu }\left( j+Y\right) $ and $J_{\nu }:=\left\{
j\in 
\mathbb{Z}^{B}:\overline{Y}_{\nu ,j}\cap \overline{\Omega }\neq \emptyset \right\} $
then $\bigcup\limits_{j\in J_{\nu }} Y_{\nu ,j}\subseteq \Omega _{\frac{1%
}{\nu }}$ and $\Omega _{\frac{1}{\nu }}=\Omega \cup \Omega _{\frac{1}{\nu }%
}\backslash \Omega .$%
By \eqref{tch3}, it results
\begin{align*}
\sum\limits_{j\in J_{\nu }}\int_{Y_{\nu ,j}}f\left( \nu hx,\nabla u+\nabla
v_{h}\right) dx\leq \int_{\Omega _{\frac{1}{\nu }}}f\left( \nu hx,\nabla
u+\nabla v_{h}\right) dx \\ 
=\int_{\Omega }f\left( \nu hx,\nabla u+\nabla v_{h}\right) dx+\int_{\Omega
_{\frac{1}{\nu }}\backslash \Omega }f\left( \nu hx,\nabla u+\nabla
v_{h}\right) dx \\ 
\leq \int_{\Omega }f\left( \nu hx,\nabla u+\nabla v_{h}\right)
dx+\int_{\Omega _{\frac{1}{\nu }}\backslash \Omega } a\left( \nu hx\right) dx+M\int_{\Omega _{\frac{1}{\nu }}\backslash
\Omega }B\left( \left\vert \nabla u+\nabla v_{h}\right\vert \right) dx.
\end{align*}

But $v_{h}=0$ on $\Omega _{\frac{1}{\nu }}\backslash \Omega $ then $\nabla
v_{h}=0$ on $\Omega _{\frac{1}{\nu }}\backslash \Omega ;$ and we have 
\begin{align*}
\sum\limits_{j\in J_{\nu }}\int_{Y_{\nu ,j}}f\left( \nu hx,\nabla u+\nabla
v_{h}\right) dx\leq \int_{\Omega _{\frac{1}{\nu }}}f\left( \nu hx,\nabla
u+\nabla v_{h}\right) dx\\ 
=\int_{\Omega }f\left( \nu hx,\nabla u+\nabla v_{h}\right) dx+\int_{\Omega
_{\frac{1}{\nu }}\backslash \Omega }f\left( \nu hx,\nabla u+\nabla
v_{h}\right) dx \\ 
\leq \int_{\Omega }f\left( \nu hx,\nabla u+\nabla v_{h}\right)
dx+\int_{\Omega _{\frac{1}{\nu }}\backslash \Omega } a\left( \nu hx\right) dx+M\int_{\Omega _{\frac{1}{\nu }}\backslash
\Omega }B\left( \left\vert \nabla u\right\vert \right) dx.
\end{align*}%
Passing to liminf on $h$ we get \ 
\begin{align*}
\sum\limits_{j\in J_{\nu }}\underset{h\rightarrow +\infty }{\lim \inf }%
\int_{Y_{\nu ,j}}f\left( \nu hx,\nabla u+\nabla v_{h}\right) dx \\ 
\leq \underset{h\rightarrow +\infty }{\lim \inf }\int_{\Omega }f\left( \nu
hx,\nabla u+\nabla v_{h}\right) dx+{\mathcal L}^N( \Omega _{\frac{1}{\nu }%
}\backslash \Omega )\int_{Y} a
\left( y\right) dy+M\int_{\Omega _{\frac{1}{\nu }}\backslash \Omega }B\left(
\left\vert \nabla u\right\vert \right) dx
\end{align*}
Taking in account lemma \ref{lem1}, the properties of infima, 
and the fact that $Y_{\nu ,j}\in \mathcal{A}_{0}$, from
\begin{align*}
\inf \big\{ 
\underset{h\rightarrow +\infty }{\lim \inf }\int_{Y_{\nu ,j}}f\left( \nu
hx,\nabla u+\nabla v_{h}\right) dx:v_{h}\subseteq W^{1}L^B\left( Y_{\nu ,j};%
\mathbb{R}
^{d}\right),\\ 
v_{h}\rightarrow u\text{ in }L^{B}\left( Y_{\nu ,j};
\mathbb{R}
^{d}\right)
\big\}  \\ 
=\inf \big\{ 
\underset{h\rightarrow +\infty }{\lim \inf }\int_{Y_{\nu ,j}}f\left( \nu
hx,\nabla u+\nabla u_{h}\right) dx:u_{h}\subseteq u+W^1L^B_0\left( Y_{\nu
,j};
\mathbb{R}
^{d}\right), \\ 
u_{h}\rightarrow u\text{ in }L^{B}\left( Y_{\nu ,j};
\mathbb{R}
^{d}\right)
\big\}
\end{align*}
we get that there exists $\{w_{\nu ,j,h}\} \subset W^1L^B_0(Y_{\nu,j};\mathbb R^d)$ such that $w_{\nu,j,h}\to 0$ in $L^B(Y_{\nu,j};\mathbb R^d)$ as $h \to +\infty$, and $$\underset{h\rightarrow +\infty }{\lim \inf }\int_{Y_{\nu ,j}}f\left(
\nu hx,\nabla u+\nabla w_{\nu ,j,h}\right) dx\leq \underset{h\rightarrow
+\infty }{\lim \inf }\int_{Y_{\nu ,j}}f\left( \nu hx,\nabla u+\nabla
v_{h}\right) dx.$$ 

Fix $j\in J_{\nu },h\in 
\mathbb{N}$ consider  $\mathcal{T}_{\frac{1}{\nu }}$ on $Y_{\nu ,j},$ following
the same arguments as in \cite{ciora1} we get
\begin{align*}
\int_{Y_{\nu ,j}}f\left( \nu hx,\nabla u+\nabla w_{\nu ,j,h}\right) dx\geq
\int_{Y_{\nu ,j}\times Y}f\left( hy,\mathcal{T}_{\frac{1}{\nu }}\left(
\nabla u\right) +\mathcal{T}_{\frac{1}{\nu }}\left( \nabla w_{\nu
,j,h}\right) \left( x,y\right) \right) dxdy.
\end{align*}

\noindent Since $\mathcal{T}_{\frac{1}{\nu }}\left( \nabla w_{\nu ,j,h}\right) =\nu
\nabla _{y}\mathcal{T}_{\frac{1}{\nu }}\left( w_{\nu ,j,h}\right),$ and that, when $x$ varies almost everywhere in some $Y_{\nu,j}$, the function,$\mathcal{T%
}_{\frac{1}{\nu }}\left( \nabla w_{\nu ,j,h}\right)(x,\cdot)\in W^1L^B_0(Y;\mathbb R^d)$, it results that $\mathcal{T%
}_{\frac{1}{\nu }}\left( \nabla w_{\nu ,j,h}\right) \in L_{D_{0y}}^{B}\left(
Y_{\nu ,j}\times Y;\mathbb{R}
^{d}\right) $. 
Thus
\begin{align*}
\int_{Y_{\nu ,j}}f\left( \nu hx,\nabla u+\nabla w_{\nu ,j,h}\right) dx\geq 
\\ 
\inf \left\{ \int_{Y_{\nu ,j}\times Y}f\left( hy,\mathcal{T}_{\frac{1}{\nu }
}\left( \nabla u\left( x,y\right) \right) +\nabla _{y}V\left( x,y\right)
\right) dxdy:V\in L_{D_{0y}}^{B}\left( Y_{\nu ,j}\times Y;
\mathbb R^{d}\right) \right\}.
\end{align*}
Note that $\bigcup\limits_{j\in J_{\nu }} Y_{\nu ,j}\cup \mathcal{N}%
=\Omega _{\frac{1}{\nu }}$ with $\mathcal L^N(\mathcal{N})=0$, with the $Y_{\nu
,j}\subseteq \Omega _{\frac{1}{\nu }}$ pairwise disjoints, we have
\begin{align*}
\sum\limits_{j\in J_{\nu }}\inf \left\{ \int_{Y_{\nu ,j}\times Y}f\left(
hy,\mathcal{T}_{\frac{1}{\nu }}\left( \nabla u\left( x,y\right) \right)
+\nabla _{y}V\left( x,y\right) \right) dxdy:V\in L_{D_{0y}}^{B}\left( Y_{\nu
,j}\times Y;
\mathbb{R}
^{d}\right) \right\} \\ 
\geq \inf \left\{ \int_{\Omega _{\frac{1}{\nu }}\times Y}f\left( hy,%
\mathcal{T}_{\frac{1}{\nu }}\left( \nabla u\left( x,y\right) \right) +\nabla
_{y}V\left( x,y\right) \right) dxdy:V\in L_{D_{0y}}^{B}\left( \Omega \times
Y;\mathbb{R}
^{d}\right) \right\}  \\ 
\geq \inf \left\{ \int_{\Omega \times Y}f\left( hy,\mathcal{T}_{\frac{1}{\nu }%
}\left( \nabla u\left( x,y\right) \right) +\nabla _{y}V\left( x,y\right)
\right) dxdy:V\in L_{D_{0y}}^{B}\left( \Omega \times Y;
\mathbb{R}
^{d}\right) \right\} .
\end{align*}
It follows that 
\begin{align*}
\underset{h\rightarrow +\infty }{\lim \inf }\int_{\Omega }f\left( \nu
hx,\nabla u+\nabla v_{h}\right) dx+\mathcal L^N(\Omega _{\frac{1}{\nu }%
}\backslash \Omega) \int_{Y}a
\left( y\right) dy+M\int_{\Omega _{\frac{1}{\nu }}\backslash \Omega }B\left(
\left\vert \nabla u\right\vert \right) dx \\ 
\geq \sum\limits_{j\in J_{\nu }}\underset{h\rightarrow +\infty }{\lim \inf 
}\inf \left\{ \int_{Y_{\nu ,j}\times Y}f\left( hy,\mathcal{T}_{\frac{1}{\nu }%
}\left( \nabla u\left( x,y\right) \right) +\nabla _{y}V\left( x,y\right)
\right) dxdy:V\in L_{D_{0y}}^{B}\left( Y_{\nu ,j}\times Y;
\mathbb{R}
^{d}\right) \right\}  \\ 
=\underset{h\rightarrow +\infty }{\lim }\sum\limits_{j\in J_{\nu }}\inf
\left\{ \int_{Y_{\nu ,j}\times Y}f\left( hy,\mathcal{T}_{\frac{1}{\nu }%
}\left( \nabla u\left( x,y\right) \right) +\nabla _{y}V\left( x,y\right)
\right) dxdy:V\in L_{D_{0y}}^{B}\left( Y_{\nu ,j}\times Y;
\mathbb{R}
^{d}\right) \right\} \\ 
\geq \underset{h\rightarrow +\infty }{\lim }\inf \left\{ \int_{\Omega
\times Y}f\left( hy,\mathcal{T}_{\frac{1}{\nu }}\left( \nabla u\left(
x,y\right) \right) +\nabla _{y}V\left( x,y\right) \right) dxdy:V\in
L_{D_{0y}}^{B}\left( \Omega \times Y;
\mathbb{R}
^{d}\right) \right\}.
\end{align*}

Since $%
\mathcal L^N( \Omega _{\frac{1}{\nu }}\backslash \Omega) \rightarrow
0$ as $\nu \rightarrow +\infty ,$ by the absolute continuity of the Lebesgue integral we have
\begin{align*}
\liminf_{\nu \to +\infty}\underset{h\rightarrow +\infty }{\lim \inf }\int_{\Omega }f\left( \nu
	hx,\nabla u+\nabla v_{h}\right) dx \\
	\geq
 \liminf_{\nu \to +\infty}\underset{h\rightarrow +\infty }{\lim }\inf \left\{ \int_{\Omega
	\times Y}f\left( hy,\mathcal{T}_{\frac{1}{\nu }}\left( \nabla u\left(
x,y\right) \right) +\nabla _{y}V\left( x,y\right) \right) dxdy:V\in
L_{D_{0y}}^{B}\left( \Omega \times Y;
\mathbb{R}
^{d}\right) \right\}.
\end{align*}
Consequently the proof will be concluded if we show the following: 
\begin{align*}
\underset{\nu \rightarrow +\infty }{\lim \inf }\underset{h\rightarrow
+\infty }{\lim }\inf \left\{ \int_{\Omega \times Y}f\left( hy,\mathcal{T}_{%
\frac{1}{\nu }}\left( \nabla u\left( x\right) \right) +\nabla _{y}V\left(
x,y\right) \right) dxdy:V\in L_{D_{0y}}^{B}\left( \Omega \times Y;%
\mathbb{R}
^{d}\right) \right\} \\ 
\underset{h\rightarrow +\infty }{\geq \lim \inf }\inf \left\{ \int_{\Omega
\times Y}f\left( hy,\left( \nabla u\left( x\right) \right) +\nabla
_{y}V\left( x,y\right) \right) dxdy:V\in L_{D_{0y}}^{B}\left( \Omega \times
Y;
\mathbb R^{d}\right) \right\}. 
\end{align*}

Taking in account definition of infimum, and properties of
unfolding we get the existence of a constat $k\left( u\right) $ (depending only on 
$u$) such that:
\begin{align*}
\inf \left\{ \int_{\Omega \times Y}f\left( hy,\mathcal{T}_{\frac{1}{\nu }%
}\left( \nabla u\left( x\right) \right) +\nabla _{y}V\left( x,y\right)
\right) dxdy:V\in L_{D_{0y}}^{B}\left( \Omega \times Y;
\mathbb{R}
^{d}\right) \right\}  \\ 
= \inf \big\{ 
\int_{\Omega \times Y}f\left( hy,\mathcal{T}_{\frac{1}{\nu }}\left( \nabla
u\left( x\right) \right) +\nabla _{y}V\left( x,y\right) \right) dxdy: \\ 
V\in L_{D_{0y}}^{B}\left( \Omega \times Y;
\mathbb{R}
^{d}\right) ,\int_{\Omega \times Y}B\left( \left\vert \mathcal{T}_{\frac{1}{
\nu }}\left( \nabla u\right) +\nabla _{y}V\right\vert \right) dxdy\leq
k\left( u\right)
\big\}.
\end{align*}
Note that $\left\Vert \mathcal{T}_{\frac{1}{\nu }}\left( \nabla u\left(
x\right) \right) +\nabla _{y}V\left( x,y\right) \right\Vert _{L^{B}\left(
\Omega \times Y;%
\mathbb{R}
^{d}\right) }\leq \int_{\Omega \times Y}B\left( \left\vert \mathcal{T}_{%
\frac{1}{\nu }}\left( \nabla u\right) +\nabla _{y}V\right\vert \right) dxdy$%
\ whenever $\left\Vert \mathcal{T}_{\frac{1}{\nu }}\left( \nabla u\left(
x\right) \right) +\nabla _{y}V\left( x,y\right) \right\Vert _{L^{B}\left(
\Omega \times Y;
\mathbb{R}
^{d}\right) }\geq 1.$
For $\ V\in L_{D_{0y}}^{B}\left( \Omega \times Y;
\mathbb{R}
^{d}\right) ,$ with $\int_{\Omega \times Y}B\left( \left\vert \mathcal{T}_{%
\frac{1}{\nu }}\left( \nabla u\right) +\nabla _{y}V\right\vert \right) dxdy$ 
$\leq k\left( u\right) $ we have, making the same type of estimates as in \eqref{eq4},: 
\begin{align*}
\int_{\Omega \times Y}f\left( hy,\mathcal{T}_{\frac{1}{\nu }}\left( \nabla
u\left( x\right) \right) +\nabla _{y}V\left( x,y\right) \right) dxdy\\ 
\geq \int_{\Omega \times Y}f\left( hy,\nabla u\left( x\right) +\nabla
_{y}V\left( x,y\right) \right) dxdy-C_{1\varepsilon }\int_{\Omega }B\left(
\left\vert \mathcal{T}_{\frac{1}{\nu }}\left( \nabla u\right) -\nabla
u\right\vert \right) dx \\ 
-\varepsilon \int_{\Omega }C_{2}\widetilde{B}\left( 1\right) dx-\varepsilon
C_{3}C_{4}\int_{\Omega }\left( B\left( 1\right) dx\right)  \\ 
-\varepsilon C_{3}C_{4}\int_{\Omega }B\left( \left\vert \mathcal{T}_{\frac{1%
}{\nu }}\left( \nabla u\left( x\right) \right) +\nabla _{y}V\left(
x,y\right) \right\vert \right) dx \\ 
-\varepsilon C_{3}C_{4}\int_{\Omega }B\left( \left\vert \nabla u\left(
x\right) +\nabla _{y}V\left( x,y\right) \right\vert \right) dx\\ 
\geq \int_{\Omega \times Y}f\left( hy,\nabla u\left( x\right) +\nabla
_{y}V\left( x,y\right) \right) dxdy-C_{1\varepsilon }\int_{\Omega }B\left(
\left\vert \mathcal{T}_{\frac{1}{\nu }}\left( \nabla u\right) -\nabla
u\right\vert \right) dx \\ 
-\varepsilon C_{3}C_{4}k\left( u\right) -\varepsilon C_{3}C_{4}\int_{\Omega
}B\left( \left\vert \nabla u\left( x\right) +\nabla _{y}V\left( x,y\right)
\right\vert \right) dx\end{align*}%
Therefore for $\nu ,h\in 
\mathbb{N}$ we get 
\begin{align*}
\inf \big\{ 
\int_{\Omega \times Y}f\left( hy,\mathcal{T}_{\frac{1}{\nu }}\left( \nabla
u\left( x\right) \right) +\nabla _{y}V\left( x,y\right) \right) dxdy: 
V\in L_{D_{0y}}^{B}\left( \Omega \times Y;\mathbb{R}
^{d}\right)\big\}  \\ 
\geq \inf \left\{\int_{\Omega \times Y}f\left(hy,\nabla u\left( x\right)
+\nabla_{y}V\left( x,y\right) \right) dxdy:V\in L_{D_{0y}}^{B}\left( \Omega
\times Y;
\mathbb{R}
^{d}\right) \right\}  \\ 
-C_{1\varepsilon }\int_{\Omega}B\left(\left\vert \mathcal{T}_{\frac{1}{
\nu }}\left( \nabla u\right) -\nabla u\right\vert \right) dx 
-\varepsilon \int_{\Omega }C_{2}\widetilde{B}\left( 1\right) dx-\varepsilon
C_{3}C_{4}\int_{\Omega }B\left( 1\right) dx \\ 
-\varepsilon C_{3}C_{4}\int_{\Omega }B\left( \left\vert \mathcal{T}_{\frac{1%
}{\nu }}\left( \nabla u\left( x\right) \right) +\nabla _{y}V\left(
x,y\right) \right\vert \right) dx \\ 
-\varepsilon C_{3}C_{4}\int_{\Omega }B\left( \left\vert \nabla u\left(
x\right) +\nabla _{y}V\left( x,y\right) \right\vert \right) dx \\ 
\geq \inf \left\{ \int_{\Omega \times Y}f\left( hy,\nabla u\left( x\right)
+\nabla _{y}V\left( x,y\right) \right) dxdy:V\in L_{D_{0y}}^{B}\left( \Omega
\times Y;
\mathbb{R}^{d}\right) \right\}  \\
\geq -C_{1\varepsilon }\int_{\Omega }B\left( \left\vert \mathcal{T}_{\frac{1
}{\nu}}\left(\nabla u\right) -\nabla u\right\vert \right) dx-\varepsilon
C_{3}C_{4}k\left( u\right) -\varepsilon C_{3}C_{4}C_{5}k\left( u\right) \\ 
-\varepsilon C_{3}C_{4}\int_{\Omega }B\left( \left\vert \nabla u\left(
x\right) -\mathcal{T}_{\frac{1}{\nu }}\left( \nabla u\right) \right\vert
\right) dx.  
\end{align*}
Taking the limit first on $\nu \to +\infty$ and as $\varepsilon \to 0$, and recalling \eqref{eqtoest}, the proof is concluded.
\end{proof}
\begin{proof}[Proof of Theorem \ref{main1}]
It is a consequence of lemmas \ref{lem1}, \ref{lem3.2}, \ref{lem3.6}, and propositions \ref{prop2.8} and \ref{ar3}.
\end{proof}

\textbf{Acknowledgements.}  Fotso Tachago is grateful to  Department of Basic and Applied Science for Engineering of Sapienza - University of Rome for its kind hospitality, during the preparation of this work. He also acknowledges the support received by International Mathematical Union, through IMU grant 2024.  
E.~Zappale acknowledges the support of the project
``Mathematical Modelling of Heterogeneous Systems (MMHS)",
financed by the European Union - Next Generation EU,
CUP B53D23009360006, Project Code 2022MKB7MM, PNRR M4.C2.1.1.
 She is a member of the Gruppo Nazionale per l'Analisi Matematica, la Probabilit\`a e le loro Applicazioni (GNAMPA) of the Istituto Nazionale di Alta Matematica ``F.~Severi'' (INdAM), and  also acknowledges partial funding from the GNAMPA Project 2023 \emph{Prospettive nelle scienze dei materiali: modelli variazionali, analisi asintotica e omogeneizzazione}.

\bigskip

\smallskip

\end{document}